\renewcommand{\ALG@name}{Table}
\pgfplotsset{width=10cm}
\tikzset{declare function={gamma(\x)=sqrt(2*pi)*\x^(\x-0.5)*exp(-\x)*exp(1/(12*\x));}}
\tikzset{declare function={tpdf(\x,\nu)=gamma(0.5*(\nu+1))/(sqrt(pi*\nu)*gamma(\nu/2))*(1+\x^2/\nu)^(-(\nu+1)/2);}}
\tikzset{declare function={invgampdf(\x,\a,\b)=(\b/\x)^\a/\x/gamma(\a)*exp(-\b/\x);}}
\newcommand{\nhphantom}[1]{\ifmmode\settowidth{\dimen0}{$#1$}\else\settowidth{\dimen0}{#1}\fi\hspace*{-\dimen0}}
\tikzset{
	hatch distance/.store in=\hatchdistance,
	hatch distance=5pt,
	hatch thickness/.store in=\hatchthickness,
	hatch thickness=0.5pt,
}
\definecolor{pink}{rgb}{0.9, 0.17, 0.31}
\def\C {\,|\:}
\newcommand\E{\mathbb E}
\renewcommand\d{\mathrm d}
\newcommand\iid {\overset{\mathrm{iid}}{\sim}}
\renewcommand\P{\mathbb P}
\newcommand\R{\mathbb R}
\renewcommand\b{\bm{\beta}}
\newcommand\e{\mathrm e}
\newcommand{\wt}[1]{\widetilde{#1}}
\newtheorem{lemma}{Lemma}
\newtheorem{theorem}{Theorem}
\newtheorem{remark}{Remark}
\renewcommand{\nhphantom}[1]{\ifmmode\settowidth{\dimen0}{$#1$}\else\settowidth{\dimen0}{#1}\fi\hspace*{-\dimen0}}
\numberwithin{equation}{section}
\newtheorem{definition}[theorem]{Definition}
\newtheorem{ass}[remark]{Assumption}
\crefname{thm}{Theorem}{Theorems}
\crefname{prop}{Proposition}{Propositions}
\crefname{lem}{Lemma}{Lemmas}
\crefname{coro}{Corollary}{Corollaries}
\crefname{add}{Addendum}{Addendums}
\crefname{asm}{Assumption}{Assumptions}
\crefname{alg}{Algorithm}{Algorithms}
\crefname{proc}{Procedure}{Procedures}
\crefname{exe}{Exercise}{Exercises}
\crefname{exa}{Example}{Examples}
\crefname{prob}{Problem}{Problems}
\crefname{section}{Section}{Sections}
\crefname{subsection}{Section}{Sections}
\crefname{appendix}{Appendix}{Appendices}
\begin{document}

\doparttoc 
\faketableofcontents 


\def\spacingset#1{\renewcommand{\baselinestretch}%
{#1}\small\normalsize} \spacingset{1}


	\title{\sf  Adaptive Bayesian Predictive Inference for Sparse High-dimensional Regression}
	\author{Veronika Ro\v{c}kov\'{a}\footnote{Veronika Ro\v{c}kov\'{a} is   professor of Econometrics and Statistics at the Booth School of Business of the University of Chicago. The author gratefully acknowledges support from the James S. Kemper Faculty Fund and the National Science  Foundation (DMS:1944740). The author would like to thank Johannes Schmidt-Hieber and Edward George for useful conversations and encouraging remarks.}
			} 
	\maketitle

\bigskip
\begin{abstract}
Bayesian predictive inference provides a coherent description of  entire predictive uncertainty through predictive distributions.
We examine several widely used sparsity priors from the predictive (as opposed to estimation) inference viewpoint.  To start, we investigate predictive distributions in the context of a high-dimensional Gaussian observation with a known variance but an  unknown sparse mean   under the Kullback-Leibler loss. First, we show that LASSO (Laplace) priors  are incapable of achieving  rate-optimal predictive distributions.
However, deploying the Laplace prior inside the Spike-and-Slab framework (e.g. with the Spike-and-Slab LASSO prior),  rate-minimax performance  can be attained with properly tuned parameters (depending on the sparsity level $s_n$).  We highlight the discrepancy between  prior calibration for the purpose of prediction  and estimation.  
 Going further, we investigate popular hierarchical priors which are known to attain {\em adaptive} rate-minimax performance for estimation. 
  Whether or not they are rate-minimax also for predictive inference has, until now,  been unclear. We answer affirmatively by showing that hierarchical Spike-and-Slab priors are adaptive and   attain the minimax rate {\em without} the knowledge of $s_n$. This is the first rate-adaptive result in the literature on predictive density estimation in sparse setups. Building on the sparse normal-means model, we extend our adaptive rate findings to the case of sparse high-dimensional regression with Spike-and-Slab priors. All of these results underscore benefits of fully Bayesian predictive inference.

\end{abstract}

\noindent%
{\bf Keywords:} {\em Asymptotic Minimaxity, Kullback-Leibler Loss, Predictive Densities, Sparse Normal Means}
\vfill

\newpage
\spacingset{1.45} 

\section{Introduction}
A fundamental goal in predictive inference is using observed data to estimate an {\em entire} predictive distribution of a future observation.
The  Bayesian approach offers a complete description of  predictive uncertainty   through posterior predictive distributions. 
Distributional predictions, as opposed to mere point predictions, are a valuable decision-making device for  practitioners.
 
This paper focuses on the problem of predicting a Gaussian vector whose mean depends on an unknown sparse high-dimensional parameter. First, we investigate predictive inference for the quintessential sparse normal-means model. Later, we extend our findings to  high-dimensional sparse linear regression which is widely used in practice.  
There is a wealth of  literature on  {\em estimation theory} in terms of rate-minimax posterior concentration (under the $\ell_2$ loss) for various priors, both for the $\ell_0$-sparse Gaussian sequence model 
 (point-mass Spike-and-Slab priors \cite{castillo_vdv},   Spike-and-Slab LASSO priors \cite{rockova} or continuous shrinkage priors \cite{dirichlet_laplace,van_der_pas,carvalho})   and sparse high-dimensional regression \cite{castillo_vdv2,RG14b}. Lower bound results have also been established \cite{castillo_vdv2},  showing that the Laplace prior \cite{casella} that   yields a rate-optimal posterior mode in sparse regression, at the same time, yields  a rate-suboptimal posterior distribution.  Rate-minimaxity  has become a useful criterion for prior calibration in order to acquire a frequentist license for   Bayesian estimation procedures. This work focuses on rate-minimax certification in the context of Bayesian {\em predictive inference}.
 While there are  decision-theoretic parallels between predictive density estimation under the Kullback-Leibler (KL) loss and point estimation under the quadratic loss   \cite{george_liang_xiu,george_liang_xu2,muk1},  the prediction problem is intrinsically different and may require different prior calibrations. This invites the question: ``{\em How to calibrate priors in order to obtain rate-minimax optimal predictive density estimates?}". To answer this question, this paper examines  popular shrinkage priors but from a predictive point of view. We build on   \cite{muk1} and \cite{muk2} but our work is different in at least three fundamental ways. First,  we focus on (a) popular priors that are widely used in practice (such as the Bayesian LASSO \cite{casella} and  the Spike-and-Slab LASSO  \cite{rockova}),  and (b) we study {\em adaptation} to sparsity levels through hierarchical priors (none of the proposed estimators  in \cite{muk1} and \cite{muk2} are adaptive). Lastly, (c) in addition to the Gaussian sequence model, we study the high-dimensional sparse regression model which is indispensable to practitioners. 

\citet{muk1}  were first to study predictive density estimation for a multivariate normal vector with $\ell_0$ sparse means (with up to $s_n$ signals) and found several fascinating parallels between sparse normal means estimation and predictive inference under the KL loss. In addition, \cite{muk1} constructed a predictive density estimator (inspired by hard thresholding) by pasting together two predictive density estimators (under the uniform prior and under a discrete ``cluster" prior) depending on the magnitude of the observed data.  While  minimax optimal (up to a constant), this estimator is not entirely Bayesian since it is not a posterior predictive distribution under any prior. This estimator also relies on discrete priors (which may not be as natural to implement in practice) and is not smooth  with respect to the input data.  Finally, these results are non-adaptive, i.e. the knowledge of sparsity level $s_n$ is required to construct the optimal estimator. In a followup paper, \cite{muk2} proposed proper Bayesian predictive densities under discrete Spike-and-Slab priors (i.e. sparse univariate symmetric  priors with {\em discrete} slab distributions) and showed that they are minimax-rate optimal. Again, these results are unfortunately non-adaptive, where the knowledge of $s_n$ is required to tune/construct the prior. Moreover, \cite{muk2} again mainly focused on discrete slab priors which may not be as practical. Uniform continuous slab distributions were shown to be minimax-rate optimal if the parameter space is suitably constrained. All these existing results have been obtained only under the Gaussian sequence model.

Our work  provides new insights into predictive performance of shrinkage priors that are widely used in practice. Our goal is providing guidelines for calibrating popular priors in the context of prediction. 
Our contributions can be summarized as follows: (1) we study Bayesian LASSO priors and show that the predictive distributions are incapable of achieving rate-minimax performance, (2) we study Laplace-induced Spike-and-Slab priors (including the popular Spike-and-Slab LASSO prior \cite{rockova}) which have a {\em continuous} slab (versus a discrete slab considered in \cite{muk1}) and show that, if   calibrated by an oracle,  predictive densities are rate-minimax, (3) we investigate hierarchical variants of the Spike-and-Slab prior and show {\em adaptive} rate-minimaxity. In conclusion, no knowledge of $s_n$ is required for hierarchical Spike-and-Slab priors to be rate-minimax optimal. This self-adaptation property is new but is in line with previous findings in the context of estimation \cite{castillo_vdv, rockova}. Results (1)-(3) are obtained under the sparse Gaussian sequence model. Our final contribution is obtaining adaptive rate results in the context of high-dimensional sparse regression under Spike-and-Slab priors. Minimax predictive inference in low-dimensional regression was previously studied by \cite{george_xu}. We investigate the high-dimensional regime where $p>n$ and where the sparsity level $s_n$ is allowed to increase  with the sample size $n$. Focusing on both the total variation distance as well as the (typical) KL divergence, we establish a rate of estimation of  Spike-and-Slab  predictive distributions which is adaptive in $s_n$ and which mirrors the $\ell_2$ estimation rate. Our proof relies on some techniques developed in \cite{castillo_vdv2} but has required several new steps including a novel upper bound on the marginal likelihood under a Laplace prior.

The paper has the following structure. Section \ref{sec:normal_means} is dedicated to the sparse normal means model. Section \ref{sec:BLASSO} presents a lower-bound result showing that Bayesian LASSO is incapable of yielding posterior predictive densities with good properties in sparse setups.  Section  \ref{sec:separable} focuses on Spike-and-Slab priors with a Dirac spike and a Laplace slab (a direct extension of the Bayesian LASSO prior) as well as the Spike-and-Slab LASSO prior \cite{rockova}.   Section \ref{sec:hierarchical} then shows adaptive rate-minimax performance of hierarchical Spike-and-Slab priors. Section \ref{sec:sparse_regression} is dedicated to the sparse high-dimensional regression model. We conclude with a discussion in Section \ref{sec:discuss} and  the proof of Theorem \ref{thm:dirac_laplace} in Section   \ref{sec:proof_thm:dirac_laplace}.

{\bf Notation}
We define $a\lesssim b$ and $a\gtrsim b$ if, for some universal constant $C$, $a\leq C b$ and  $a\geq C b$, respectively. We write $a\sim b$ when $a\lesssim b$ and $a\gtrsim b$. We denote with $\phi(\cdot)$ and $\Phi(\cdot)$ the density and cdf of a standard normal distribution.  The Gaussian Mills ratio will be denoted by $R(x)=[1-\Phi(x)]/\phi(x)$. We will denote with $\E$ expectation with respect to a data generating process and with $E$ expectation over a latent variable.

\section{Predictive Inference in the Sparse Normal Means Model}\label{sec:normal_means}
Within the context of the Gaussian sequence model, we aim to predict   $\wt Y\sim N_n(\theta,r\times I)$ from an independent observation $Y\sim N_n(\theta,I)$ as $n\rightarrow\infty$, where the true underlying parameter $\theta$ is sparse in the sense that
$\theta\in\Theta_n(s_n)$ where $\Theta_n(s_n)=\{\theta\in\R^n:\|\theta\|_0\leq s_n\}$. We study the problem of  obtaining an entire {\em predictive density} $\hat p(\wt Y\C Y)$ for $\wt Y$ that is close to $\pi(\wt Y\C  \theta)$ in terms of the Kullback-Leibler   loss
\begin{equation}
L(\theta,\hat p(\cdot\C Y))=\int \pi( \wt Y\C\theta)\log\frac{\pi( \wt Y\C \theta)}{\hat p( \wt Y\C Y)}\d  \wt Y\label{eq:KL_loss},
\end{equation}
assuming that $r\in (0,\infty)$ is known. A more classical version of this problem (without sparsity) was examined in the foundational paper  \cite{george_liang_xu2}. This paper  studied predictive distributions and assessed  the quality of the density estimator $\hat p(\cdot\C Y)$  by its risk
$$
\rho_n(\theta,\hat p)=\int \pi( Y\C\theta)L(\theta,\hat p(\cdot\C  Y))\d  Y.
$$
For any prior distribution $\pi(\cdot)$, the average (Bayes) risk $r(\pi,\hat p)=\int \rho_n(\theta,\hat p)\pi(\theta)\d\theta$ is known to be minimized by the Bayes (posterior) predictive density
\begin{equation}
\hat p ( \wt Y\C Y)=\int \pi( \wt Y\C \theta)\pi(\theta\C Y)\d \theta.\label{eq:bpd}
\end{equation}
We review some perhaps  known, yet interesting, facts about the subtleties of predictive inference. See \cite{george_liang_xiu} for a complete compendium on knowledge in low-dimensional (non-sparse) situations.
A tempting, but deceiving, strategy is to use a plug-in estimator (e.g. the maximum likelihood estimator $\hat{\theta}_{MLE}$)  to obtain a predictive density estimate $\pi( \wt Y\C\hat{\theta}_{MLE})$. This malpractice was denounced by \citet{aitchison} who  showed that $\hat p_U( \wt Y\C  Y)$ defined as \eqref{eq:bpd} under the uniform prior dominates the plug-in predictive density $\pi( \wt Y\C \hat{ \theta}_{MLE})$. 
 When $p=1$, $\hat p_U( \wt Y\C  Y)$ is admissible  under KL loss \cite{liang_barron} but when $p\geq 3$, $\hat p_U( \wt Y\C  Y)$ is inadmissible and dominated by Bayesian predictive density under the harmonic prior \cite{komaki}.  \citet{george_liang_xu2} established general sufficient conditions under which a Bayes predictive density will be minimax and will dominate $\hat p_U( \wt Y\C  Y)$.  \citet{george_xu} extended some of these results to a regression setup (known variance, fixed dimensionality). 
These developments testify that the Bayesian approach through integration (as opposed to a plug-in approach) is a far more suitable predictive framework. Our work focuses on {\em high-dimensional} scenarios where $n\rightarrow\infty$ and when $\theta$ is sparse, focusing on {\em rate-minimaxity}.

In their pathbreaking paper, \citet{muk1}  were the first to study predictive density estimation for a multivariate normal vector with $\ell_0$ sparse means. These authors quantified the minimax risk under the KL loss which equals the minimax risk of estimating sparse normal means up to a constant which depends on the ratio of variances of future and observed data, i.e. with $s_n/n\rightarrow 0$
$$
\inf_{\hat p}\sup_{\theta\in\Theta_n(s_n)}\rho_n(\theta,\hat p)\sim\frac{1}{1+r}s_n\log(n/s_n)
$$
where $r=r/1$ is the variance ratio (future over observed data) and where the minimum is taken over all predictive density estimators. We will be targeting this minimax rate using popular priors.

\subsection{The Calibration Conflict of Bayesian LASSO}\label{sec:BLASSO}
The LASSO method \cite{tibshirani_lasso} is a staple in sparse signal recovery. 
The LASSO estimator is, in fact, Bayesian \cite{casella} as it corresponds to the posterior mode under the Laplace prior 
  \begin{equation}\label{eq:lasso_prior}
\pi(\theta\C\lambda) =\prod_{i=1}^n\pi_1(\theta_i\C\lambda)\quad\text{where}\quad \pi_1(\theta\C\lambda) =\frac{\lambda}{2}\e^{-\lambda|\theta|}\quad\text{for some}\quad \lambda>0.
\end{equation}
In our sparse normal-means setting, the LASSO estimator is known to attain the (near) minimax
rate $s_n\log n$  for the square Euclidean loss if the regularity parameter $\lambda$ is chosen of the order $\sqrt{2\log n}$.  Since the LASSO estimator is a posterior mode under the Laplace prior, it is tempting to utilize the entire posterior distribution as an inferential object.  This inclination was soon corrected  by \citet{castillo_vdv2} who showed that the entire posterior distribution (known as the Bayesian LASSO posterior) for such $\lambda$ puts no mass on balls centered around the sparse truth whose radius is of much larger order than the minimax rate. The discrepancy between the performance of a posterior mode and the entire posterior distribution is a revealing cautionary tale.  Since the posterior predictive distribution is a functional of the posterior distribution, we should be skeptical about Bayesian LASSO in the context of prediction inference as well.

For the  Bayesian LASSO  independent product prior \eqref{eq:lasso_prior} \cite{hans_biometrika,casella}, the Bayesian predictive density has a product form
$
\hat p( \wt Y\C  Y)=\prod_{i=1}^n\hat p( \wt Y_i\C  Y_i)
$
and
$$
L( \theta,\hat p(\cdot\C  Y))=\sum_{i=1}^n \int \pi( \wt Y_i\C\theta_{i})\log\frac{\pi( \wt Y_i\C \theta_{i})}{\hat p( \wt Y_i\C  Y_i)}\d y= 
\sum_{i=1}^n L(\theta_{i},\hat p(\cdot\C  Y_i)).
$$
The predictive risk of a product rule over the $\Theta_n(s_n)$ is additive and satisfies \cite{muk1}
\begin{equation}
(n-s_n)\rho(0,\hat p)< \rho_n(\theta,\hat p)=\sum_{i=1}^n\rho(\theta_{i},\hat p)\leq (n-s_n)\rho(0,\hat p)+s_n\sup_{\theta\in \R}\rho(\theta,\hat p),\label{eq:basic_inequality}
\end{equation}
where (for any $1\leq i\leq n$) and $\theta\in\R$
$$
\rho(\theta,\hat p)=\int \pi(Y_i\C\theta)\int \pi(\wt Y_i\C\theta )\log [\pi(\wt Y_i\C\theta)/\hat p(\wt Y_i\C Y)]\d \wt Y_i\d Y_i
$$
is the univariate risk. The following Lemma precisely characterizes the univariate risk $\rho(\theta,\hat p)$ for $\theta\in\R$ under the prior \eqref{eq:lasso_prior}. This Lemma, in fact, applies to {\em any} 
prior $\pi_1(\cdot)$.
\begin{lemma}\label{lemma:lasso_risk_decompose}
The univariate prediction risk under the Bayesian LASSO prior \eqref{eq:lasso_prior} satisfies
\begin{equation}
\rho(\theta,\hat p)=\theta^2/(2r)+ E \log N_{\theta,1}^{LASSO}(Z)-E \log N_{\theta,v}^{LASSO}(Z)\label{eq:risk_lasso}
\end{equation}
where $v=1/(1+1/r)$ and
$
N_{\theta,v}^{LASSO}(Z)=\int \exp \left\{\mu\left[Z/\sqrt{v}+\theta/v\right]-\frac{\mu^2}{2v}\right\}\pi_1(\mu\C\lambda)\d\mu
$
and where the expectation is taken over $Z\sim N(0,1)$.
\end{lemma}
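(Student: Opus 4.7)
\textbf{Proof plan for Lemma \ref{lemma:lasso_risk_decompose}.} The plan is to decompose the univariate KL loss into Gaussian entropy terms plus log-marginal integrals, and then use the fact that the posterior predictive density under any $\pi_1$ can be written as $\phi_r(\wt Y)$ times a ratio of two integrals that are exactly of the $N_{\theta,v}^{LASSO}$-form once the right Gaussian reparametrizations are applied. First I would write
\[
\rho(\theta,\hat p) \;=\; E_{Y}E_{\wt Y\mid\theta}\bigl[\log\pi(\wt Y\C\theta) - \log\hat p(\wt Y\C Y)\bigr],
\]
and expand $\hat p(\wt Y\C Y) = \int \phi_r(\wt Y-\mu)\pi_1(\mu)\phi(Y-\mu)\d\mu\big/\int \phi(Y-\mu)\pi_1(\mu)\d\mu$, where $\phi_r$ is the $N(0,r)$ density. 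The numerator and denominator exponents are quadratic in $\mu$, so completing the square in $\mu$ and pulling the $\mu$-free factors out of the integrals, using $1/r+1=1/v$ with $v=r/(r+1)$, gives
\[
\hat p(\wt Y\C Y) \;=\; \phi_r(\wt Y)\cdot\frac{\int \exp\!\bigl\{\mu(Y+\wt Y/r)-\mu^2/(2v)\bigr\}\pi_1(\mu)\d\mu}{\int \exp\!\bigl\{\mu Y-\mu^2/2\bigr\}\pi_1(\mu)\d\mu}.
\]

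Next, I would subtract $\log\hat p(\wt Y\C Y)$ from $\log\pi(\wt Y\C\theta)$. The purely Gaussian parts collapse to $[\wt Y^2-(\wt Y-\theta)^2]/(2r) = (2\wt Y\theta-\theta^2)/(2r)$, whose expectation over $\wt Y\sim N(\theta,r)$ is exactly $\theta^2/(2r)$, producing the first term in \eqref{eq:risk_lasso}. The remaining work is to identify the two logarithmic terms as expectations of $\log N_{\theta,v}^{LASSO}(Z)$ and $\log N_{\theta,1}^{LASSO}(Z)$ for $Z\sim N(0,1)$. For the denominator, since $Y=\theta+Z_1$ with $Z_1\sim N(0,1)$, we have $\mu Y-\mu^2/2 = \mu[Z_1+\theta]-\mu^2/2$, which matches the definition of $N_{\theta,1}^{LASSO}(Z_1)$ (with $v=1$).

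The main (minor) obstacle is the reparametrization for the numerator. Here I would observe that under $Y\sim N(\theta,1)$, $\wt Y\sim N(\theta,r)$ independent, the linear combination $Y+\wt Y/r$ has mean $\theta(1+1/r)=\theta/v$ and variance $1+1/r=1/v$; hence
\[
Y+\wt Y/r \;\stackrel{d}{=}\; \theta/v + Z^*/\sqrt{v}, \qquad Z^*\sim N(0,1).
\]
Substituting this into the numerator integral yields $N_{\theta,v}^{LASSO}(Z^*)$ verbatim, and integrating against the joint law of $(Y,\wt Y)$ collapses to an expectation over a single standard normal. Combining the three pieces gives \eqref{eq:risk_lasso}. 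The argument never uses the specific Laplace form of $\pi_1$, so the identity holds for any univariate symmetric prior, as remarked before the lemma.
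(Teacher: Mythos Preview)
Your proposal is correct and follows essentially the same route as the paper: write $\hat p(\wt Y\C Y)$ as $\phi_r(\wt Y)$ times a ratio of two exponential integrals, note that the Gaussian entropy part averages to $\theta^2/(2r)$, and then use the distributional identity $Y+\wt Y/r\stackrel{d}{=}\theta/v+Z/\sqrt{v}$ to recognize the numerator and denominator as $N_{\theta,v}^{LASSO}$ and $N_{\theta,1}^{LASSO}$ respectively. Your closing observation that the Laplace form of $\pi_1$ is never used also matches the paper's remark preceding the lemma.
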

\proof Section \ref{sec:lemma:lasso_risk_decompose}.

\smallskip
The inability of the entire posterior to concentrate around the truth at an optimal rate \cite{castillo_vdv2} stems from a tuning conflict. For noise coordinates $\theta_i=0$,   $\lambda$ needs to be large in order to push the coefficient to zero and for signals $\theta_i\neq 0$,  $\lambda$ needs to be small to avoid squashing large effects. We expect a similar conundrum for the prediction problem. The inequality \eqref{eq:basic_inequality} shows that,
in order for  Bayesian LASSO prediction distributions to be rate-minimax optimal, we would need 
\begin{equation}
\sup_{\theta\in\R}\rho(\theta,\hat p)\lesssim \frac{\log(n/s_n)}{1+r}\quad\text{and, at the same time,}\quad \rho(0,\hat p)\lesssim 
 \frac{s_n\log(n/s_n)}{(n-s_n)(1+r)}.\label{eq:risks}
\end{equation}
It will follow from  Theorem \ref{lemma:risk_lasso_upper} and Theorem \ref{corollary:blasso} that these two goals are simultaneously unattainable with the same $\lambda$.
\begin{theorem}\label{lemma:risk_lasso_upper}
For $v=1/(1+1/r)$ and the Laplace prior  $\pi_1(\theta\C\lambda)$  with $\lambda>0$ we obtain 
$$
\rho(0,\hat p)\leq    \log \left(1+\frac{\sqrt{2}}{\lambda\sqrt{\pi v}}\right)+ \frac{4}{\lambda^2 v}\quad\text{and}\quad 
\sup_{\theta\neq 0}\rho(\theta,\hat p)\leq \log\left(\sqrt{\frac{32\lambda^2\pi}{v}} \right)+\frac{\lambda^2}{2}+\lambda\sqrt{\frac{2}{\pi}}+\frac{4}{\lambda^2}.
$$
\end{theorem}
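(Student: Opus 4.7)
The starting point is Lemma~\ref{lemma:lasso_risk_decompose}. A change of variables $\mu\mapsto\mu/\sqrt v$ inside $N_{\theta,v}^{LASSO}(Z)$ shows that it depends on $(\theta,Z)$ only through $Z+\theta/\sqrt v$; writing $M_v(x)=\int \e^{\mu x/\sqrt v-\mu^2/(2v)}\pi_1(\mu\C\lambda)\d\mu$ the risk reads
$$
\rho(\theta,\hat p)=\theta^2/(2r)+E\log M_1(Z+\theta)-E\log M_v(Z+\theta/\sqrt v).
$$
Splitting the Laplace prior at $\mu=0$ and completing the square in each half gives the closed form
$$
M_v(x)=\tfrac{\lambda\sqrt{2\pi v}}{2}\bigl[\e^{(x-\sqrt v\lambda)^2/2}\Phi(x-\sqrt v\lambda)+\e^{(x+\sqrt v\lambda)^2/2}(1-\Phi(x+\sqrt v\lambda))\bigr].
$$
Three properties of $M_v$ will drive the argument: (a) $M_v(0)=\lambda\sqrt v\,R(\sqrt v\lambda)$ via the Mills ratio; (b) $M_v$ is even and convex, since $\partial_x^2 M_v(x)=v^{-1}\int\mu^2\e^{\mu x/\sqrt v-\mu^2/(2v)}\pi_1\d\mu\ge 0$, so $M_v(x)\ge M_v(0)$ pointwise; and (c) for $|x|\ge \sqrt v\lambda$, retaining only the branch whose sign matches $x$ yields $M_v(x)\ge \frac{\lambda\sqrt{2\pi v}}{4}\e^{(|x|-\sqrt v\lambda)^2/2}$, because the Gaussian CDF factor is then at least $1/2$.

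\textbf{Case $\theta=0$.} By (b), $-E\log M_v(Z)\le -\log M_v(0)=-\log[\lambda\sqrt v\,R(\sqrt v\lambda)]$; the elementary Mills-ratio lower bound $R(y)\ge (y+\sqrt{2/\pi})^{-1}$ for $y\ge 0$ converts this into $\log(1+\sqrt 2/(\lambda\sqrt{\pi v}))$, the first summand of the claim. The remaining piece $E\log M_1(Z)$ is controlled by combining the pointwise Jensen lower bound $\log M_1(x)\ge -1/\lambda^2$ on the inner integral (using $E_\mu\mu=0$ and $E_\mu\mu^2=2/\lambda^2$ under the Laplace law) with the identity $EM_1(Z)=1$; up to coarsening of constants, the resulting residual, rescaled by $1/v$, supplies the additive $4/(v\lambda^2)$.

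\textbf{Case $\theta\ne 0$.} Since $\theta^2/(2r)$ is unbounded in $\theta$, a uniform bound demands exact cancellation of the $\theta^2$ contributions between the two log-expectations. Trivialising both CDFs in the formula for $M_1$ to $1$ gives the pointwise bound $\log M_1(x)\le \log(\lambda\sqrt{2\pi})+(|x|+\lambda)^2/2$, so
$$
E\log M_1(Z+\theta)\le \log(\lambda\sqrt{2\pi})+\tfrac{1}{2}+\tfrac{\theta^2+\lambda^2}{2}+\lambda\bigl(|\theta|+\sqrt{2/\pi}\bigr).
$$
For the matching lower bound on $E\log M_v(Z+\theta/\sqrt v)$, apply (c) on the event $\{|Z+\theta/\sqrt v|\ge\sqrt v\lambda\}$ (which carries almost all the probability once $|\theta|$ is moderate) and (b) on its complement, producing a leading $\theta^2/(2v)-\lambda|\theta|+v\lambda^2/2$ plus Mills-type remainders. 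Subtracting the two and adding $\theta^2/(2r)$, the $\theta^2$ terms cancel via the identity $1/(2r)+1/2-1/(2v)=0$ (which holds because $v=r/(1+r)$), and the $\lambda|\theta|$ cross terms cancel directly; the surviving constants assemble into $\log\sqrt{32\lambda^2\pi/v}+\lambda^2/2+\lambda\sqrt{2/\pi}+4/\lambda^2$ after absorbing $v\lambda^2/2\le\lambda^2/2$. The principal obstacle is gluing the two regimes $|\theta|\gg\sqrt v\lambda$ (where branch (c) is tight) and $|\theta|\lesssim\sqrt v\lambda$ (where it is not) uniformly in $\theta$, so that the lower-order residual never depends on $\theta$; this is where the Mills-ratio book-keeping on the transition event has to be done carefully.
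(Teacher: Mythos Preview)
Your $\theta=0$ argument is essentially sound and in fact slightly sharper than the paper's: evenness and convexity give $M_v(x)\ge M_v(0)$ pointwise, while Jensen on the outer expectation gives $E\log M_1(Z)\le \log EM_1(Z)=0$, so $\rho(0,\hat p)\le -\log M_v(0)$ alone already yields the first summand. Your discussion of the residual $4/(v\lambda^2)$ is superfluous and not coherent as written (you invoke a \emph{lower} bound on $\log M_1$ where only an \emph{upper} bound is relevant). The paper instead lower-bounds $E\log M_v(Z)$ directly via Mills-ratio inequalities applied to each branch, which is why both summands appear there.

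The $\theta\ne 0$ case, however, contains a genuine gap. Your crude upper bound $\log M_1(x)\le \log(\lambda\sqrt{2\pi})+(|x|+\lambda)^2/2$ contributes a cross term $+\lambda|\theta|$ to the upper bound for $E\log M_1(Z+\theta)$. Your lower bound via~(c) correctly gives leading terms $\theta^2/(2v)-\lambda|\theta|$ in $E\log M_v(Z+\theta/\sqrt v)$. But then
\[
E\log M_1(Z+\theta)-E\log M_v(Z+\theta/\sqrt v)\ \le\ \bigl[\cdots+\lambda|\theta|\bigr]-\bigl[\cdots-\lambda|\theta|\bigr]\ =\ \cdots+2\lambda|\theta|,
\]
so the linear terms do \emph{not} ``cancel directly''---they add, and your bound diverges as $|\theta|\to\infty$. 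The identity $1/(2r)+1/2-1/(2v)=0$ only handles the quadratic part. The ``gluing'' you flag as the principal obstacle is therefore not the issue; the sign of the cross term is.

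The repair is precisely what the paper does: do not trivialise both CDFs to $1$. For $x>0$ one has $I_1^1>I_2^1$, hence $M_1(x)\le \lambda I_1^1\le \lambda/\phi(x-\lambda)$, and by symmetry $\log M_1(x)\le \log(\lambda\sqrt{2\pi})+(|x|-\lambda)^2/2$ with a \emph{minus} sign. The cross term in $E\log M_1(Z+\theta)$ is then $-\lambda E|Z+\theta|$; matching it against $-\lambda E|Z\sqrt v+\theta|$ from the lower bound on $E\log M_v$ (obtained by keeping $\log\Phi(|x|-\sqrt v\lambda)\ge\log\Phi(-\sqrt v\lambda)$ globally) leaves
\[
\lambda\bigl(E|Z\sqrt v+\theta|-E|Z+\theta|\bigr)\ \le\ \lambda(1-\sqrt v)\,E|Z|\ \le\ \lambda\sqrt{2/\pi},
\]
uniformly in $\theta$. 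With this single change no case-splitting in $\theta$ is needed, and the constants assemble into the stated bound.
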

\proof Section \ref{sec:proof_lemma:risk_lasso_upper}

\smallskip

Theorem \ref{lemma:risk_lasso_upper} and the inequality \eqref{eq:risks} suggest the following calibrations.
For the signal-less scenarios with $\lambda\rightarrow\infty$, the dominant term in the risk is $1/(\lambda\sqrt{v})$ which means that $\lambda\sqrt{v}$ should increase to infinity at least as fast as $\frac{(n-s_n)(1+r)}{s_n\log(n/s_n)}$. For the signal scenarios, the calibration may need to depend on $r$. The dominant term is $\lambda^2$ which suggests that   (a) $\lambda$ should not increase faster than $[(1-v)\log (n/s_n)]^{1/2}$ (when $r>1$) and  $[v\log (n/s_n)]^{1/2}$ when $0<r<1$ and (b) $\lambda$ should not decay slower than $[(1-v)\log (n/s_n)]^{-1/2}$. The calibration upper bound mirrors the oracle  threshold for signal recovery (up to the multiplication factor which depends on $r$). Unfortunately, the two calibration goals for signal and noise are not attainable simultaneously. 
We can ascknowledge the calibration dilemma from a plot of the Bayesian LASSO univariate prediction risk $\rho(\theta,\hat p)$ for various choices of $\lambda$ when $r=2$ (Figure \ref{fig:risk} on the left). In order for the risk at zero to be small, we need large $\lambda$ which will unfortunately inflate the risk for larger $|\theta|$. On the other hand, to verify that small $\lambda$ will inflate the risk at zero, we have the following lower bound result implying that the Bayesian LASSO prediction risk is suboptimal for the calibration $\lambda\propto\sqrt{\log (n/s_n)}$ which mirrors optimal tuning for  estimation.

\begin{figure}
\scalebox{0.45}{\includegraphics{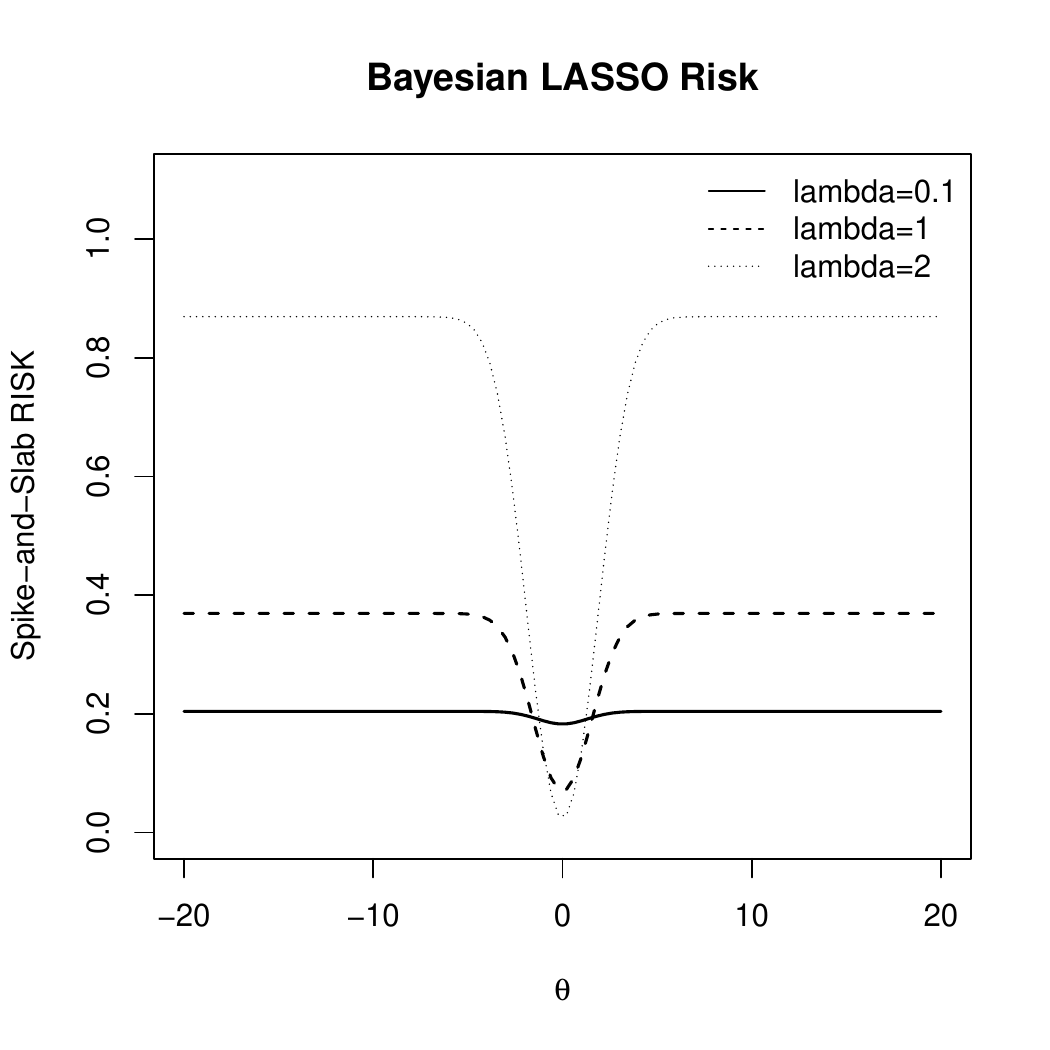}}
\scalebox{0.45}{\includegraphics{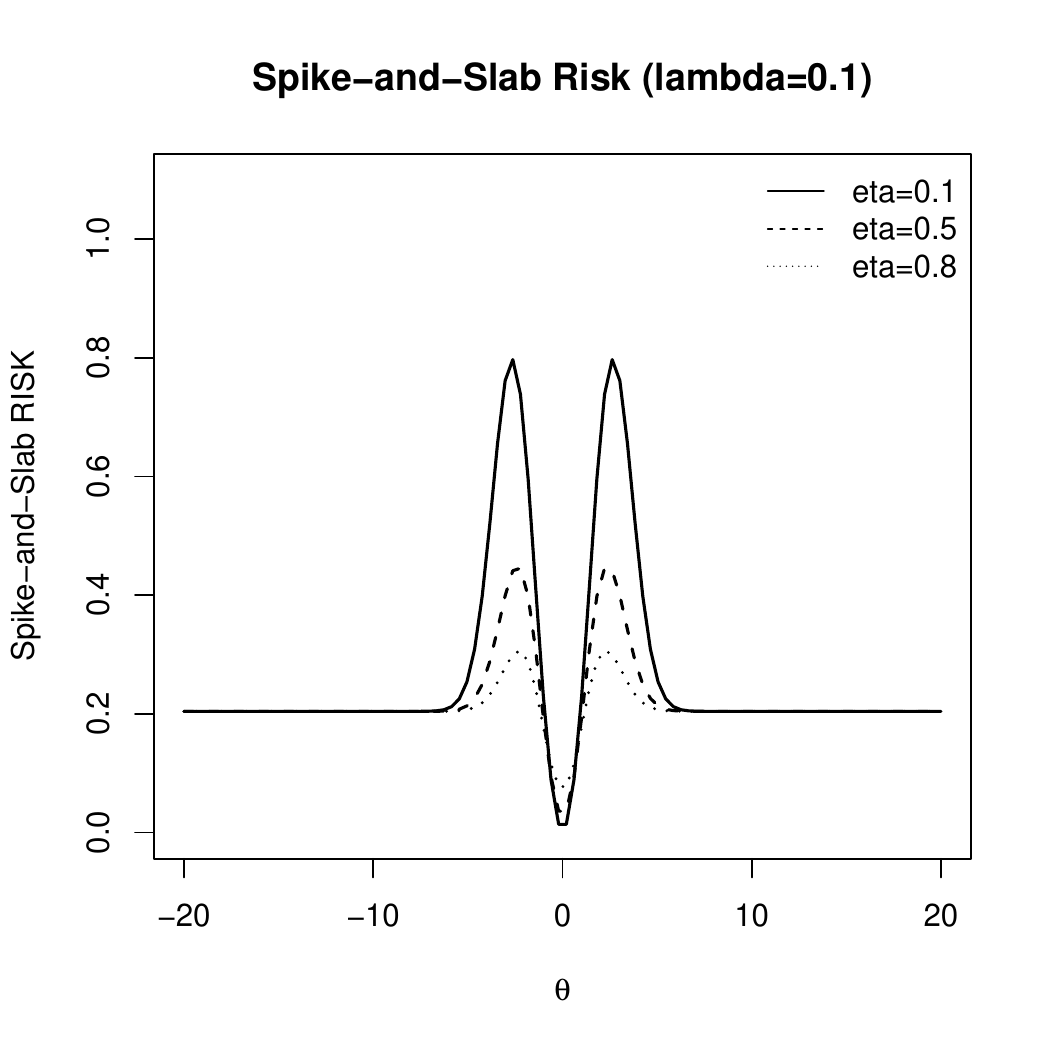}}
\caption{\small (Left) Bayesian LASSO prediction risk $\rho(\theta,\hat p)$ for $\lambda\in\{0.1,1,2\}$; (Right) Spike-and-Slab risk (Dirac spike and Laplace slab  with a penalty $\lambda=0.1$ with $\eta\in\{0.1,0.5,0.8\}$); Both plots correspond to $r=2.$}\label{fig:risk}
\end{figure}

\begin{theorem} (Bayesian LASSO is Suboptimal)\label{corollary:blasso}
Consider the Bayesian LASSO prior in \eqref{eq:lasso_prior} with $\lambda=\lambda_n\rightarrow \infty$ as $n\rightarrow\infty$. For fixed $v=1/(1+1/r)$  and a suitable $a>0$ such  that $\left(\frac{1}{\sqrt{v}}-1\right)\frac{a}{\lambda_n+a}<1$ we have
$$
\inf\limits_{\theta\in\Theta(s_n)}\rho_n(\theta,\hat p)> (n-s_n)\left[\left(\frac{1}{\sqrt{v}}-1\right)\frac{a\left(1-\Phi(a)\right)}{2(\lambda_n+a)}-\frac{1}{\lambda_n^2}\left(4+\frac{3}{v}+\frac{2}{\lambda_n\sqrt{v}}\right)\right].
$$
\end{theorem}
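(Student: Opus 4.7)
The plan is to lower bound the univariate prediction risk $\rho(0,\hat p)$ and combine with the basic inequality \eqref{eq:basic_inequality}, which yields $\inf_{\theta\in\Theta(s_n)}\rho_n(\theta,\hat p)\ge(n-s_n)\rho(0,\hat p)$. By Lemma~\ref{lemma:lasso_risk_decompose} at $\theta=0$, the task reduces to lower bounding  $\rho(0,\hat p)=E_Z\log N_{0,1}^{LASSO}(Z)-E_Z\log N_{0,v}^{LASSO}(Z)$ with $Z\sim N(0,1)$. The first move is to compute each $N^{LASSO}$ in closed form. Splitting the Laplace integral at $\mu=0$ and completing the square gives $N_{0,1}^{LASSO}(Z)=(\lambda_n/2)[R(\lambda_n-Z)+R(\lambda_n+Z)]$. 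After the substitution $\tilde\mu=\mu/\sqrt v$, the analogous manipulation yields $N_{0,v}^{LASSO}(Z)=(\lambda_n\sqrt v/2)[R(\lambda_n\sqrt v-Z)+R(\lambda_n\sqrt v+Z)]$. Consequently
\begin{equation*}
\rho(0,\hat p)=\tfrac{1}{2}\log(1/v)+E_Z\log\frac{R(\lambda_n-Z)+R(\lambda_n+Z)}{R(\lambda_n\sqrt v-Z)+R(\lambda_n\sqrt v+Z)}.
\end{equation*}

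Next I would split $E_Z$ according to the ``favourable'' event $\{|Z|\ge a\}$ and its complement. Because $R$ is decreasing and $\lambda_n\pm Z>\lambda_n\sqrt v\pm Z$, the log-ratio above is in fact negative, so the proof has to show that its negative expectation is strictly dominated by $\tfrac12\log(1/v)$ on the bulk of the Gaussian. On $\{Z\ge a\}$, I would pair the two dominant Mills pieces by applying $R(x)\le 1/x$ for $x>0$ in the denominator and a matching first-order Mills lower bound $R(x)\ge 1/(x+\text{remainder})$ in the numerator, arriving at a pointwise upper bound on the log-ratio of the form $-\tfrac12\log(1/v)+(1/\sqrt v-1)\,Z/(\lambda_n+Z)$. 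Monotonicity of $z\mapsto z/(\lambda_n+z)$ then gives the cleaner replacement $a/(\lambda_n+a)$ on $\{Z\ge a\}$, and the side hypothesis $(1/\sqrt v-1)\,a/(\lambda_n+a)<1$ is what keeps a standard log inequality such as $\log(1-u)\ge -u/(1-u)$ inside its range of validity. Subtracting from the constant $\tfrac12\log(1/v)$, weighting by $\phi(Z)\mathbf 1_{\{Z\ge a\}}$ with Gaussian mass $1-\Phi(a)$, and folding in the symmetric contribution from $\{Z\le-a\}$ produces exactly the main term $(1/\sqrt v-1)\,a(1-\Phi(a))/(2(\lambda_n+a))$.

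The complement $\{|Z|<a\}$ contributes the error. There the integrand can have unfavourable sign, and one must upper bound the absolute contribution it makes to the expectation. I would recycle the Mills-ratio remainders already developed to prove Theorem~\ref{lemma:risk_lasso_upper}: a routine but delicate bookkeeping reproduces the three summands $4/\lambda_n^2$, $3/(v\lambda_n^2)$ and $2/(\lambda_n^3\sqrt v)$, corresponding respectively to an $M(Z,\lambda_n)$ remainder, its $v$-rescaled $M(Z,\lambda_n\sqrt v)$ counterpart, and a cross term coupling the two. Multiplying the resulting lower bound on $\rho(0,\hat p)$ by $(n-s_n)$ and invoking the basic inequality closes the proof. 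I expect the main obstacle to be twofold: (i) producing a pointwise Mills-ratio inequality on $\{Z\ge a\}$ sharp enough to isolate the clean $(1/\sqrt v-1)\,a/(\lambda_n+a)$ form without leaking into $Z$-dependent remainders, and (ii) tracking the sign of the integrand on $\{|Z|<a\}$ precisely enough that the bookkeeping leaves the $1/(\lambda_n+a)$ main term dominant in the regime where the theorem is intended to be informative.
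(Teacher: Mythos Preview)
Your closed form $N_{0,v}^{LASSO}(Z)=\tfrac{\lambda\sqrt v}{2}[R(\lambda\sqrt v-Z)+R(\lambda\sqrt v+Z)]$ is correct, but the plan built on it has a genuine gap. You say you will produce on $\{Z\ge a\}$ a pointwise bound on the log-ratio of the form $-\tfrac12\log(1/v)+(1/\sqrt v-1)Z/(\lambda_n+Z)$. You call it an ``upper bound,'' yet for a lower bound on $\rho(0,\hat p)$ you would need it as a \emph{lower} bound; and as a lower bound it is simply false. Expanding $R(x)\sim 1/x-1/x^3$ one finds, for fixed $Z$ and $\lambda_n\to\infty$, that the exact integrand $\tfrac12\log(1/v)+\log\frac{R(\lambda_n-Z)+R(\lambda_n+Z)}{R(\lambda_n\sqrt v-Z)+R(\lambda_n\sqrt v+Z)}$ equals $(1-Z^2)(1/v-1)/\lambda_n^2+O(1/\lambda_n^3)$, which is $O(1/\lambda_n^2)$ and negative for $|Z|>1$. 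No pairing of standard Mills inequalities applied to the \emph{ratio} can produce a pointwise positive $O(1/\lambda_n)$ lower bound on $\{Z\ge a\}$, because the object you are bounding does not carry mass at that order. Consequently your step~(i) cannot go through, and your attribution of the three error terms to the region $\{|Z|<a\}$ is also off the mark.

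The paper's route (Lemma~\ref{lemma:lb_lasso}) avoids this obstruction by never trying to bound the ratio pointwise. Instead it bounds $E\log N_{0,1}^{LASSO}$ from \emph{below} and $E\log N_{0,v}^{LASSO}$ from \emph{above} separately, in each case replacing $I_1^v+I_2^v$ by twice the smaller (resp.\ larger) of the two and then applying Birnbaum's Mills bounds. The asymmetry of these two operations manufactures a difference containing the manifestly nonnegative term $E\log\frac{1+|z|/(\lambda_n\sqrt v)}{1+|z|/\lambda_n}$. It is \emph{this} term that is restricted to $\{|z|\ge a\}$, and since it is $\ge 0$ everywhere the complement contributes nothing to lose, not an error to control. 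The error pieces $4/\lambda_n^2$ and the $\e^{-\lambda_n^2v/2}(\cdots)$ block (which, after $\e^{-x^2/2}\log x\le 1/x^2$, yields the $3/(v\lambda_n^2)$ and $2/(\lambda_n^3\sqrt v)$ contributions) come respectively from the Mills remainder in the $N_{0,1}$ lower bound and from the tail region $\{|z|>\lambda_n\sqrt v\}$ in the $N_{0,v}$ upper bound. Finally, the factor $1/2$ and the side hypothesis on $a$ enter via $\log(1+x)>x/2$ on $(0,1)$, not via a $\log(1-u)$ inequality.
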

\proof It follows from Lemma \ref{lemma:lb_lasso} after noting that $\e^{-x^2/2}\log(x)\leq 1/x^2$ and that $\log(1+x)>x/2$ for $x\in (0,1)$.
\smallskip

The important takeaway message of Theorem \ref{corollary:blasso} is that the lower bound on the Bayesian LASSO prediction risk increases to infinity at a {\em suboptimal rate}  for calibrations $\lambda_n$ that increase to infinity at a slower pace  than $\frac{n-s_n}{s_n}\frac{1}{\log (n/s_n)}$.

\subsection{Separable Spike-and-Slab Priors}\label{sec:separable}
Section \ref{sec:BLASSO} unveils the sad truth about the  single Laplace prior that it cannot provide rate-optimal posterior predictive distributions in sparse situations.  
Precisely for the dual purpose of estimation and selection, the Spike-and-Slab  prior framework \cite{mitchell_beauchamp}  adaptively 
borrows from  two prior components depending on the magnitude of observed data.
Spike-and-Slab priors are known to have ideal adaptation properties in sparse signal recovery  such as rate-minimax adaptation in the supremum-norm sense \cite{hoffmann} (without any log factors) or  rate-minimax spatial adaptation \cite{rockova_rousseau}. This section highlights  benefits of prior mixing for prediction and demonstrates why the spike distribution is an indispensable enhancement of the simple Laplace prior from Section \ref{sec:BLASSO}.

We consider a separable (independent product) Spike-and-Slab prior for $\theta\in\R^n$ 
\begin{equation}
\pi(\theta\C\lambda,\eta)=\prod_{i=1}^n\pi(\theta_i\C\lambda,\eta),\,\,\,\text{where}\,\,\,
\pi(\theta_i\C\lambda,\eta)=\eta\pi_1(\theta_i)+(1-\eta)\pi_0(\theta_i)\,\,\,\text{and} \,\,\,  \eta\in (0,1).\label{eq:prior_spike_slab}
\end{equation}
The spike density $\pi_0(\theta_i)$ serves as a model for the noise coefficients $\theta_i=0$ while the slab density $\pi_1(\theta_i)$ serves as a  model  for the signals $\theta_i\neq 0$. The prior mixing weight $\eta\in(0,1)$ determines the amount of 
exchange between the two priors. We have, for now, silenced the dependence of $\pi_1$ on  $\lambda$ which will appear as a tuning parameter for the slab distribution in the next section.
In order to appreciate the benefits of prior mixing for the purpose of prediction, we first point out that the predictive distribution is  also a mixture. This simple fact is appreciated even more  after noting that this mixture is conveniently mixed by a weight that depends on the Bayes factor between the slab/spike models. Notice that because \eqref{eq:prior_spike_slab} is an independent product prior, the predictive rule is again a product rule. Therefore we focus on the {\em univariate} predictive distributions below.

\begin{lemma}\label{lemma:risk_mixture}
Denote by $m_j(Y_i)=\int \pi(Y_i\C\mu)\pi_j(\mu)\d\mu$ for $j=0,1$  the marginal likelihoods for $Y_i$ under the spike/slab priors $\pi_0$ and $\pi_1$. For $\eta\in (0,1)$, we define a mixing weight
\begin{equation}
\Delta_\eta(Y_i)=\frac{\eta m_1(Y_i)}{\eta m_1(Y_i)+(1-\eta) m_0(Y_i)}.\label{eq:thetax}
\end{equation}
Then the Bayesian predictive density under the prior \eqref{eq:prior_spike_slab} is a mixture, i.e. 
\begin{equation}
\hat p(\wt Y_i\C Y_i)=
\Delta_\eta(Y_i)\hat p_1(\wt Y_i\C Y_i) + [1-\Delta_\eta(Y_i)]\hat p_0(\wt Y_i\C Y_i)\label{eq:elegant_decompose}
\end{equation}
where $\hat p_j(\wt Y_i\C Y_i)=\frac{\int\pi(\wt Y_i\C\mu)\pi(Y_i\C \mu)\pi_j(\mu)\d\mu}{m_j(x)}$ for $j=0,1$ are posterior predictive densities under the  spike/slab priors (respectively).
\end{lemma}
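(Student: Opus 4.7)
The plan is to exploit the linearity of both the marginal likelihood and the posterior integral with respect to the prior mixture. Since Bayes' rule makes the numerator $\pi(Y_i\C\mu)\pi(\mu)$ linear in the prior, the entire argument collapses to a one-line rearrangement once the normalizing constant is split into its two contributions.

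\textbf{Step 1: decompose the marginal.} By linearity, the marginal likelihood under the prior \eqref{eq:prior_spike_slab} reads
\[
m(Y_i) \;=\; \int \pi(Y_i\C\mu)\,\pi(\mu\C\lambda,\eta)\,\d\mu \;=\; \eta\, m_1(Y_i) + (1-\eta)\, m_0(Y_i),
\]
so that the posterior is
\[
\pi(\mu\C Y_i) \;=\; \frac{\pi(Y_i\C\mu)\bigl[\eta\pi_1(\mu)+(1-\eta)\pi_0(\mu)\bigr]}{\eta\, m_1(Y_i)+(1-\eta)\, m_0(Y_i)}.
\]

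\textbf{Step 2: substitute into the Bayes predictive density \eqref{eq:bpd} and split.} Plugging this posterior into $\hat p(\wt Y_i\C Y_i)=\int \pi(\wt Y_i\C\mu)\pi(\mu\C Y_i)\,\d\mu$ and distributing the sum inside the integral produces two pieces of the form $\int \pi(\wt Y_i\C\mu)\pi(Y_i\C\mu)\pi_j(\mu)\,\d\mu$ for $j=0,1$. Multiplying and dividing each of these pieces by $m_j(Y_i)$ makes the factor $\hat p_j(\wt Y_i\C Y_i)$ appear, and leaves behind coefficients
\[
\frac{\eta\, m_1(Y_i)}{\eta\, m_1(Y_i)+(1-\eta)\, m_0(Y_i)} \quad \text{and} \quad \frac{(1-\eta)\, m_0(Y_i)}{\eta\, m_1(Y_i)+(1-\eta)\, m_0(Y_i)},
\]
which are precisely $\Delta_\eta(Y_i)$ and $1-\Delta_\eta(Y_i)$. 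This yields \eqref{eq:elegant_decompose} directly.

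\textbf{Main obstacle.} There is essentially none: the lemma is a restatement of the fact that the Bayes predictive density commutes with convex prior mixing, with the mixing weight updated from $\eta$ to the posterior inclusion probability $\Delta_\eta(Y_i)$ via Bayes' factor. The only bookkeeping point is that the multiplication-and-division trick requires $m_0(Y_i)$ and $m_1(Y_i)$ to be strictly positive, which holds everywhere for the Gaussian likelihood paired with a Dirac or Laplace component considered in the paper; if one $m_j(Y_i)$ vanished, the corresponding term in \eqref{eq:elegant_decompose} could be defined by continuity (or removed) without altering the identity.
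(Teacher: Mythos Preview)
Your proof is correct and matches the paper's approach exactly: the paper simply states that the result ``follows readily from the definition $\hat p(\wt Y_i\C Y_i)=\frac{\int\pi(\wt Y_i\C\mu)\pi(\mu\C Y_i)\d\mu}{\int \pi(\mu\C Y_i)\d\mu}$,'' and your Steps 1--2 spell out precisely this decomposition via linearity and the multiply-divide by $m_j(Y_i)$ trick.
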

\proof 
This follows readily from the definition 
$\hat p(\wt Y_i\C Y_i)=\frac{\int\pi(\wt Y_i\C\mu)\pi(\mu\C Y_i)\d\mu}{\int \pi(\mu\C Y_i)\d\mu}$.

\smallskip

This elegant decomposition provides useful insights into the workings of the mixture prior. The predictive density will be dominated by the slab predictive density when $\Delta_\eta(Y_i)$ is close to one, i.e. the observation $Y_i$ is more likely to have arisen from the marginal distribution $m_1$ as opposed to $m_0$. This is very intuitive. The opposite is true when the data $Y_i$ supports the spike model, in which case the predictive density is taken over by the spike predictive density. The amount of support for the spike/slab predictive densities is determined by the ratio of marginal likelihoods evaluated at the observed data $Y_i$.
In other words, we can rewrite the mixing weight  as a functional of the Bayes factor
$$
\Delta_\eta(Y_i)=\left[ 1+\frac{1-\eta}{\eta} BF(Y_i;0,1)\right]^{-1},
$$ 
where $BF(Y_i;0,1)=\frac{m_0(Y_i)}{m_1(Y_i)}$ is the Bayes factor for the spike model versus the slab model. 
Being able to switch between two regimes has been exploited for the purpose of constructing  predictive density estimators by \cite{muk1}. These authors proposed a class of univariate predictive density estimators that are analogs of hard thresholding in that they glue together two densities depending on the magnitude (signal detectability) of observed data $|Y_i|$. These  estimators, however, do  not have a fully Bayesian motivation and are not smooth  with respect to  $Y_i$. The discussion above highlights that regime switching is achieved {\em automatically and smoothly} within the Spike-and-Slab framework.   The predictive density decomposition  in Lemma \ref{lemma:risk_mixture} invites the possibility of upper-bounding the risk in terms of separate spike/slab risks.

\begin{lemma}\label{lemma:simple_bound_risk}
Denoting the average mixing weight
$
\Lambda(\theta_i)=\int\Delta_\eta(Y_i) \pi(Y_i\C\theta_i)\d x
$
and using the notation in Lemma \ref{lemma:risk_mixture}, the prediction risk under the prior \eqref{eq:prior_spike_slab} satisfies
\begin{align}
\rho(\theta_i,\hat p)
&<\Lambda(\theta_i) \rho(\theta_i,\hat p_1)+(1-\Lambda(\theta_i))\rho(\theta_i,\hat p_0).\label{eq:simple_upper_bound}
\end{align}

\end{lemma}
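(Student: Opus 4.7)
The plan is to derive the bound as a direct consequence of the mixture representation of $\hat p$ from Lemma \ref{lemma:risk_mixture} combined with Jensen's inequality applied to the strictly convex function $-\log(\cdot)$. For fixed $Y_i$, the decomposition
$\hat p(\wt Y_i\C Y_i)=\Delta_\eta(Y_i)\hat p_1(\wt Y_i\C Y_i)+(1-\Delta_\eta(Y_i))\hat p_0(\wt Y_i\C Y_i)$
expresses the Bayes predictive density as a convex combination whose mixing weight does \emph{not} depend on the future observation $\wt Y_i$. The first step is therefore to apply strict convexity of $-\log$ at each $\wt Y_i$, yielding
$$
-\log \hat p(\wt Y_i\C Y_i) < -\Delta_\eta(Y_i)\log \hat p_1(\wt Y_i\C Y_i) - (1-\Delta_\eta(Y_i))\log \hat p_0(\wt Y_i\C Y_i)
$$
on the event $\{\hat p_0(\wt Y_i\C Y_i)\neq \hat p_1(\wt Y_i\C Y_i)\}$, which has positive Lebesgue measure in any non-degenerate case.

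Next, I would add $\log\pi(\wt Y_i\C\theta_i)=\Delta_\eta(Y_i)\log\pi(\wt Y_i\C\theta_i)+(1-\Delta_\eta(Y_i))\log\pi(\wt Y_i\C\theta_i)$ to both sides and integrate against $\pi(\wt Y_i\C\theta_i)\,\d\wt Y_i$. This converts the pointwise bound into the conditional-loss inequality
$$
L(\theta_i,\hat p(\cdot\C Y_i)) < \Delta_\eta(Y_i)\,L(\theta_i,\hat p_1(\cdot\C Y_i)) + (1-\Delta_\eta(Y_i))\,L(\theta_i,\hat p_0(\cdot\C Y_i)).
$$
A second integration against $\pi(Y_i\C\theta_i)\,\d Y_i$ together with Fubini's theorem then yields an upper bound on $\rho(\theta_i,\hat p)$ expressed as a sum of two weighted KL integrals, carrying weights $\Delta_\eta(Y_i)$ and $1-\Delta_\eta(Y_i)$.

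The final step is to identify these weighted integrals with $\Lambda(\theta_i)\,\rho(\theta_i,\hat p_1)$ and $(1-\Lambda(\theta_i))\,\rho(\theta_i,\hat p_0)$. I expect this to be the genuinely delicate part of the argument, since factoring $\int \Delta_\eta(Y_i) L(\theta_i,\hat p_1(\cdot\C Y_i))\pi(Y_i\C\theta_i)\,\d Y_i$ as $\Lambda(\theta_i)\,\rho(\theta_i,\hat p_1)$ in general demands a covariance bound. I would establish the required covariance inequalities via monotonicity: the Bayes-factor representation $\Delta_\eta(Y_i)=[1+\tfrac{1-\eta}{\eta}\,m_0(Y_i)/m_1(Y_i)]^{-1}$ makes $\Delta_\eta(Y_i)$ monotone in $|Y_i|$ for the symmetric spike/slab pairs of interest, while $L(\theta_i,\hat p_1(\cdot\C Y_i))$ and $L(\theta_i,\hat p_0(\cdot\C Y_i))$ carry opposite monotonicities in $|Y_i|$ (the slab predictive fits large $|Y_i|$ well, the spike predictive fits small $|Y_i|$ well). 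A Chebyshev/FKG-type sum inequality under $\pi(\cdot\C\theta_i)$ then delivers $\mathrm{Cov}(\Delta_\eta(Y_i),L(\theta_i,\hat p_1(\cdot\C Y_i)))\leq 0$ and $\mathrm{Cov}(\Delta_\eta(Y_i),L(\theta_i,\hat p_0(\cdot\C Y_i)))\geq 0$, from which the factored bound in the statement follows. The strict inequality $<$ is inherited from the strict convexity step at the outset.
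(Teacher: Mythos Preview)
Your Jensen step matches the paper's argument: the paper phrases it via an auxiliary Bernoulli variable $\gamma\in\{0,1\}$ with $P(\gamma=1)=\Delta_\eta(Y_i)$, writes $\hat p=E_\gamma[\gamma\hat p_1+(1-\gamma)\hat p_0]$, and applies $E_\gamma\log X<\log E_\gamma X$. This is exactly your convexity-of-$(-\log)$ step, and both yield the same intermediate inequality
\[
\rho(\theta_i,\hat p)\;<\;\int \pi(Y_i\C\theta_i)\bigl[\Delta_\eta(Y_i)\,L(\theta_i,\hat p_1(\cdot\C Y_i))+(1-\Delta_\eta(Y_i))\,L(\theta_i,\hat p_0(\cdot\C Y_i))\bigr]\d Y_i.
\]

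Where you and the paper diverge is precisely at the factorization. The paper simply asserts that the right-hand side \emph{equals} $\Lambda(\theta_i)\rho(\theta_i,\hat p_1)+(1-\Lambda(\theta_i))\rho(\theta_i,\hat p_0)$, with no covariance argument. You are correct that in general $E[\Delta_\eta L_1]\neq E[\Delta_\eta]\,E[L_1]$, so the paper's final ``$=$'' is unjustified as written; the paper immediately remarks that ``this bound is perhaps more intuitive than useful'' and never relies on it, so the slip is not load-bearing for the main results.

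Your proposed FKG/Chebyshev fix is a natural idea, but the monotonicity hypotheses need more care than you indicate. For $\theta_i\neq 0$ the loss $L(\theta_i,\hat p_1(\cdot\C Y_i))$ is not governed by $|Y_i|$ but by how $Y_i$ relates to $\theta_i$, so the claimed ``slab loss decreasing in $|Y_i|$, spike loss increasing in $|Y_i|$'' is not obvious and may fail. If you want a rigorous version, it is cleaner to state the lemma with the unfactored right-hand side $E_{Y_i\C\theta_i}[\Delta_\eta L_1+(1-\Delta_\eta)L_0]$, which is what the Jensen step actually delivers and which carries the same intuition.
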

\proof It follows from a simple application of Jensen's inequality $E \log  X<\log E X$ which yields (with the expectation taken over a binary random variable $\gamma\in \{0,1\}$ with $P(\gamma=1)=\Delta_\eta(Y_i)$)
\begin{align*}
\rho(\theta_i,\hat p)
&<\int\pi( Y_i\C\theta_i)\left\{\int\pi( \wt Y_i\C\theta_i)E \left[\log \frac{\pi( \wt Y_i\C\theta_i)}{\gamma \hat p_1( \wt Y_i\C  Y_i)+(1-\gamma)\hat p_0( \wt Y_i\C  Y_i)}  \right]\d  \wt Y_i\right\} \d  Y_i\\
&=\Lambda(\theta_i) \rho(\theta_i,\hat p_1)+(1-\Lambda(\theta_i))\rho(\theta_i,\hat p_0).
\end{align*}

This elegant upper bound shows how the risk is dominated either by the spike predictive density (for parameter values $\theta_i$ such that $\Lambda(\theta_i)$ is small) or the slab predictive density  (for parameter values $\theta_i$ such that $\Lambda(\theta_i)$ is large).  This bound is perhaps more intuitive than useful, however. Our analysis rests on a more precise characterization of the risk. In Lemma \ref{lemma:decompose} below, we obtain a risk decomposition for $\rho(\theta_i,\hat p)$ but for  {\em general} mixtures  \eqref{eq:prior_spike_slab}.

\begin{lemma}\label{lemma:decompose}
The univariate risk for the Spike-and-Slab prior \eqref{eq:prior_spike_slab}   satisfies for $\theta\in\R$
\begin{equation}
\rho(\theta,\hat p)= \rho(\theta,\hat p_1)+ E \log    N_{\theta,1}^{SS}(Z)
- E \log   N_{\theta,v}^{SS}(Z),\label{eq:SS_slab_risk}
\end{equation}
where
$$
  N_{\theta,v}^{SS}(z)=\left[1+\frac{1-\eta}{\eta}\frac{\int \exp\left(\mu(\frac{\theta}{v}+\frac{z}{\sqrt{v}})-\frac{\mu^2}{2v}\right)\pi_0(\mu)\d\mu}{\int\exp\left(\mu(\frac{\theta}{v}+\frac{z}{\sqrt{v}})-\frac{\mu^2}{2v}\right)\pi_1(\mu)\d\mu}\right].
$$
and $v=1/(1+1/r)$ and where the expectation is taken over $Z\sim N(0,1)$.
\end{lemma}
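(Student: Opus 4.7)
My plan is to mirror the derivation of Lemma \ref{lemma:lasso_risk_decompose}, but exploit the mixture structure of the Spike-and-Slab posterior in order to pull out a ``slab only'' risk. Writing $m_j(Y)=\int\pi(Y\C\mu)\pi_j(\mu)\d\mu$ and $m_j(\wt Y,Y)=\int\pi(\wt Y\C\mu)\pi(Y\C\mu)\pi_j(\mu)\d\mu$, I would first show the elementary factorization
\begin{equation*}
\hat p(\wt Y\C Y)=\hat p_1(\wt Y\C Y)\,\frac{A(\wt Y,Y)}{A(Y)},\quad A(Y)=1+\frac{1-\eta}{\eta}\frac{m_0(Y)}{m_1(Y)},\quad A(\wt Y,Y)=1+\frac{1-\eta}{\eta}\frac{m_0(\wt Y,Y)}{m_1(\wt Y,Y)},
\end{equation*}
which follows by dividing numerator and denominator of $\hat p=\frac{\eta m_1(\wt Y,Y)+(1-\eta)m_0(\wt Y,Y)}{\eta m_1(Y)+(1-\eta)m_0(Y)}$ by the respective slab marginals.

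Substituting into the definition of $\rho(\theta,\hat p)$ and using linearity of the logarithm, the risk splits as
\begin{equation*}
\rho(\theta,\hat p)=\rho(\theta,\hat p_1)+\int\pi(Y\C\theta)\log A(Y)\d Y-\int\pi(Y\C\theta)\int\pi(\wt Y\C\theta)\log A(\wt Y,Y)\d\wt Y\d Y.
\end{equation*}
It then remains to identify the two remaining integrals with $E\log N^{SS}_{\theta,1}(Z)$ and $E\log N^{SS}_{\theta,v}(Z)$. For $A(Y)$, I would write $Y=\theta+Z$ with $Z\sim N(0,1)$, pull the common factor $\exp(-Y^2/2)/\sqrt{2\pi}$ out of both $m_0(Y)$ and $m_1(Y)$, and verify by inspection that the ratio equals $N^{SS}_{\theta,1}(Z)$.

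The main computation, and the slight obstacle, is the analogous identification for $A(\wt Y,Y)$. The key step is to complete the square in $\mu$ inside the joint exponent $-(\wt Y-\mu)^2/(2r)-(Y-\mu)^2/2$, which yields the linear term $\mu(\wt Y/r+Y)$ and quadratic term $-\mu^2/(2v)$ with $v=1/(1+1/r)$, so that
\begin{equation*}
\frac{m_0(\wt Y,Y)}{m_1(\wt Y,Y)}=\frac{\int\exp\{\mu(\wt Y/r+Y)-\mu^2/(2v)\}\pi_0(\mu)\d\mu}{\int\exp\{\mu(\wt Y/r+Y)-\mu^2/(2v)\}\pi_1(\mu)\d\mu}.
\end{equation*}
Under the data-generating law $\wt Y=\theta+\sqrt{r}\wt Z$, $Y=\theta+Z$ with $\wt Z,Z\iid N(0,1)$, a direct computation gives $\wt Y/r+Y=\theta/v+\wt Z/\sqrt{r}+Z$, and since $\wt Z/\sqrt{r}+Z\sim N(0,1/v)$, one may rewrite $\wt Y/r+Y=\theta/v+Z^*/\sqrt{v}$ with $Z^*\sim N(0,1)$. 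Matching this to the definition of $N^{SS}_{\theta,v}$ shows $A(\wt Y,Y)=N^{SS}_{\theta,v}(Z^*)$ in distribution. Substituting the two identifications into the displayed split for $\rho(\theta,\hat p)$ yields \eqref{eq:SS_slab_risk}. The only bookkeeping care needed is to keep track of which standard normal $Z$ is being integrated against in each term, since the two appearances of $E\log N^{SS}$ in \eqref{eq:SS_slab_risk} involve two different ``sufficient statistics'' $Y$ versus $\wt Y/r+Y$ that happen to admit the same marginal standardization.
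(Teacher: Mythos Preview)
Your proposal is correct and follows essentially the same route as the paper. The paper also factors $\hat p(\wt Y\C Y)=\hat p_1(\wt Y\C Y)\cdot A(\wt Y,Y)/A(Y)$ (written there via $\Delta_\eta(Y)=1/A(Y)$ and the identity $\frac{1-\Delta_\eta}{\Delta_\eta}\frac{\hat p_0}{\hat p_1}=\frac{1-\eta}{\eta}\frac{m_0(\wt Y,Y)}{m_1(\wt Y,Y)}$), and then uses the same distributional identification $Y+\wt Y/r\sim N(\theta/v,1/v)=\theta/v+Z/\sqrt{v}$ to match the two correction terms to $N^{SS}_{\theta,1}$ and $N^{SS}_{\theta,v}$.
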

\proof Section \ref{sec:proof_lemma:decompose}

\begin{remark}\label{rem:decompose}
Alternatively, we could write
$
\rho(\theta,\hat p)= \rho(\theta,\hat p_0)+ E \log  N_{\theta,1}(z)
- E \log   N_{\theta,v}(z)
$
for 
$N_{\theta,v}(z)=\left[1+\frac{\eta}{1-\eta}\frac{\int \exp\left(\mu(\frac{\theta}{v}+\frac{z}{\sqrt{v}})-\frac{\mu^2}{2v}\right)\pi_1(\mu)\d\mu}{\int\exp\left(\mu(\frac{\theta}{v}+\frac{z}{\sqrt{v}})-\frac{\mu^2}{2v}\right)\pi_0(\mu)\d\mu}\right].$
For the point-mass spike $\pi_0=\delta_0$ we would then obtain the same expression as in Theorem 2.1 of \cite{muk1}.
\end{remark}

Lemma \ref{lemma:decompose} is a simple generalization of  Theorem 2.1 in \cite{muk1}, who showed risk decomposition for {\em point-mass} spike-and-slab priors. 
We present it here because it is useful for other Spike-and-Slab priors such as the Spike-and-Slab LASSO  \cite{rockova} studied in Section \ref{sec:ssl}.
In comparison with Lemma \ref{lemma:lasso_risk_decompose}, we have a different expression $N_{\theta,v}^{SS}(z)$ which depends on the prior odds $(1-\eta)/\eta$ whose calibration will be crucial.

\subsubsection{Dirac Spike and Laplace Slab}\label{sec:dirac_laplace}
In Section \ref{sec:BLASSO}, we convinced ourselves that a single Laplace prior will not be able to yield rate-optimal predictive distributions.
We now show that  the Laplace slab $\pi_1(\theta\C\lambda)$ {\em in concert with} a Dirac spike $\pi_0(\theta)$ within \eqref{eq:prior_spike_slab} does.

\begin{theorem}\label{thm:dirac_laplace}
Assume the Spike-and-Slab prior  \eqref{eq:prior_spike_slab} with a Dirac spike $\pi_0(\theta_i)=\delta_0(\theta_i)$ and a Laplace slab $\pi_1(\theta_i\C\lambda)$ in \eqref{eq:lasso_prior}. Denote $v=1/(1+1/r)$ and set $(1-\eta)/\eta= n/s_n$. 
Choose $\lambda^2=vC_r$ for $C_r>2/[v(1/2+4)]$ when $0<r<1$ and  $\lambda^2=(1-v)C^*_r$  for $C^*_r>2/[5(1-v)]$ when $r\geq1$  
then with $s_n/n\rightarrow 0$ we have for any fixed $ r\in (0,\infty)$
\begin{equation}
\sup_{\theta\in \Theta(s_n)}\rho_n(\theta,\hat p)\leq \frac{5}{1+r}s_n\log (n/s_n)+\wt C(r) \label{eq:rate_optimal}
\end{equation}
where $\wt C(r)$ is either \eqref{eq:remainder1} or \eqref{eq:remainder2}.
\end{theorem}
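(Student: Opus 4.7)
The plan is to invoke the additive splitting \eqref{eq:basic_inequality}, which reduces matters to separately bounding the noise risk $\rho(0,\hat p)$ and the signal risk $\sup_{\theta\in\R}\rho(\theta,\hat p)$, and then to insert the Spike-and-Slab risk decompositions from Lemma \ref{lemma:decompose} and Remark \ref{rem:decompose}. The prior-weight calibration $(1-\eta)/\eta=n/s_n$ injects a $\log(n/s_n)$ factor into the signal term, while the Dirac spike $\pi_0=\delta_0$ collapses every spike integral $\int\e^{\cdot}\pi_0(\mu)\d\mu$ to $1$ and makes $\rho(0,\hat p_0)$ exactly zero.

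For the noise term I would use Remark \ref{rem:decompose}, which reduces $\rho(0,\hat p)$ to $E\log N_{0,1}(Z)-E\log N_{0,v}(Z)$. Since $N_{0,v}(z)\geq 1$ the second term can be dropped. For the first, the inequality $\log(1+x)\leq x$, Fubini, and the Gaussian identity $E\,\e^{\mu Z-\mu^2/2}=1$ together give $E\log N_{0,1}(Z)\leq s_n/n$. Hence $(n-s_n)\rho(0,\hat p)\leq s_n$, which is dominated by the target rate.

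For signal coordinates I would apply Lemma \ref{lemma:decompose}. Theorem \ref{lemma:risk_lasso_upper} handles the Laplace-only contribution $\rho(\theta,\hat p_1)$: under the prescribed calibration of $\lambda$ each of its three pieces is $O(1)$ (the dominant $\lambda^2/2$, the logarithm, and $4/\lambda^2$, the last controlled by the stipulated lower bound on $C_r$ or $C_r^*$), so this term feeds only into $\wt C(r)$. What remains is the difference $E\log N_{\theta,1}^{SS}(Z)-E\log N_{\theta,v}^{SS}(Z)$, which after collapsing the spike integrals reads
$$
E\log\!\bigl(1+(n/s_n)/I_{\theta,1}(Z)\bigr)-E\log\!\bigl(1+(n/s_n)/I_{\theta,v}(Z)\bigr),\quad I_{\theta,v}(z):=\int\e^{\mu(\theta/v+z/\sqrt v)-\mu^2/(2v)}\pi_1(\mu\C\lambda)\d\mu.
$$
Rewriting each $I_{\theta,v}$ as the ratio of a closed-form Laplace-against-Gaussian convolution to the corresponding Gaussian density and invoking Mills-ratio identities for $\Phi$ yields sharp uniform lower bounds on $I_{\theta,v}$ and a handle on the ratio $I_{\theta,v}/I_{\theta,1}$. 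Splitting the expectations according to whether $(n/s_n)/I_{\theta,1}(Z)$ is large or small (using $\log(1+x)\leq\log(2x)$ in the first case and $\log(1+x)\leq x$ in the second), the two $\log(n/s_n)$ pieces in the positive and negative terms nearly cancel, leaving a residual of order $\log(n/s_n)/(1+r)$ whose constant multiplier the calibration of $\lambda$ keeps below $5$.

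The main technical obstacle is this last step: ensuring the near-cancellation is sharp enough to deliver the prefactor $1/(1+r)=1-v$ on $\log(n/s_n)$ rather than a fixed $O(1)$ constant. This is precisely why the statement splits into two calibration regimes, $\lambda^2\propto v$ when $0<r<1$ and $\lambda^2\propto 1-v$ when $r\geq 1$, since a single choice of $\lambda$ cannot simultaneously control the Laplace-only signal risk through Theorem \ref{lemma:risk_lasso_upper} and the Gaussian–Laplace integrals in the residual across both variance regimes. Collecting the $O(1)$ residuals from Theorem \ref{lemma:risk_lasso_upper}, the noise contribution, and the Mills-ratio constants into $\wt C(r)$ as in \eqref{eq:remainder1}--\eqref{eq:remainder2} completes the argument.
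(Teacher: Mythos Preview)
Your high-level architecture matches the paper: use \eqref{eq:basic_inequality} to split into noise and signal, handle $\theta=0$ via Remark~\ref{rem:decompose} with $N_{0,v}\geq 1$ and Jensen/Fubini to get $\rho(0,\hat p)\leq \eta/(1-\eta)=s_n/n$ (this is exactly the paper's argument), and then attack $\theta\neq 0$ via a case split on the size of the data. The choice to route the signal part through Lemma~\ref{lemma:decompose} and absorb $\rho(\theta,\hat p_1)$ as an $O(1)$ term via Theorem~\ref{lemma:risk_lasso_upper} is a legitimate variant; the paper instead works directly from the Remark~\ref{rem:decompose} form $g(z,\theta,v)=\theta^2/(2r)+\log N_{\theta,1}(z)-\log N_{\theta,v}(z)$ and only implicitly recovers the Lemma~\ref{lemma:decompose} split in one of its two regimes.

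Where your sketch is incomplete is precisely the step you flag as the obstacle, and your description of how the factor $1/(1+r)=1-v$ appears is not quite right. There is no ``near-cancellation of two $\log(n/s_n)$ pieces.'' In the paper the case split is placed at the \emph{specific} threshold $1/R_1=(\eta/(1-\eta))^{v}$, i.e.\ $I_{\theta,1}\asymp (n/s_n)^{v}$, not merely ``large vs.\ small.'' On the slab-dominated side this threshold is what turns $((1-\eta)/\eta)\cdot 1/R_1$ into $((1-\eta)/\eta)^{1-v}$ inside the logarithm, producing $(1-v)\log(n/s_n)$ directly. On the spike-dominated side the paper does \emph{not} try to control your $N^{SS}$ difference; it reverts to the Remark~\ref{rem:decompose} form and bounds $\theta^2/(2r)$ using the threshold constraint $|z+\theta|\lesssim \lambda+\sqrt{2v\log(n/s_n)}$, together with the identity $v/r=1-v$, to obtain $4(1-v)\log(n/s_n)$. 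Summing the two regimes gives the constant $5$. If you stay in the Lemma~\ref{lemma:decompose} framework throughout you will still need a nontrivial lower bound on $\log N^{SS}_{\theta,v}$ in the spike-dominated regime (dropping it costs you the $(1-v)$ factor entirely), and working that out essentially reproduces the paper's Case~(ii) argument in a less direct way. So: right plan, but the threshold $((1-\eta)/\eta)^{v}$ and the Case~(ii) handling of $\theta^2/(2r)$ are the missing concrete ingredients.
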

\proof Section \ref{sec:proof_thm:dirac_laplace}.

\smallskip
Theorem \ref{thm:dirac_laplace} shows that the minimax-rate predictive performance is attainable by Spike-and-Slab priors with a simple Laplace slab distribution. This prior is widely used in practice \cite{ray_szabo}. 
Of course, our result in Theorem \ref{thm:dirac_laplace} is only rate-minimax, where the multiplication constant may not optimal.
Up until now, minimax (rate-optimality) property was established for discrete (grid) slab priors which may be simpler to treat analytically  \cite{muk2}.
The only other continuous slab result in the literature so far is in \cite{muk2} who analyzed {\em uniform} slabs over a finite parameter domain depending on $s_n$ (which are perhaps not as widely used).   
\begin{remark}(Calibration)
What is particularly noteworthy in Theorem \ref{thm:dirac_laplace} is the prior calibration. The oracle tuning for Spike-and-Slab priors in the context of estimation would be $\eta/(1-\eta)=s_n/n$. 
This is  the oracle tuning for $\eta$ in the prediction problem as well. However, in terms of calibrating $\lambda$, we distinguish between two regimes. When $r>1$,  the future observation is noisier than the observed data which makes the prediction problem simpler. In this case, we can choose $\lambda$ to be a small fixed constant which does not need to depend on $n$.  This corresponds to the usual Spike-and-Slab calibration and small $\lambda$ regime discussed earlier in \cite{rockova}. 
Here we tried to optimize $\lambda$  as a function of $r$. For the more difficult case when $0<r<1$,  i.e.
when the observed data is noisier, we can also choose  $\lambda$ proportional to the usual oracle detection threshold $\sqrt{2\log (n/s_n)}$ suitably rescaled by  a constant multiple of $\sqrt{v}$.
\end{remark}
\begin{remark}
Inside the proof of Theorem \ref{thm:dirac_laplace} we distinguish between two regimes: (1) when $|Y_i|>\lambda+\sqrt{2v\log (n/s_n)}$, with i.e. the observed data is above the usual detection threshold rescaled by $v$ and shifted by $\lambda$, and (2) when $|Y_i|\leq \lambda+\sqrt{2v\log (n/s_n)}$. In the first case,   the slab predictive density takes over and the reverse is true for case (2).  \cite{muk1} used a related regime switching idea to construct an estimator by pasting two predictive densities depending on the size of $|Y_i|$. We leverage these two regimes only inside a proof, not for a construction of the estimator.
\end{remark}
  It is interesting to compare the risk performance of the single Laplace prior and the Spike-and-Slab prior with a Laplace slab in Figure \ref{fig:risk}. The right plot corresponds to the calibration $\lambda=0.1$, which benefits values $\theta_i\neq 0$ . In order to diminish risk at zero, we need to decrease $\eta$. Compared with the left plot, Bayesian LASSO risk asymptotes towards the same value as $|\theta_i|\rightarrow\infty$, but has an elevated risk at zero. This confirms our intuition that the proper calibration for the Spike-and-Slab prior is a small (fixed constant) $\lambda$ and a small $\eta$ which depends on $s_n/n$. The difference between the risks for the Bayesian LASSO (left plot) with $\lambda=0.1$ and Spike-and-Slab with a Laplace slab  (right plot) with $\lambda=0.1$ is striking. The mixture prior can suppress risk at zero by decreasing $\eta$, while keeping the risk small for larger $|\theta_i|$. Notably, there are certain values of $\theta_i$ for which the risk is inflated due to uncertainty whether or not the underlying parameter $\theta_i$ arrived more likely from the slab/spike.

\subsubsection{The Spike-and-Slab LASSO}\label{sec:ssl}
The Spike-and-Slab LASSO prior \cite{rockova} has become   popular  for sparsity recovery  due to its self-adaptive shrinkage properties. It generalizes the Laplace prior \eqref{eq:lasso_prior} by deplying a two-point Laplace mixture \eqref{eq:prior_spike_slab} where
\begin{equation}
\pi_0(\theta_i)=\lambda_0/2\e^{-\lambda_0|\theta_i|}\quad\text{and}\quad  \pi_1(\theta_i)=\lambda_1/2\e^{-\lambda_1|\theta_i|} \quad\text{with $\lambda_1<\lambda_0$}.\label{eq:laplaces}
\end{equation}
This prior has been successfully implemented in high-dimensional regression \cite{RG14b}, graphical models \cite{deshpande}, factor analysis \cite{RG15}, biclustering \cite{moran_biclus}, among others  \cite{bai}.  The perception had been the  Spike-and-Slab theory holds only for point-mass spikes $\pi_0(\cdot)=\delta_0(\cdot)$. \citet{rockova}, however, showed that asymptotic minimaxity is attainable also for these {\em continuous} spike distributions in the context of estimating sparse mean under the Euclidean loss. Here, we examine the ability of Spike-and-Slab LASSO priors to yield optimal predictive distributions. While the majority of implementations of the Spike-and-Slab LASSO focus on posterior mode detection, posterior simulation is available through traditional Gibbs sampling  or Bayesian bootstrap techniques \cite{nie}. The ability to simulate from the posterior  
makes the posterior predictive distribution readily available. Here, we show that it is in fact rate-optimal when properly calibrated.

\begin{theorem}\label{thm:ssl}
Assume the Spike-and-Slab LASSO prior  \eqref{eq:prior_spike_slab} with \eqref{eq:laplaces} and $(1-\eta)/\eta=c$ for some fixed constant $c>0$. Choose $\lambda_0 =n/s_n$ and $\lambda_1$ as in Theorem \ref{thm:dirac_laplace}. 
When $s_n/n\rightarrow 0$, then we have 
\begin{equation}
\sup_{\theta\in \Theta(s_n)}\rho(\theta,\hat p)\sim \frac{s_n}{1+r} \log (n/s_n)\label{eq:rate_optimal}
\end{equation}
for any fixed $r\in(0,1)$. The same conclusion holds for $r\in  [1,\infty)$ for parameters $\theta\in\Theta_n(s_n)\cap\{\theta\in\R^n:\min_{1\leq i\leq n}|\theta_i|>c\sqrt{\log (n/s_n)}\} $ for $c>2\sqrt{v}$.
\end{theorem}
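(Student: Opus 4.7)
Since the Spike-and-Slab LASSO prior is a product over coordinates, the total risk decomposes additively and I only need to control the univariate risk $\rho(\theta_i,\hat p)$ separately at noise coordinates ($\theta_i=0$) and signal coordinates ($\theta_i\neq 0$), then sum over at most $n-s_n$ noise and $s_n$ signal entries. The plan is to mirror the structure of the proof of Theorem~\ref{thm:dirac_laplace}, arguing that the Laplace spike with $\lambda_0=n/s_n$ behaves like a Dirac mass inside the marginal-likelihood computations up to errors that can be absorbed into the target rate. The constant prior-odds choice $(1-\eta)/\eta=c$ forces the Bayes factor between the spike and slab marginals to do all of the adaptive selection work on its own, so the entire analysis hinges on comparing two Laplace-Gaussian convolutions.

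\textbf{Noise coordinates.} For $\theta_i=0$ I use the alternative decomposition of Remark~\ref{rem:decompose}, $\rho(0,\hat p)=\rho(0,\hat p_0)+E\log N_{0,1}(Z)-E\log N_{0,v}(Z)$. The first term is the single-Laplace predictive risk at zero with penalty $\lambda_0$, which Theorem~\ref{lemma:risk_lasso_upper} bounds by $\rho(0,\hat p_0)\lesssim 1/(\lambda_0\sqrt{v})+1/(\lambda_0^2 v)\lesssim s_n/n$, contributing only $O(s_n)$ after summing over $n-s_n$ noise coordinates and thus well inside the target rate. For the correction term, $\lambda_0=n/s_n$ concentrates the Laplace spike on $|\mu|\lesssim s_n/n$, so $m_0(Y_i)$ is uniformly close to $\phi(Y_i)$ on compact $Y_i$-sets and the Bayes-factor ratio inside $N_{0,v}$ matches the Dirac-spike ratio up to a $1+o(1)$ factor; tail contributions to the $Z$-expectation are controlled by the Gaussian tail and the pointwise lower bound $N\geq 1$.

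\textbf{Signal coordinates.} For $\theta_i\neq 0$ I apply Lemma~\ref{lemma:decompose} directly. The leading term $\rho(\theta_i,\hat p_1)$ is the signal risk of a Bayesian LASSO predictive with penalty $\lambda_1$ calibrated exactly as in Theorem~\ref{thm:dirac_laplace}, and the signal bound of Theorem~\ref{lemma:risk_lasso_upper} dominates it by an $r$-dependent constant, which is absorbed into $\log(n/s_n)/(1+r)$. The correction $E\log N_{\theta_i,1}^{SS}(Z)-E\log N_{\theta_i,v}^{SS}(Z)$ I handle by the same two-regime split used for Theorem~\ref{thm:dirac_laplace}: conditioning on whether $|Y_i|\gtrless \lambda_1+\sqrt{2v\log(n/s_n)}$, the slab marginal either dominates (so $N^{SS}\approx 1$) or its contribution is swept into a bounded multiple of $\log(n/s_n)$. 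When $r\geq 1$ the slab penalty satisfies $\lambda_1^2\sim 1-v$ and is too mild to separate moderate signals from the spike on its own, which is precisely why the assumption $\min_i|\theta_i|>c\sqrt{\log(n/s_n)}$ with $c>2\sqrt{v}$ is imposed: since $c>2\sqrt{v}>\sqrt{2v}$, under the truth $Y_i$ exceeds the regime-switching threshold with overwhelming Gaussian probability, reducing the analysis to the regime already handled.

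\textbf{Main obstacle.} The principal difficulty is obtaining bounds on the ratio $m_0(Y_i)/m_1(Y_i)$ that are sharp enough to match the Dirac-spike rate, because this quotient is now a ratio of two Laplace-Gaussian convolutions rather than the Gaussian-to-Laplace ratio that was available in Theorem~\ref{thm:dirac_laplace}. Both marginals admit closed forms in terms of the Gaussian Mills ratio $R(\cdot)$, but the cancellations required are delicate: the ratio must be $1+o(1)$ on a high-probability set for $Y_i$, while simultaneously remaining integrable against $\phi(\cdot-\theta_i)$ on the atypical tail. I anticipate relying on the novel upper bound on Laplace-Gaussian marginals flagged in the abstract as a new ingredient developed for Section~\ref{sec:sparse_regression}, combined with a truncation of the outer $Z$-expectation onto a high-probability ball plus a crude deterministic estimate on its complement, to push the correction terms below the target rate.
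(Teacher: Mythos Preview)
Your overall architecture---coordinate-wise split, the dual risk decompositions of Lemma~\ref{lemma:decompose} and Remark~\ref{rem:decompose}, and a regime split for signals---matches the paper. The concrete steps, however, differ and your noise argument has a real gap. Reducing to the Dirac analysis via ``$m_0\approx\phi$, so the Bayes-factor ratio matches the Dirac-spike ratio up to $1+o(1)$'' does not give the rate you need: once $m_0$ is replaced by $\phi$ you are left with the Dirac expression at \emph{fixed} prior odds $\eta/(1-\eta)=1/c$, not at odds $s_n/n$ as in Theorem~\ref{thm:dirac_laplace}, and the Dirac noise bound \eqref{eq:rho0} then yields only $\rho(0,\hat p)<1/c$, a constant; summed over $n-s_n$ zeros this is $O(n)$, not $O(s_n)$. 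The paper does not attempt any Dirac reduction. It bounds $E_{x\mid\theta=0}\bigl[m_1(x)/m_0(x)\bigr]$ directly by a four-region split of the $x$-axis (at $0$ and $\pm\lambda_0$), applying the Mills-ratio inequalities of Lemma~\ref{lem:mills} to each of $m_0$ and $m_1$ separately, and feeds this into $E\log\wt N^{SS}_{0,1}\leq \log\bigl(1+\tfrac{\eta}{1-\eta}E[m_1/m_0]\bigr)$.

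For signals the paper also departs from your plan. It does not reuse the $|Y_i|$-threshold of Theorem~\ref{thm:dirac_laplace} but instead thresholds the ratio $R_{\lambda_0,\lambda_1}(z)=\bigl[I_1^1(\lambda_0)+I_2^1(\lambda_0)\bigr]\big/\bigl[I_1^1(\lambda_1)+I_2^1(\lambda_1)\bigr]$ at level $A(s_n/n)^{2v}$ to obtain the mixed bound \eqref{eq:essential}. A separate lemma shows $R_{\lambda_0,\lambda_1}\leq R(\lambda_0)/R(\lambda_1)\lesssim s_n/n$ uniformly, so when $0<r<1$ (i.e.\ $2v<1$) the ``bad'' set $A_\eta^c$ is \emph{empty} and the signal risk collapses to the slab LASSO risk plus a correction $\lesssim(1-v)\log(n/s_n)$; this is precisely why no beta-min hypothesis is needed for $r<1$, a mechanism your proposal does not identify. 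For $r\geq1$ the same lemma localizes $A_\eta^c\subset\{|x|<\lambda_1+\sqrt{4v\log(n/s_n)}\}$ and the beta-min assumption drives $P(A_\eta^c)\lesssim(s_n/n)^2$, after which a pointwise bound on the spike KL $K_{\lambda_0}$ controls the residual. Finally, the Laplace marginal-likelihood bound from the regression section is a multivariate device for $\pi_{S_0,\lambda}$ and plays no role here; only univariate Mills-ratio estimates are used.
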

\proof Section \ref{sec:proof_thm:ssl}.

\begin{figure}
\scalebox{0.3}{\includegraphics{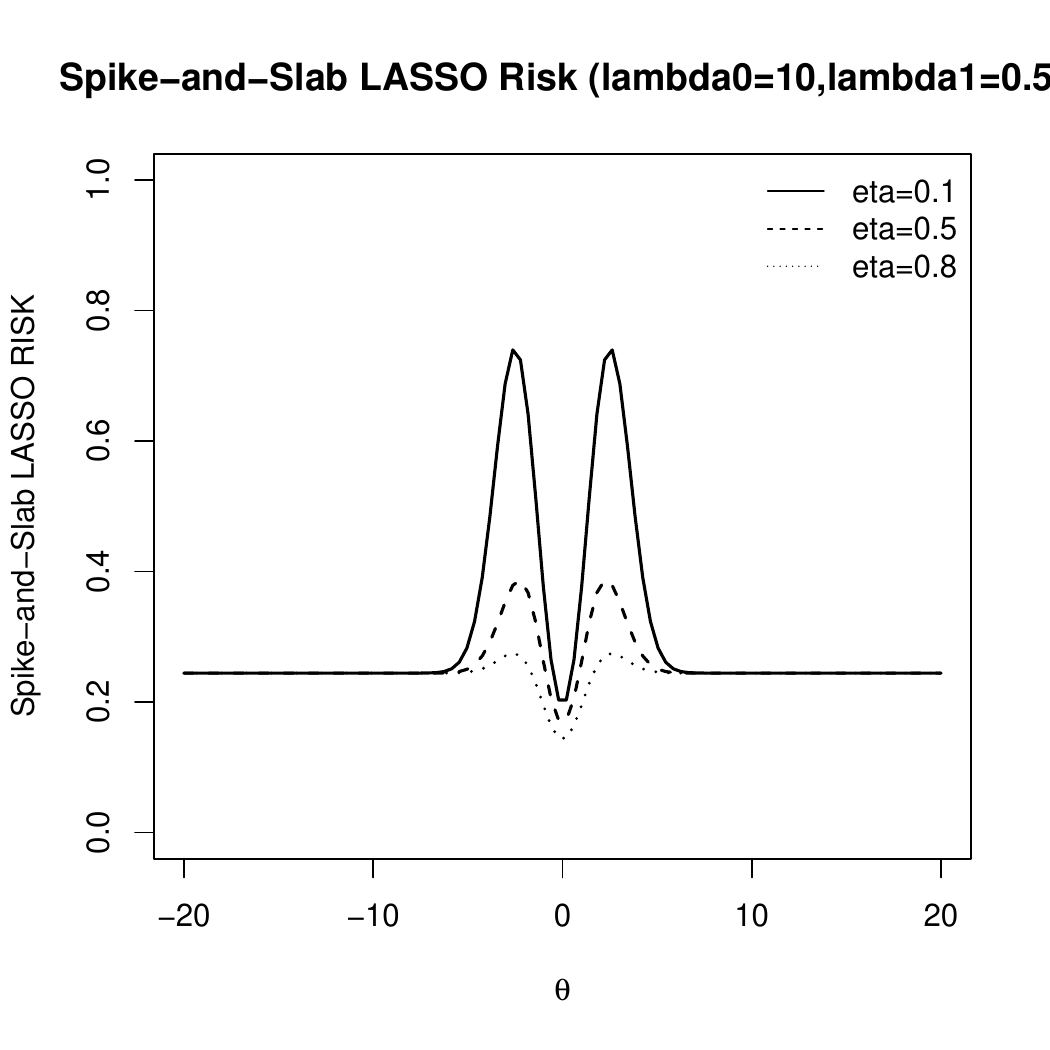}}
\scalebox{0.3}{\includegraphics{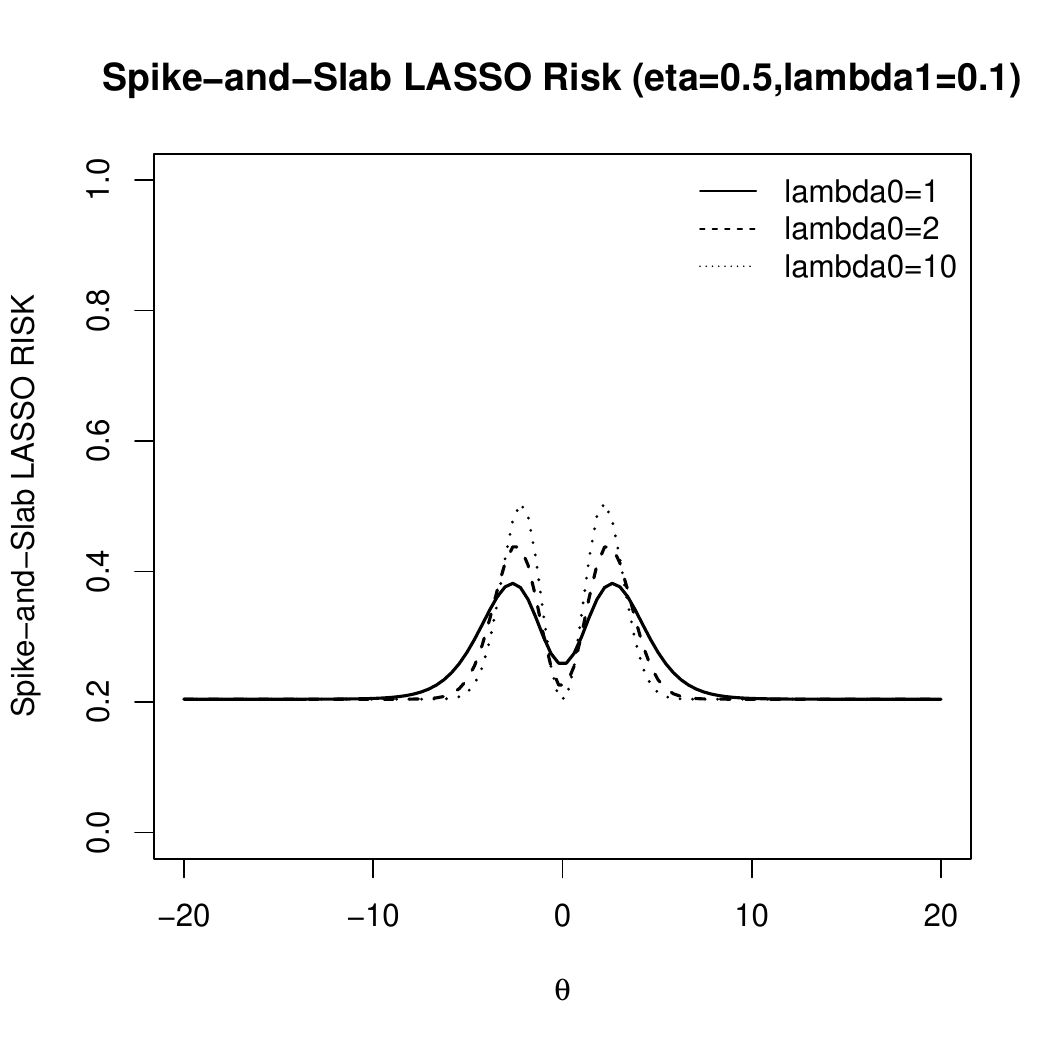}}
\scalebox{0.3}{\includegraphics{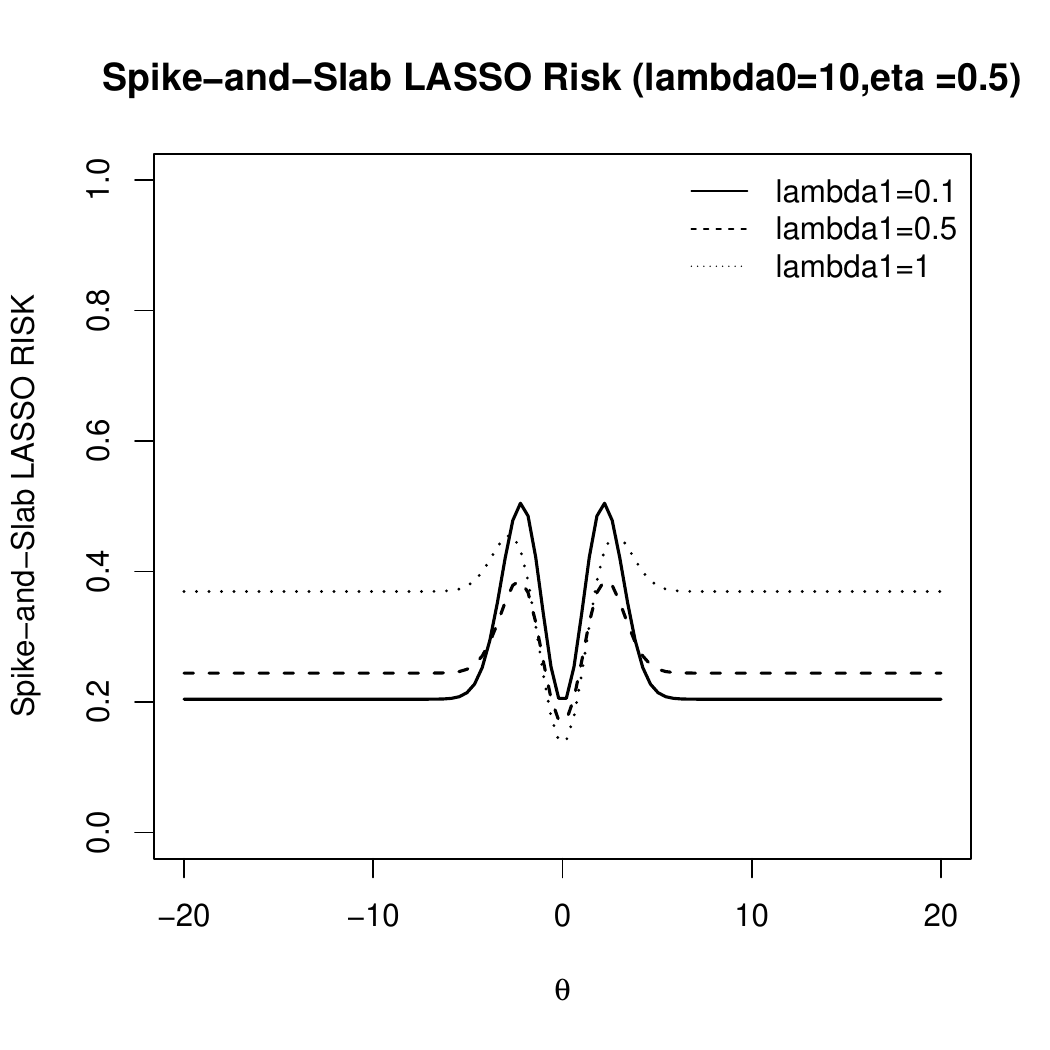}}
\caption{\small Spike-and-Slab LASSO prediction risk for various calibrations. (Left) Varying $\eta$ for fixed $\lambda_0=10,\lambda_1=0.5$; (Middle) Varying $\lambda_0$ for fixed $\eta=0.1,\lambda_1=0.1$; (Right) Varying $\lambda_1$ for fixed $\eta=0.1,\lambda_0=10$}\label{fig:ssl}
\end{figure}

\begin{remark}(Calibration)
\citet{rockova} concluded that $\lambda_0/\lambda_1\times (1-\eta)/\eta$ should behave as $(n/s_n)^c$ for some $c\geq 1$ to yield rate-minimaxity in estimation.  
Figure \ref{fig:ssl} (middle plot) shows that the risk at zero will be small when $\lambda_0$ is large, not necessarily only when $\eta$ is small (left plot). This observation is confirmed by the upper bound on $\rho(0,\hat p)$ in \eqref{eq:upper_bound_SSL_zerp}. Interestingly, we {\em do not} require $(1-\eta)/\eta$ to be of the order $n/s_n$ as long as  $\lambda_0$ is increasing at a rate $n/s_n$. If we assumed $(1-\eta)/\eta=n/s_n$, the proof in  Section \ref{sec:proof_thm:ssl} shows that  the calibration $\lambda_0=n/s_n$ would yield the optimal rate only for $r\in (0,1)$. For general $r\in(0,\infty)$ we need to fix $(1-\eta)/\eta$ and let $\lambda_0$ be of the order $n/s_n$. In terms of $\lambda_1$, we have the same assumption as in Theorem \ref{thm:dirac_laplace}, where $\lambda_1$ is set proportional to either $\sqrt{v}$ or $\sqrt{1-v}$. The incentive is to keep it small so that the risk for large effects is small, but not too small so that the supremum over $\theta\neq 0$ is not too large. For $r>1$, we require a signal-strength assumption that is similar to the one in \cite{rockova}.
\end{remark}

\subsection{Hierarchical Spike-and-Slab Priors}\label{sec:hierarchical}
 All results in \cite{muk1,muk2} are {\em non-adaptive}, i.e. the knowledge of $s_n$ is required to construct or tune the estimator in order to obtain minimax performance.  While we consider only rate-minimax performance, there are reasons to be hopeful that hierarchical priors (which do achieve automatic adaptation in parameter estimation) will yield an adaptive rate for predictive density estimation. One recent remarkable example is supremum-norm adaptation (without any log factors) of hierarchical spike-and-slab priors  in non-parametric wavelet regression  \cite{hoffmann}. We inquire into adaptability of hierarchical priors   in the context of predictive inference.

We continue our investigation of the Dirac spike and Laplace slab prior from Section \ref{sec:dirac_laplace}.  However, now we assume a {\em hierarchical} version  (not an independent product), i.e.
\begin{equation}\label{eq:hierarchical}
\pi(\theta)=\int_{\eta}\prod_{i=1}^n[(1-\eta)\delta_0+\eta \pi_1(\theta_i)]\pi(\eta)\d\eta\quad\text{and}\quad \pi(\eta)\sim Beta(a,b)
\end{equation}
for some $a,b>0$.
In Section \ref{sec:separable}, we saw how independent product Spike-and-Slab mixtures yield {\em within-coordinate} mixing, adaptively borrowing from both the spike and the slab.
The hierarchical prior \eqref{eq:hierarchical} performs an additional {\em across-coordinate} mixing, borrowing strength and transmitting sparsity information. This point was highlighted in the  estimation context   under the squared error loss \cite{RG14b,rockova}.  The posterior predictive distribution under the hierarchical prior is no longer an independent product but a scale mixture of two-point mixtures. Indeed, using Lemma \ref{lemma:risk_mixture}, we have
\begin{align}
\hat p( \wt Y\C  Y)
&=\int_\eta\prod_{i=1}^n\left[\Delta_\eta( Y_i)\hat p_1( \wt Y_i\C  Y_i)+(1-\Delta_\eta( Y_i))\hat p_0( \wt Y_i\C  Y_i)\right]\d\pi(\eta),\label{eq:posterior_predictive_mixture}
\end{align}
where $\hat p_1$ and $\hat p_0$ are the slab and spike univariate posterior predictive densities, 
$\Delta_\eta( Y_i)$
is the mixing weight \eqref{eq:thetax} and where $m_1$ and $m_0$ are the marginal likelihoods defined in Lemma \ref{lemma:risk_mixture}.
It is also useful to rewrite the predictive distribution as an average predictive density $\hat p( \wt Y\C Y,\eta)$ where the average is taken over the posterior $\pi(\eta\C  Y)$, i.e.
\begin{equation}
\hat p( \wt Y\C  Y)= E_{\eta\C Y}\hat p( \wt Y\C Y,\eta)\label{eq:mixture_rep}.
\end{equation}
This key property is utilized in the following lemma which allows us to bound the KL loss.   
 
\begin{lemma}\label{lemma:KL_average}
The Kullback-Leibler loss of the predictive distribution under the hierarchical prior \eqref{eq:hierarchical} satisfies
$
L(\theta,\hat p(\cdot\C  Y))\leq E_{\eta\C  Y} L(\theta,\hat p(\cdot\C  Y,\eta)).
$
\end{lemma}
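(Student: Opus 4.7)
The plan is to exploit the mixture representation \eqref{eq:mixture_rep}, which writes $\hat p(\widetilde Y\mid Y)$ as a posterior average of conditional predictive densities $\hat p(\widetilde Y\mid Y,\eta)$, and then apply Jensen's inequality to the concave function $\log$. This is really just the standard convexity of the KL divergence in its second argument, adapted to the hierarchical prior setting.

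Expanding the definition \eqref{eq:KL_loss} gives
\[
L(\theta,\hat p(\cdot\mid Y)) = \int \pi(\widetilde Y\mid\theta)\log\pi(\widetilde Y\mid\theta)\,d\widetilde Y - \int \pi(\widetilde Y\mid\theta)\log\hat p(\widetilde Y\mid Y)\,d\widetilde Y.
\]
Substituting \eqref{eq:mixture_rep} into the second term and applying Jensen's inequality to the concave function $\log$ under the posterior $\pi(\eta\mid Y)$ yields
\[
-\log\hat p(\widetilde Y\mid Y) = -\log E_{\eta\mid Y}\hat p(\widetilde Y\mid Y,\eta) \leq -E_{\eta\mid Y}\log\hat p(\widetilde Y\mid Y,\eta).
\]
Multiplying by $\pi(\widetilde Y\mid\theta)\geq 0$, integrating over $\widetilde Y$, and swapping the order of the $\widetilde Y$-integral and the $\eta\mid Y$-expectation (Fubini, valid since the integrand after taking the positive part of $-\log$ is nonnegative and the entropy term is finite under the Gaussian likelihood) produces
\[
L(\theta,\hat p(\cdot\mid Y)) \leq E_{\eta\mid Y}\int \pi(\widetilde Y\mid\theta)\log\frac{\pi(\widetilde Y\mid\theta)}{\hat p(\widetilde Y\mid Y,\eta)}\,d\widetilde Y = E_{\eta\mid Y}L(\theta,\hat p(\cdot\mid Y,\eta)),
\]
which is the desired bound.

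The only non-routine point is the Fubini step: one must check that $E_{\eta\mid Y}|\log\hat p(\widetilde Y\mid Y,\eta)|$ is integrable against $\pi(\widetilde Y\mid\theta)$. This follows because $\hat p(\widetilde Y\mid Y,\eta)$ is a finite mixture of the spike and slab posterior predictives (each of which is a shifted Gaussian, hence has $-\log\hat p$ growing at most quadratically in $\widetilde Y$), so the integrand is dominated uniformly in $\eta$ by a quadratic in $\widetilde Y$ whose expectation under the Gaussian $\pi(\widetilde Y\mid\theta)$ is finite. Aside from this verification, the argument is a one-line application of Jensen.
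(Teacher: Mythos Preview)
Your proof is correct and follows essentially the same route as the paper: use the mixture representation \eqref{eq:mixture_rep} and apply Jensen's inequality to the concave $\log$ inside the KL integral. The paper's argument is the same one-line Jensen step; your added Fubini justification is more careful than what the paper provides but is not required there.
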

\proof
This follows from rewriting  the posterior predictive distribution \eqref{eq:posterior_predictive_mixture}  as \eqref{eq:mixture_rep} and by applying  Jensen's inequality $E\log X<\log EX$ to obtain
\begin{align}
L(\theta,\hat p(\cdot\C  Y))
&=\int \pi( \wt Y\C\theta)\log \pi( \wt Y\C \theta) - \int \pi( \wt Y\C\theta)\log E_{\eta\C  Y} \hat p( \wt Y\C  Y,\eta) \d Y\nonumber\\
&\leq E_{\eta\C  Y} L(\theta,\hat p(\cdot\C  Y,\eta)).\label{eq:bound_KL_sep}
\end{align}

Lemma \ref{lemma:KL_average} shows that it is the conditional distribution $\pi(\eta\C Y)$ which drives the prediction performance. This posterior is expected to be concentrated around zero for sparse situations such as ours. In fact, we would expect that the posterior   $\pi(\eta\C Y)$ will carry important information regarding the sparsity level $s_n$.  To fortify this intuition, the following Lemma shows that the risk of the hierarchical prior is determined by the typical posterior mean of the log-odds $(1-\eta)/\eta$ of a spike versus a slab and its reciprocal.
\begin{lemma}\label{lemma:risk_hierarchical}
The prediction risk under the   hierarchical prior \eqref{eq:hierarchical} satisfies  for $\lambda>2$
 \begin{align*}
\rho(\theta,\hat p)\leq& s_n\left\{C( \lambda,v)+ (1-v)\left[E_{ Y\C\theta} E \log\left(\frac{1-\eta}{\eta}\C Y\right)\right]\right\}\\
&+D(n-s_n)\sup_{i:\theta_i\neq 0} E_{ Y_{\backslash i}\C\theta}  E\left(\frac{\eta}{1-\eta}\C Y_{\backslash i}\right). 
\end{align*}
for a suitable constant $C( \lambda,v)>0$ defined in \eqref{eq:Clambda} and $D=1+2/(a-1)$, where $ Y_{\backslash i}$ denotes the vector $ Y$ without the $i^{th}$ coordinate.
\end{lemma}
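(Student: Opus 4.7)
My plan rests on the mixture representation \eqref{eq:mixture_rep} together with Lemma \ref{lemma:KL_average}, which together give
$$
\rho(\theta,\hat p)\leq E_{Y\C\theta}\,E\bigl[\,L(\theta,\hat p(\cdot\C Y,\eta))\,\big|\,Y\bigr].
$$
Conditionally on $\eta$, the hierarchical prior \eqref{eq:hierarchical} collapses to the separable Dirac-spike-Laplace-slab prior analyzed in Section \ref{sec:dirac_laplace}, so the inner KL loss splits coordinate-wise as $\sum_{i=1}^n L(\theta_i,\hat p(\cdot\C Y_i,\eta))$. The remaining task is to bound each univariate risk and to sum separately over the signal set $\{i:\theta_i\neq 0\}$ and the noise set $\{i:\theta_i=0\}$, taking the outer expectations at the end.

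\textbf{Signal coordinates.} For $\theta_i\neq 0$ I apply the slab-centered decomposition of Lemma \ref{lemma:decompose} in combination with the Laplace-slab calculations already developed inside the proof of Theorem \ref{thm:dirac_laplace}. The prior odds $(1-\eta)/\eta$ enter $N_{\theta,v}^{SS}$ linearly, so after the Gaussian integration against $Z\sim N(0,1)$ (using the Mills-ratio bounds exactly as in the signal case of that proof) the univariate risk is controlled by $C(\lambda,v)+(1-v)\log((1-\eta)/\eta)$, with $C(\lambda,v)$ as in \eqref{eq:Clambda}. Summing this bound over at most $s_n$ signal coordinates and taking $E_{Y\C\theta}\,E(\cdot\C Y)$ by linearity produces the first bracketed term of the lemma.

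\textbf{Noise coordinates.} For $\theta_i=0$ I use the spike-centered variant from Remark \ref{rem:decompose}, in which $\eta/(1-\eta)$ multiplies the slab/spike marginal ratio. Applying $\log(1+x)\leq x$ and carrying out the Gaussian integral against the Laplace slab (using the hypothesis $\lambda>2$ to keep the slab tails finite, as in the signal-less case of the proof of Theorem \ref{thm:dirac_laplace}) controls each noise univariate risk by a constant multiple of $E[\eta/(1-\eta)\C Y]$. Summing over the $n-s_n$ noise coordinates yields a total contribution proportional to $(n-s_n)\,E_{Y\C\theta}\,E(\eta/(1-\eta)\C Y)$.

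\textbf{Main obstacle.} The hard step is recasting this last quantity as $D\sup_{i:\theta_i\neq 0} E_{Y_{\backslash i}\C\theta}\,E(\eta/(1-\eta)\C Y_{\backslash i})$. I would peel off a single \emph{signal} coordinate $Y_i$ via the factorization
$$
\pi(\eta\C Y)\propto \pi(\eta\C Y_{\backslash i})\,\bigl[\eta\, m_1(Y_i)+(1-\eta)\,m_0(Y_i)\bigr],
$$
and bound the resulting ratio of Beta moments of $\eta/(1-\eta)$ before and after that single posterior update. A direct Beta$(a,b)$-moment computation (requiring $a>1$ so that $E[\eta/(1-\eta)]$ is finite under the prior) should show that reincorporating one additional observation inflates $E(\eta/(1-\eta)\C Y_{\backslash i})$ by at most $D=1+2/(a-1)$, uniformly in the realized $Y_i$. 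Because this bound holds for every signal index $i$, the supremum over signals in the lemma's statement absorbs the choice. The principal technical subtlety is precisely this uniform-in-$Y_i$ control of the Beta update ratio; it is what both dictates the form of $D$ and forces the hypothesis $a>1$ to be implicit in $D$.
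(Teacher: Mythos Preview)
Your overall architecture matches the paper: apply Lemma \ref{lemma:KL_average}, use separability of $L(\theta,\hat p(\cdot\C Y,\eta))$, split into signal and noise coordinates, and for signals reuse the bound from the proof of Theorem \ref{thm:dirac_laplace} to extract $C(\lambda,v)+(1-v)\log((1-\eta)/\eta)$. That part is fine.

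The gap is in your ``main obstacle'' paragraph. You propose peeling off a \emph{signal} coordinate $j$ to pass from $E(\eta/(1-\eta)\C Y)$ to $E(\eta/(1-\eta)\C Y_{\backslash j})$. But the quantity you are trying to bound for a noise index $i$ is $E_{Y\C\theta}\bigl[E(\eta/(1-\eta)\C Y)\cdot m_1(Y_i)/\phi(Y_i)\bigr]$: the factor $m_1(Y_i)/\phi(Y_i)$ lives on the \emph{noise} coordinate $i$, and $Y_{\backslash j}$ (with a signal $j$ removed) still contains $Y_i$. You therefore cannot decouple the two factors, and the step $\int m_1(x_i)\,\d x_i=1$ never becomes available. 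The paper instead removes the \emph{same noise coordinate} $i$: with $Y_{\backslash i}$ independent of $Y_i$, the expectation splits and the slab marginal integrates to one. (The supremum over $\{i:\theta_i\neq 0\}$ in the displayed statement appears to be a slip; the proof and the downstream use in Lemma \ref{lem:overshoot} are consistent with $\theta_i=0$.)

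Two smaller mismatches. First, the hypothesis $\lambda>2$ is not used ``to keep the slab tails finite''; it is what activates the sandwich inequality \eqref{eq:sandwich} from Lemma \ref{lemma:posterior_odds}, which is precisely the device yielding
\[
\frac{E(\eta/(1-\eta)\C Y)}{E(\eta/(1-\eta)\C Y_{\backslash i})}\ \le\ \frac{a+E[s_n(\mu)\C Y]+1}{a+E[s_n(\mu_{\backslash i})\C Y_{\backslash i}]}\ <\ 1+\frac{2}{a-1}=D.
\]
Second, the paper does not do a ``direct $\mathrm{Beta}(a,b)$-moment computation'' for $D$; it goes through the hypergeometric-ratio monotonicity of Lemma \ref{lemma:gauss_ratio} to get \eqref{eq:sandwich} and then compares the posterior sparsity means with and without one coordinate. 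Your proposed factorization $\pi(\eta\C Y)\propto\pi(\eta\C Y_{\backslash i})[\eta\,m_1(Y_i)+(1-\eta)m_0(Y_i)]$ is correct, but turning it into a uniform-in-$Y_i$ ratio bound still needs an argument of that type.
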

\proof Section \ref{sec:proof_lemma:risk_hierarchical}

\smallskip
This Lemma shows how hierarchical priors achieve improved rates compared to the default tuning $(1-\eta)/\eta=n$ for when $s_n$ is {\em not known}.
Lemma \ref{lemma:posterior_odds} below characterizes the behavior of the  posterior mean of prior model (spike/slab) odds.

\begin{lemma}\label{lemma:posterior_odds}
Assume the hierarchical Spike-and-Slab prior $\pi(\theta)$ in \eqref{eq:hierarchical} with $a,b>0$. Under the Gaussian model $Y\sim N_n(\theta, I)$, the posterior distribution $\pi(\eta\C  Y)$   satisfies  
$$
E\left(\frac{\eta}{1-\eta}\C Y\right)\leq \frac{a+E[s_n(\theta)\C  Y]+1}{b-1}\quad\text{and}\quad
E\left(\frac{1-\eta}{\eta}\C Y\right)\leq E\left(\frac{b+n}{s_n(\theta)+a-1 }\C  Y\right)
$$
where  $s_n(\theta)=\|\theta\|_0$. 
\end{lemma}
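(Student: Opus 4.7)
The plan is short: exploit conditional conjugacy of the Beta--Bernoulli structure hidden inside \eqref{eq:hierarchical}. Introduce latent inclusion indicators $\gamma_i=\mathbf{1}\{\theta_i\neq 0\}$, which are almost surely well defined because the Laplace slab $\pi_1$ is continuous. Under the hierarchical prior one then has the equivalent representation $\gamma_i\mid\eta\overset{\mathrm{iid}}{\sim}\mathrm{Bern}(\eta)$ with $\eta\sim\mathrm{Beta}(a,b)$, while $\theta_i=0$ on $\{\gamma_i=0\}$ and $\theta_i\sim\pi_1$ on $\{\gamma_i=1\}$. Note $s_n(\theta)=\|\theta\|_0=\sum_i\gamma_i$.

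Next I would use the fact that the Gaussian likelihood $\pi(Y\mid\theta)$ does not involve $\eta$, so $\eta\perp Y\mid\theta$. Combined with the observation that the Beta--Bernoulli posterior for $\eta$ given $\gamma$ depends on $\gamma$ only through its sum, this gives
$$
\pi(\eta\mid Y,\theta)=\pi(\eta\mid\gamma)=\mathrm{Beta}\bigl(a+s_n(\theta),\,b+n-s_n(\theta)\bigr).
$$
A direct computation of Beta moments (valid for $b>1$ and $a+s_n(\theta)>1$ respectively) then yields the closed forms
$$
E\!\left[\frac{\eta}{1-\eta}\,\Big|\,Y,\theta\right]=\frac{a+s_n(\theta)}{b+n-s_n(\theta)-1},\qquad E\!\left[\frac{1-\eta}{\eta}\,\Big|\,Y,\theta\right]=\frac{b+n-s_n(\theta)}{a+s_n(\theta)-1}.
$$

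Finally, by the tower property, both target quantities are obtained by averaging the right-hand sides against $\pi(\theta\mid Y)$. Since $0\leq s_n(\theta)\leq n$ deterministically, I may bound $b+n-s_n(\theta)-1\geq b-1$ (pulling the constant denominator out of the conditional expectation and using linearity in the numerator, with the harmless $+1$ in the statement serving as a cushion), and symmetrically $b+n-s_n(\theta)\leq b+n$ for the second inequality. The two claimed bounds follow immediately.

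The only real subtlety is the conditional-independence chain $\pi(\eta\mid Y,\theta)=\pi(\eta\mid\gamma)$, which explains why the posterior on the mixing weight collapses to a simple Beta update indexed by the sparsity count alone and is blind to the magnitudes of the signals. Once this is in place, everything reduces to a single moment identity for the Beta distribution and a trivial monotone bound; there is no real analytic obstacle.
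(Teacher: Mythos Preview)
Your argument is correct and considerably more direct than the paper's. The key identity you use---that $\pi(\eta\mid Y,\theta)=\mathrm{Beta}\bigl(a+s_n(\theta),\,b+n-s_n(\theta)\bigr)$---follows cleanly from the latent-indicator representation and the conditional independence $\eta\perp Y\mid\theta$, and the Beta moment formulas $E[\eta/(1-\eta)]=\alpha/(\beta-1)$ and $E[(1-\eta)/\eta]=\beta/(\alpha-1)$ then give \emph{exact} expressions for $E[\eta/(1-\eta)\mid\theta]$ and $E[(1-\eta)/\eta\mid\theta]$, from which the stated bounds follow by the trivial monotonicity $0\le s_n(\theta)\le n$ and the tower property.

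The paper takes a different path. It writes the conditional density $\pi(\mu\mid\eta)$ with the Dirac spike treated via the formal ratio $\pi_0(\mu_i)/\pi_1(\mu_i)$, which leaves an extra factor $[1+\eta(\lambda/2-1)]^{n-s}$ in the posterior for $\eta$ given $\mu$. The resulting conditional expectation is then expressed as a ratio of Gauss hypergeometric functions $F_2$, and the paper proves a separate monotonicity lemma (via a Chebyshev integral inequality) for such ratios in order to bound by the endpoint values. What this buys the paper is a framework that could in principle accommodate continuous spikes, where your clean conjugacy breaks down; but for the Dirac spike in \eqref{eq:hierarchical} your route is both shorter and avoids the delicate handling of $\delta_0/\pi_1$ densities. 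In fact your calculation shows the conditional expectations do not depend on $\lambda$ at all, so the hypergeometric machinery and the $\lambda>2$ condition appearing in the paper's sandwich bound \eqref{eq:sandwich} are unnecessary for this lemma.
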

\proof Section \ref{sec:proof_lemma:posterior_odds}.
\begin{remark}
This Lemma shows that the usual calibration \cite{castillo_vdv} with $a=1$ and $b=n+1$ implies
$
E\left(\frac{\eta}{1-\eta}\C Y\right)\lesssim E[ s_n(\theta)/n\C Y]$ and $ 
E\left(\frac{1-\eta}{\eta}\C Y\right)\lesssim E[n/s_n(\theta)\C  Y].
$
While we focused on upper bounds in Lemma \ref{lemma:posterior_odds}, one can easily show lower bounds as well implying that the order of these expectations is $s_n(\theta)/n$ and $n/s_n(\theta)$, respectively. In particular, for $\lambda>2$ we obtain   from \eqref{eq:sandwich} that  $E\left(\frac{\eta}{1-\eta}\C Y\right)> \frac{a+E[s_n(\theta\C  Y]}{b-1}$.
\end{remark}

Going back to Lemma \ref{lemma:risk_hierarchical},  it is crucial to understand the {\em typical} behavior of the posterior mean of the odds $(1-\eta)/\eta$ and $\eta/(1-\eta)$ under the model $ Y\sim N_n(\theta, I).$  The following Lemma utilizes the known  property of Spike-and-Slab priors that the posterior does not {\em overshoot} the true dimensionality $s_n$ by too much (Theorem 2.1 in \cite{castillo_vdv}).
\begin{lemma}\label{lem:overshoot}
Assume $ Y\sim N_n(\theta, I)$ and the hierarchical Spike-and-Slab prior \eqref{eq:hierarchical} with $a=1$ and $b=n+1$.
 Then for some suitable $M>0$ we have
$$
\sup_{\theta\in \Theta_n(s_n)}E_{ Y\C \theta}E\left(\frac{\eta}{1-\eta}\C Y\right)\leq  
Ms_n/n+o(1)\quad\text{as $n\rightarrow\infty$}.
$$
\end{lemma}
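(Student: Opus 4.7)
The plan is to reduce the claim to a straightforward posterior-dimensionality bound of the Castillo--van der Vaart type. First, I would invoke Lemma \ref{lemma:posterior_odds} with the suggested calibration $a=1$, $b=n+1$. The first inequality in that lemma then reads
\begin{equation*}
E\left(\frac{\eta}{1-\eta}\,\Big|\,Y\right)\leq \frac{1+E[s_n(\theta)\C Y]+1}{n} = \frac{E[s_n(\theta)\C Y]+2}{n},
\end{equation*}
so that taking $E_{Y\C\theta}$ on both sides and applying Fubini gives
\begin{equation*}
E_{Y\C\theta}E\left(\frac{\eta}{1-\eta}\,\Big|\,Y\right)\leq \frac{E_{Y\C\theta}E[s_n(\theta)\C Y]+2}{n}.
\end{equation*}
Hence everything is reduced to controlling the expected posterior mean of the dimensionality of the active set under the hierarchical Spike-and-Slab prior.

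Next, I would quote Theorem 2.1 of \citet{castillo_vdv} (the ``no overshoot'' result), which guarantees that with $a=1$ and $b=n+1$ the hierarchical Spike-and-Slab posterior charges models whose size exceeds a constant multiple of the true sparsity $s_n$ with vanishing probability, uniformly over $\theta\in\Theta_n(s_n)$. Concretely, their argument yields a constant $M_1>0$ (depending only on the slab density, which in our setting is the Laplace slab $\pi_1(\cdot\C\lambda)$ with the fixed $\lambda$ from Theorem \ref{thm:dirac_laplace}) such that
\begin{equation*}
\sup_{\theta\in\Theta_n(s_n)}E_{Y\C\theta}\,\Pi\bigl(\|\theta\|_0>M_1 s_n \,\big|\,Y\bigr)=o(1).
\end{equation*}
Since $s_n(\theta)\leq n$ deterministically, the expected posterior dimensionality splits as
\begin{equation*}
E_{Y\C\theta}E[s_n(\theta)\C Y]\leq M_1 s_n + n\cdot\sup_{\theta\in\Theta_n(s_n)}E_{Y\C\theta}\,\Pi\bigl(\|\theta\|_0>M_1 s_n \,\big|\,Y\bigr).
\end{equation*}
Combining the last two displays with the reduction above yields
\begin{equation*}
\sup_{\theta\in\Theta_n(s_n)} E_{Y\C\theta}E\left(\frac{\eta}{1-\eta}\,\Big|\,Y\right)\leq \frac{M_1 s_n}{n}+o(1)+\frac{2}{n},
\end{equation*}
which gives the claim with $M=M_1$ after absorbing the $O(1/n)$ term into the $o(1)$ remainder.

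The main obstacle I anticipate is verifying that the no-overshoot bound of \citet{castillo_vdv} is applicable verbatim to our Laplace slab. Their proof requires that the slab $\pi_1$ have sufficiently heavy tails and be bounded above, both of which the Laplace density satisfies for the fixed $\lambda$ given in Theorem \ref{thm:dirac_laplace}; one only has to re-derive the marginal-likelihood ratio bound used there with the Laplace slab, which is a routine Mills-ratio / Gaussian-convolution computation and is exactly the type of bound that appears as a subroutine in the proof of Theorem \ref{thm:dirac_laplace}. Once that bound is in hand the rest is purely a reorganization of Lemma \ref{lemma:posterior_odds}.
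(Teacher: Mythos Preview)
Your proposal is correct and follows essentially the same route as the paper: invoke Lemma~\ref{lemma:posterior_odds} to reduce to a bound on the posterior mean of $s_n(\theta)$, then split this expectation using the overshoot event and apply Theorem~2.1 of \citet{castillo_vdv}. The paper's proof differs only cosmetically (it verifies the exponential-decrease condition on the induced dimension prior rather than the slab condition you flag, and writes the final display with $a=2$), but the logic is identical.
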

\proof Section \ref{sec:proof_lem:overshoot}
 
\smallskip
Lemma  \ref{lem:overshoot} takes care of the ``noise" part of the risk bound in Lemma \ref{lemma:risk_hierarchical} where $(n-s_n)E_{ Y_{\backslash i}\C\theta}E[\eta/(1-\eta)\C Y_{\backslash i}]\lesssim s_n$.
In order to bound the ``signal" part of the risk bound, we need to show that $s_nE_{ Y\C\theta}E[ (1-\eta)/\eta\C Y]\lesssim s_n\log(n/s_n)$. 
From Lemma \ref{lemma:posterior_odds}, we need to make sure that the posterior $\pi(\eta\C Y)$ does not {\em undershoot} $s_n$ by too much. 
In   Lemma \ref{lem:no_miss} below, we show that when the true underlying signal is strong enough, the posterior does not miss {\em any} signal.
First, we define 
\begin{equation}
\Theta_n(s_n,\wt M)=\Theta_n(s_n)\cap\left\{\theta\in\R^n: \min_{i:\theta_i\neq 0} |\theta_i|>\wt M\sqrt{\log n}\right\}\label{eq:beta_min}.
\end{equation}
A similar (but stronger) signal strength condition was used in \cite{castillo_vdv2} in the context of high-dimensional regression with Spike-and-Slab priors. 
\begin{lemma}\label{lem:no_miss}
Assume $ Y\sim N_n(\theta, I)$ and the hierarchical Spike-and-Slab prior \eqref{eq:hierarchical} with $a=2$ and $b=n+1$. Denote with $S$ an index of all subsets of $\{1,\dots, n\}$ and define $c=(\wt M^2-2)/4$.  We have
$$
\sup_{\theta\in \Theta_n(s_n,\wt M)} P\big( \exists j\,\,\,\text{such that}\,\,\,  \theta_j\neq 0\quad\text{and}\quad j\notin S\C  Y\big)\leq \frac{C\e^{\lambda^2/2} s_n}{n^{c-1}}
$$
with probability at least $1-2/n$. Assume $\lambda>0$ such that $\lambda^2\leq 2 d\log n$ for some $d>0$. Then  for $c>2+d$ we  have
$$
\sup_{\theta\in \Theta_n(s_n,\wt M)} E_{ Y\C\theta}E\left(\frac{1-\eta}{\eta}\C Y\right)\lesssim n/s_n.
$$
\end{lemma}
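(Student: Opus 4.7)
The plan is to first bound, on a high-probability $Y$-event, the posterior probability that the sampled active set misses any true signal, and then to combine this with the odds bound from Lemma \ref{lemma:posterior_odds}. Marginalizing $\eta\sim\mathrm{Beta}(a,b)$ out of the prior yields the posterior over $S=\{j:\theta_j\neq 0\}$ (where here $\theta$ is the posterior draw):
$$\pi(S\mid Y)\propto B(a+|S|,b+n-|S|)\prod_{j\in S} m_1(Y_j)\prod_{j\notin S} m_0(Y_j),$$
with $m_1(Y_j)=\int\phi(Y_j-\mu)\pi_1(\mu)\,\d\mu$ and $m_0(Y_j)=\phi(Y_j)$. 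For any fixed coordinate $j$ that is a true signal, I would pair every $S\not\ni j$ with $S\cup\{j\}$ and invoke the elementary identity $B(a+|S|,b+n-|S|)/B(a+|S|+1,b+n-|S|-1)=(b+n-|S|-1)/(a+|S|)$, which is bounded by $n$ under $a=2,\,b=n+1$. Summing over the partition and then taking a union over the at most $s_n$ true signals yields
$$P(\exists j:\theta_j\neq 0,\, j\notin S\mid Y)\leq n s_n\max_{j:\theta_j\neq 0} m_0(Y_j)/m_1(Y_j).$$

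Next I would control this ratio using the closed form of the Laplace-Gaussian convolution,
$$m_1(Y_j)=\tfrac{\lambda}{2}e^{\lambda^2/2}\bigl[e^{-\lambda Y_j}\Phi(Y_j-\lambda)+e^{\lambda Y_j}\Phi(-Y_j-\lambda)\bigr],$$
whose dominant term gives $m_1(Y_j)\gtrsim\lambda\,e^{\lambda^2/2-\lambda|Y_j|}$ once $|Y_j|$ exceeds a fixed threshold, so that $m_0(Y_j)/m_1(Y_j)\lesssim\lambda^{-1}e^{-(|Y_j|-\lambda)^2/2}$. On the $Y$-event $\mathcal{A}=\{\max_{j:\theta_j\neq 0}|Z_j|\leq\sqrt{2\log n}\}$, which holds with $Y$-probability at least $1-2/n$ after a Gaussian tail and union bound (the factor $s_n$ being absorbed into the constant), the $\beta$-min condition gives $|Y_j|\geq(\wt M-\sqrt{2})\sqrt{\log n}$. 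Combining this with the elementary inequality $(|Y_j|-\lambda)^2\geq Y_j^2/2-\lambda^2$ and carefully tracking constants produces
$$\max_{j:\theta_j\neq 0} m_0(Y_j)/m_1(Y_j)\lesssim \lambda^{-1} e^{\lambda^2/2} n^{-c},\qquad c=(\wt M^2-2)/4,$$
which combined with the display above gives the desired posterior bound $Ce^{\lambda^2/2}s_n/n^{c-1}$.

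For the second assertion I would apply Lemma \ref{lemma:posterior_odds} with $a=2,\,b=n+1$ to obtain $E[(1-\eta)/\eta\mid Y]\leq E[(2n+1)/(s_n(\theta)+1)\mid Y]$. Splitting the outer expectation at $\mathcal{A}$ and further splitting the inner posterior expectation over the ``no-miss'' posterior event and its complement: on no-miss the posterior draw has $s_n(\theta)\geq s_n$, so $(2n+1)/(s_n+1)\lesssim n/s_n$; the complementary posterior mass is at most $Ce^{\lambda^2/2}s_n/n^{c-1}$, contributing $(2n+1)\cdot Ce^{\lambda^2/2}s_n/n^{c-1}\lesssim n/s_n$ under $\lambda^2\leq 2d\log n$ and $c>2+d$; and on $\mathcal{A}^c$ the crude bound $E[(1-\eta)/\eta\mid Y]\leq 2n+1$ times $P(\mathcal{A}^c)\leq 2/n$ contributes $O(1)\lesssim n/s_n$. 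Summing these three pieces establishes $\sup_{\theta\in\Theta_n(s_n,\wt M)} E_{Y\mid\theta} E[(1-\eta)/\eta\mid Y]\lesssim n/s_n$.

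The main obstacle is the second paragraph: obtaining a lower bound on $m_1(Y_j)$ tight enough to expose the $e^{\lambda^2/2}$ prefactor and to pin down the exact exponential decay $n^{-c}$ with $c=(\wt M^2-2)/4$. The role of the hypothesis $c>2+d$ is then transparent: it must absorb the Beta-ratio factor $n$ and the Laplace-convolution factor $e^{\lambda^2/2}\leq n^d$ against the $\beta$-min saving $n^{-c+1}$, leaving the final rate at $n/s_n$.
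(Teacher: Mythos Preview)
Your argument mirrors the paper's: pair each $S\not\ni j$ with $S^+=S\cup\{j\}$, bound the posterior ratio by (Beta prior ratio $\lesssim n$)$\times(m_0(Y_j)/m_1(Y_j))$, control the latter via $(|Y_j|-\lambda)^2$ on a high-probability noise event using the beta-min assumption, union-bound over the $s_n$ true signals, and then combine the no-miss conclusion with Lemma~\ref{lemma:posterior_odds} for the second claim. Two small slips: (i) your event $\{\max_{j:\theta_j\neq0}|Z_j|\le\sqrt{2\log n}\}$ has probability $1-O(s_n/n)$, not $1-2/n$---the $s_n$ cannot be ``absorbed into the constant''; the paper instead takes $\{\max_{1\le i\le n}|\epsilon_i|\le 2\sqrt{\log n}\}$ over \emph{all} $n$ coordinates, which gives $1-2/n$ directly and, via the chain $(|Y_j|-\lambda)^2>|Y_j|^2/2-\lambda^2>\theta_j^2/4-\epsilon_j^2/2-\lambda^2$, the stated exponent $c$; (ii) for the second assertion the paper in fact bounds $E_{Y\mid\theta}E[\log((1-\eta)/\eta)\mid Y]\lesssim\log(n/s_n)$, which is what Lemma~\ref{lemma:risk_hierarchical} actually needs---your odds-bound route leaves a miss-contribution $\lesssim s_n/n^{c-2-d}$, which is $o(1)$ but not automatically $\lesssim n/s_n$ for every sparsity regime.
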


\proof Section \ref{sec:proof_lem:no_miss}

\smallskip
Combining all the pieces, Theorem  \ref{thm:adaptive} below  characterizes rate-minimax performance of  the hierarchical Spike-and-Slab prior when the signals are large enough (i.e. over the parameter space \eqref{eq:beta_min}). The near-minimax performance is guaranteed over the entire parameter space $\Theta_n(s_n)$.
\begin{theorem}\label{thm:adaptive}
Assume the hierarchical prior \eqref{eq:hierarchical} with a Laplace slab \eqref{eq:lasso_prior} and with $a=2$ and $b=n+1$. 
Choose $\lambda^2=vC_r$ for $C_r>2/[v(1/2+4)]$ such that $\lambda >2$ when $0<r<1$ and  $\lambda^2=(1-v)C^*_r$  for $C^*_r>2/[5(1-v)]$ such that $\lambda>2$ when $r\geq 1$. 
Denote $c=(\wt M^2-2)/4$  where $\wt M$ is the signal-strength constant in \eqref{eq:beta_min} then we have for   $c>2$
$$
\sup_{\theta\in \Theta_n(s_n,\wt M)} \rho(\theta,\hat p)\lesssim \frac{s_n}{r+1}\log (n/s_n)\quad\text{and}\quad \sup_{\theta\in \Theta_n(s_n )} \rho(\theta,\hat p)\lesssim \frac{s_n}{r+1}\log (n).
$$
\end{theorem}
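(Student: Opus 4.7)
The strategy is to feed the bounds of Lemma \ref{lem:overshoot} and Lemma \ref{lem:no_miss} into the master risk decomposition of Lemma \ref{lemma:risk_hierarchical}. With $a=2$ and $b=n+1$, the constant in Lemma \ref{lemma:risk_hierarchical} is $D=1+2/(a-1)=3$, and $C(\lambda,v)$ is a constant in $n$ under the prescribed calibration of $\lambda$ (treating the two regimes $r\in(0,1)$ and $r\geq 1$ uniformly, since in both $\lambda$ is an $O(1)$ quantity depending only on $r$). Writing $T_s(\theta)= E_{Y|\theta} E\log\!\big((1-\eta)/\eta\,|\,Y\big)$ and $T_n(\theta)=\sup_{i:\theta_i\neq 0} E_{Y_{\backslash i}|\theta} E\big(\eta/(1-\eta)\,|\,Y_{\backslash i}\big)$, Lemma \ref{lemma:risk_hierarchical} becomes
\begin{equation*}
\rho(\theta,\hat p)\;\leq\; s_n\bigl[C(\lambda,v)+(1-v)\,T_s(\theta)\bigr]+3(n-s_n)\,T_n(\theta),\qquad 1-v=\tfrac{1}{r+1}.
\end{equation*}

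The noise contribution is the same for both parameter spaces. Although Lemma \ref{lem:overshoot} is stated with $a=1$, the argument applies verbatim with $a=2$ after invoking Lemma \ref{lemma:posterior_odds}, which yields $E(\eta/(1-\eta)\,|\,Y)\leq (E[s_n(\theta)\,|\,Y]+3)/n$, together with the standard no-overshoot bound $E_{Y|\theta}E[s_n(\theta)\,|\,Y]\lesssim s_n$ of \cite{castillo_vdv}. Hence $\sup_{\theta\in\Theta_n(s_n)} T_n(\theta)\lesssim s_n/n$ and the second summand above is $O(s_n)$, which is a lower-order term compared to the targeted rate once $\log(n/s_n)\to\infty$.

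For the signal contribution on $\Theta_n(s_n,\wt M)$, two applications of Jensen's inequality give
\begin{equation*}
T_s(\theta)\;\leq\;\log E_{Y|\theta}E\bigl((1-\eta)/\eta\,|\,Y\bigr),
\end{equation*}
and Lemma \ref{lem:no_miss} (whose hypothesis $c>2$ is exactly what the statement imposes) bounds the inner expression by a multiple of $n/s_n$ uniformly over $\Theta_n(s_n,\wt M)$. Thus $T_s(\theta)\leq \log(n/s_n)+O(1)$, and combining yields the minimax bound $\rho(\theta,\hat p)\lesssim \tfrac{s_n}{r+1}\log(n/s_n)$. For the signal contribution on the full $\Theta_n(s_n)$, signal strength fails, so we drop Lemma \ref{lem:no_miss} and use the worst-case bound from Lemma \ref{lemma:posterior_odds}: with $a=2,b=n+1$, $E((1-\eta)/\eta\,|\,Y)\leq E[(2n+1)/(s_n(\theta)+1)\,|\,Y]\leq 2n+1$, so by Jensen $T_s(\theta)\leq \log(2n+1)\lesssim \log n$, giving the near-minimax bound $\rho(\theta,\hat p)\lesssim \tfrac{s_n}{r+1}\log n$.

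The main obstacles are minor and essentially bookkeeping. First, one must verify that Lemma \ref{lem:overshoot} extends from $a=1$ to $a=2$; this is straightforward since the prior $\pi(\eta)\propto \eta(1-\eta)^n$ with $a=2$ still concentrates near zero and the Castillo-van der Vaart no-overshoot argument goes through. Second, one must track that $C(\lambda,v)$ remains bounded in $n$ under both calibrations $\lambda^2=vC_r$ and $\lambda^2=(1-v)C_r^\ast$ so that the $s_n C(\lambda,v)$ term is absorbed into the leading rate for any fixed $r\in(0,\infty)$. Apart from these verifications, the proof is a direct assembly of the preceding lemmas.
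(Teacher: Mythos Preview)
Your proposal is correct and follows essentially the same assembly as the paper's own proof, which simply cites Lemmas \ref{lemma:risk_hierarchical}, \ref{lemma:posterior_odds}, \ref{lem:overshoot}, and \ref{lem:no_miss} and notes that the chosen calibrations keep $C(\lambda,v)$ bounded. Your added step of applying Jensen twice to pass from $E_{Y|\theta}E[\log((1-\eta)/\eta)\mid Y]$ to $\log E_{Y|\theta}E[(1-\eta)/\eta\mid Y]$ is a clean way to invoke the second conclusion of Lemma \ref{lem:no_miss} as stated; the paper's proof of that lemma in fact already derives the log-odds bound directly, so this is a cosmetic rather than substantive difference.
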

 \proof 
The proof follows directly from  Lemma \ref{lemma:risk_hierarchical}, \ref{lemma:posterior_odds}, \ref{lem:overshoot} and \ref{lem:no_miss}. The proposed calibrations ascertain that 
the term $C( \lambda,v)$ defined in \eqref{eq:Clambda} is suitably small.

\smallskip

Theorem \ref{thm:adaptive} establishes that  rate-minimax predictive performance is achievable with hierarchical mixture priors that {\em do not} require the knowledge of $s_n$. There is no other rate-adaptive predictive density result in the literature so far. While we considered a fixed $\lambda$ regime (as $n\rightarrow\infty$), for near-minimaxity (with a log factor $\log n$ instead of $\log (n/s_n)$) we could allow having $\lambda$ increase at a rate $\sqrt{\log n}$ (rescaled by $r$). Another possible  strategy to achieve adaptation would be via  empirical Bayes  or via a two-step procedure, plugging in an estimator of $s_n/n$ in place of $\eta$. We have seen from Lemma \ref{lemma:risk_hierarchical} that hierarchical priors perform this plug-in automatically. Theorem \ref{thm:adaptive} nicely complements adaptive rates of estimation results under the squared error loss for Spike-and-Slab priors obtained earlier by  \citet{castillo_vdv} (Theorem 2.2) or \citet{rockova}.  The next section extends some of the findings to the more practical case of high-dimensional regression.

\section{High-dimensional Sparse Regression} \label{sec:sparse_regression}
Predictive inference in a low-dimensional (sparse) regression was studied by \cite{george_xu} who established sufficient conditions for minimaxity and dominance of a Bayesian estimator under the uniform prior over a plug-in estimator. Here, we consider a high-dimensional scenario $p>n$ with subset selection uncertainty and an {\em unknown} and possibly diverging sparsity level $s_n$. Rather than targeting the exact minimax risk, we will establish risk rates for the Spike-and-Slab prior with either a uniform slab (mirroring the setup in \cite{george_xu}) as well as a Laplace slab.
We observe $Y=(Y_1,\dots, Y_n)'$ from 
$$
Y_i=X_i'\beta_0+\varepsilon_i\quad\varepsilon_i\iid N(0,1)
$$
where $X_i$ is a vector of $(p\times 1)$ covariate values and where $\beta_0\in\Theta_p(s_n)$, i.e. $\|\beta_0\|_0\leq s_n$, and  $p>n$.
The goal is to predict $\wt Y=(\wt Y_1,\dots, \wt Y_m)'$ where
$$
\wt Y_i=\wt X_i'\beta_0+\wt \varepsilon_i\quad\wt\varepsilon_i\iid N(0,1),
$$
where $\wt X_i$ is a vector of the same  $p$ covariates  but with possibly different values and  where $Y$ and $\wt Y$ are conditionally independent given $\beta_0\in\R^p$. Throughout this section, we assume that the columns of $X$ and $\wt X$ have been suitably normalized so that $\|X^j\|_2=\sqrt{n}$ and  $\|\wt X^j\|_2=\sqrt{m}$. We will also denote $\|X\|=\sup_{1\leq j\leq p}\|X^j\|_2$.
The Spike-and-Slab prior is underpinned by a subset indicator $S\subset\{1,\dots,p\}$ and can be written (for some $0\leq R\leq p$) as
\begin{equation}
 \pi(\beta)= \sum_{s=0}^R\pi(s)\sum_{S:|S|=s}\frac{1}{{p\choose s}}\times \pi_{S,\lambda}(\beta_S)\otimes_{i\notin S}\delta_0(\beta_i)\label{eq:SS_reg}
\end{equation}
where $\pi_{S,\lambda}(\beta_S)$ is the slab portion for the subset $\beta_S$ of active coefficients $\beta$ inside $S$ and where (as in Assumption 1 in \cite{castillo_vdv2}) for some $A_1,A_2,A_3,A_4>0$
 \begin{equation}
 A_1p^{-A_3}\pi(s-1)\leq \pi(s)\leq A_2p^{-A_4}\pi(s-1),\quad s=1,\dots, p\label{eq:prior_dim}.
 \end{equation}
 The predictive density is a mixture of conditional predictive densities, given $S$, mixed by the posterior $\pi(S\C Y)$ and can be written as (see e.g. (37) in \cite{george_xu})
\begin{equation}
\hat p(\wt Y\C Y )= \sum_{S}\pi(S\C Y) \hat p(\wt Y\C Y,S).\label{eq:multiple_shrinkage}
\end{equation}
We denote with $S_\beta=\{i:\beta_i\neq 0\}$ the support of a vector $\beta\in\R^p$. When the posterior concentrates on $S_0=S_{\beta_0}$, the posterior predictive density will be dominated by $\hat p(\wt Y\C Y,S_0)$ for which the uniform prior dominates the plug-in density estimator $\pi(\wt Y\C\hat \beta_{MLE})$ and for which the scaled harmonic prior will be minimax when $|S_0|\geq 3$ \cite{george_xu}. We leverage model selection consistency to obtain predictive risk bounds in high-dimensional scenarios $p>n$ where both $S_0$ and $s_n$ are unknown. First, we find a convenient characterization  of the KL risk.
Denote 
\begin{equation}
\Delta_{n,\beta,\beta_0}(Y,X)\equiv \frac{\pi(Y\C \beta)}{\pi(Y\C \beta_0)}=\e^{-1/2\|  X (\beta-\beta_0)\|^2_2+( Y-  X\beta_0)'  X(\beta-\beta_0)}\label{eq:delta}
\end{equation}
the normalized likelihood  and its marginal version by
$$
\Lambda_{n,\beta_0,\pi}(Y,X)=\int_{\beta}\Delta_{n,\beta,\beta_0}(Y,X)\pi(\beta)\d\beta.
$$
This notation  helps us compactly characterize the KL prediction risk.

\begin{lemma}\label{lemma:reg_risk}
 Writing $\bar X=(X',\wt X')'$ and $Z=(Y',\wt Y)'$, the KL predictive risk $\rho_{n,m}(\beta_0, \hat p)\equiv\E_{Y\C\beta_0}  KL(\pi(\wt Y\C \beta_0), \hat p(\wt Y\C Y ))$ for the regression prediction problem satisfies
 $$
\rho_{n,m}(\beta_0, \hat p) =\E_{Y\C\beta_0} \log \Lambda_{n,\beta_0,\pi}(Y,X)-  \E_{Y\C\beta_0}\E_{\wt Y\C\beta_0}\log \Lambda_{n+m,\beta_0,\pi}(Z,\bar X).
 $$
 \end{lemma}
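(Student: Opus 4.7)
The plan is to expand both sides directly from the definitions. The key observation is that the posterior predictive density admits the ratio representation
\[
\hat p(\wt Y\C Y)=\frac{m(Y,\wt Y)}{m(Y)},\qquad m(Y)=\int\pi(Y\C\beta)\pi(\beta)\d\beta,\quad m(Y,\wt Y)=\int\pi(Y\C\beta)\pi(\wt Y\C\beta)\pi(\beta)\d\beta,
\]
where the factorization $\pi(Z\C\beta)=\pi(Y\C\beta)\pi(\wt Y\C\beta)$ is justified by the conditional independence of $Y$ and $\wt Y$ given $\beta$.

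Next, I would invoke the normalized-likelihood identity \eqref{eq:delta}: for any $\beta$, $\pi(Y\C\beta)=\pi(Y\C\beta_0)\Delta_{n,\beta,\beta_0}(Y,X)$. Factoring $\pi(Y\C\beta_0)$ out of the integral then gives $m(Y)=\pi(Y\C\beta_0)\Lambda_{n,\beta_0,\pi}(Y,X)$. The analogous manipulation with the stacked design $\bar X$ and stacked data $Z$ yields
\[
m(Y,\wt Y)=\pi(Z\C\beta_0)\Lambda_{n+m,\beta_0,\pi}(Z,\bar X)=\pi(Y\C\beta_0)\pi(\wt Y\C\beta_0)\Lambda_{n+m,\beta_0,\pi}(Z,\bar X).
\]
Taking logs produces
\[
\log\hat p(\wt Y\C Y)=\log\pi(\wt Y\C\beta_0)+\log\Lambda_{n+m,\beta_0,\pi}(Z,\bar X)-\log\Lambda_{n,\beta_0,\pi}(Y,X),
\]
so that the $\log\pi(\wt Y\C\beta_0)$ term cancels against the one appearing inside the KL loss.

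Finally, I would substitute this into
\[
\rho_{n,m}(\beta_0,\hat p)=\E_{Y\C\beta_0}\E_{\wt Y\C\beta_0}\left[\log\pi(\wt Y\C\beta_0)-\log\hat p(\wt Y\C Y)\right],
\]
observe that $\log\Lambda_{n,\beta_0,\pi}(Y,X)$ is a function of $Y$ only (so the $\wt Y$-expectation drops), and collect terms to recover the claimed identity. There is essentially no obstacle here; the only thing to be mildly careful about is bookkeeping of the two design matrices and the conditional-independence step that turns the joint likelihood into a product, which is what makes the $(n+m)$-dimensional $\Lambda$ appear naturally.
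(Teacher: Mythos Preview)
Your proposal is correct and follows essentially the same approach as the paper: both express $\hat p(\wt Y\C Y)$ as $\pi(\wt Y\C\beta_0)$ times the ratio $\Lambda_{n+m,\beta_0,\pi}(Z,\bar X)/\Lambda_{n,\beta_0,\pi}(Y,X)$ by factoring $\pi(\cdot\C\beta_0)$ out of the marginal likelihoods via \eqref{eq:delta}, then substitute into the definition of the KL risk so that the $\log\pi(\wt Y\C\beta_0)$ terms cancel. The paper's write-up is slightly more compressed, but the logic is identical.
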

\proof 
We have
 \begin{align*}
 \hat p(\wt Y\C Y )&=\pi(\wt Y\C \beta_0)  \frac{\int_{\beta} \Delta_{m,\beta,\beta_0}(\wt Y,\wt X)\times \Delta_{n,\beta,\beta_0}(Y,X)\pi(\beta)\d\beta }{ \int_{\beta}\Delta_{n,\beta,\beta_0}(Y,X)\pi(\beta)\d\beta}\\
 &=\pi(\wt Y\C \beta_0)  \frac{\int_{\beta} \Delta_{m+n,\beta,\beta_0}(Z,\bar X)\pi(\beta) \d\beta }{ \int_{\beta}\Delta_{n,\beta,\beta_0}(Y,X)\pi(\beta)\d\beta}.
\end{align*}
The rest follows from $\rho_{n,m}(\beta_0, \hat p)=\E_{Y\C\beta_0} KL(\pi(\wt Y\C \beta_0), \hat p(\wt Y\C Y )).$

It is useful to elaborate on the KL divergence  $KL(\pi(\wt Y\C \beta_0),\hat p(\wt Y\C Y))$ for the mixture estimator $\hat p(\wt Y\C Y)$ from \eqref{eq:multiple_shrinkage}. The following Lemma helps us understand the connection between the concentration of the posterior predictive distributions and the concentration of the parameter posteriors.
\begin{lemma}\label{eq:lemma_concentration}
For a given set of  observations $Y$ we have for $\hat p(\wt Y\C Y)$ from \eqref{eq:multiple_shrinkage}
\begin{equation}
KL(\pi(\wt Y\C \beta_0),\hat p(\wt Y\C Y)) \leq 1/2\,  E_{\beta\C  Y}    \|\wt X (\beta-\beta_0)\|^2_2 +  \sqrt{m}E_{\beta\C  Y} \|\wt X(\beta-\beta_0)\|_2.\label{eq:magic}
\end{equation}
\end{lemma}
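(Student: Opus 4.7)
The plan is to bound $\log[\pi(\wt Y\C \beta_0)/\hat p(\wt Y\C Y)]$ pointwise in $\wt Y$ by applying Jensen's inequality to the predictive density written as a posterior average, expanding the resulting log-likelihood ratio with the Gaussian formula, and then integrating over $\wt Y\C \beta_0$.

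First I write $\hat p(\wt Y\C Y)/\pi(\wt Y\C \beta_0) = E_{\beta\C Y}\Delta_{m,\beta,\beta_0}(\wt Y,\wt X)$ using the normalized likelihood from \eqref{eq:delta}. Jensen's inequality (convexity of $-\log$) then yields, pointwise in $\wt Y$,
$$\log \frac{\pi(\wt Y\C \beta_0)}{\hat p(\wt Y\C Y)} \leq -E_{\beta\C Y}\log \Delta_{m,\beta,\beta_0}(\wt Y,\wt X) = E_{\beta\C Y}\!\left[\tfrac{1}{2}\|\wt X(\beta-\beta_0)\|_2^2 - \wt\varepsilon^\top \wt X(\beta-\beta_0)\right],$$
where $\wt\varepsilon := \wt Y - \wt X\beta_0 \sim N_m(0,I_m)$.

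Next I replace the sign-indefinite cross term by its absolute value and apply Cauchy--Schwarz inside the posterior expectation to obtain
$$\log \frac{\pi(\wt Y\C \beta_0)}{\hat p(\wt Y\C Y)} \leq \tfrac{1}{2}E_{\beta\C Y}\|\wt X(\beta-\beta_0)\|_2^2 + \|\wt\varepsilon\|_2\, E_{\beta\C Y}\|\wt X(\beta-\beta_0)\|_2.$$
Finally I integrate both sides against $\pi(\wt Y\C \beta_0)$: the first term is $\wt Y$-free, and for the second, $E_{\wt Y\C \beta_0}\|\wt\varepsilon\|_2 \leq (E\|\wt\varepsilon\|_2^2)^{1/2} = \sqrt{m}$ by Jensen, producing the claimed bound.

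The only subtle point worth flagging is the decision to pass to absolute values in the cross term. If the signed term $-\wt\varepsilon^\top \wt X(\beta-\beta_0)$ were retained, Fubini together with $E\wt\varepsilon = 0$ would make its $\wt Y$-integral vanish, recovering the tighter inequality $KL \leq \tfrac{1}{2}E_{\beta\C Y}\|\wt X(\beta-\beta_0)\|_2^2$ (which is just the convexity-of-KL bound $KL(P,E_\beta Q_\beta)\leq E_\beta KL(P,Q_\beta)$). The lemma keeps the weaker $\sqrt{m}$ form, presumably because a downstream step (e.g., converting a KL bound into a concentration statement that is uniform in $Y$) benefits from absolute-value control on the linear term rather than an in-expectation identity.
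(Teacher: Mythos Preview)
Your proof is correct and follows essentially the same route as the paper: write $\hat p/\pi(\cdot\C\beta_0)=E_{\beta\C Y}\Delta_{m,\beta,\beta_0}$, apply Jensen to $-\log$, bound the cross term via Cauchy--Schwarz in $\R^m$, and use $E\|\wt\varepsilon\|_2\leq\sqrt{m}$. Your closing observation that retaining the signed cross term and invoking $E\wt\varepsilon=0$ would yield the sharper bound $KL\leq \tfrac12 E_{\beta\C Y}\|\wt X(\beta-\beta_0)\|_2^2$ is correct and is a nice addition not made explicit in the paper.
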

\proof For $ \Delta_{n,\beta,\beta_0}(Y,X)$ defined in \eqref{eq:delta} we have
$$
KL(\pi(\wt Y\C \beta_0),\hat p(\wt Y\C Y))
=-\int \pi(\wt Y\C \beta_0)\log E_{S\C Y}E_{\beta\C S,Y} \Delta_{m,\beta,\beta_0}(\wt Y,\wt X)
$$ 
which implies an upper bound from Jensen's inequality $E\log X\leq \log EX$
\begin{align*}
\E_{\wt Y\C \beta_0}E_{S\C Y}E_{\beta\C S,Y}  \left[ 1/2 \|\wt X (\beta-\beta_0)\|^2_2+(\wt Y-\wt X\beta_0)'\wt X(\beta-\beta_0)\right].
\end{align*}
Applying the Cauchy-Schwartz inequality we have $\E_{\wt Y\C \beta_0} E_{\beta\C  Y} (\wt Y-\wt X\beta_0)'\wt X(\beta-\beta_0)\leq E_{\beta\C  Y}  \|\wt X(\beta-\beta_0)\|_2\E_{\wt Y\C \beta_0}\|\wt\varepsilon\|_2$.
Because $\|\wt\varepsilon\|_2^2$ has a $\chi^2_m$ distribution with $m$ degrees of freedom, the square root of this variable has an expectation $\sqrt{2}\Gamma((m+1)/2)/\Gamma(m/2)\leq\sqrt{m}$.

\smallskip

\begin{remark}
Lemma \ref{eq:lemma_concentration} invites the possibility of quickly bounding the predictive risk using concentration rate results for $\|\wt X(\beta-\beta_0)\|_2$. In the normal means model from Section \ref{sec:normal_means}, one can use Theorem 2.5 in \cite{castillo_vdv} which shows that the typical posterior probability that $\|\beta-\beta_0\|_2$ exceeds the multiple of the minimax rate $r_n=\sqrt{s_n\log (n/s_n)}$ has a sub-Gaussian tail to conclude that $\E_{Y\C\beta_0}E_{\beta\C Y}\|\beta-\beta_0\|_2^l \lesssim r_n^l$ for $l\in \mathbb N$. Together with the fact that $\|\wt\varepsilon\|_2$ has an expectation bounded by $\sqrt{r\times n}$ in the setup from Section \ref{sec:normal_means}, we obtain a loose upper bound $s_n\log(n/s_n)+\sqrt{r ns_n\log(n/s_n)}$ which does not match the minimax rate $1/(1+r)s_n\log (n/s_n).$ This justifies the somewhat more delicate treatment of the predictive risk  in Section \ref{sec:normal_means}.
\end{remark}
Lemma \ref{eq:lemma_concentration} applies to {\em any} Spike-and-Slab prior and indicates, for example, that if one can bound the {\em average} posterior mean of $\|\beta-\beta_0\|_1$  
one can directly obtain an upper bound on the prediction risk (despite perhaps quite loose).  \cite{castillo_vdv2}  showed that average posterior probability that the distance $ \|\beta-\beta_0\|_1$ exceeds the near-minimax rate $s_n\sqrt{1/n\log p}$  (with multiplication constants depending on suitable compatibility numbers) goes to zero. Using the inequality
  $\|\wt X(\beta-\beta_0)\|_2\leq \|\wt X\|\|\beta-\beta_0\|_1$  one can quickly deduce a rate $m/n\times s_n^2\log p$ from \eqref{eq:magic} when $\sqrt{n}\leq s_n\sqrt{\log p}$. This rate, however, has an unjustifiable extra factor $s_n$ and may be improved by transporting the $\ell_2$ concentration rate under some suitable of the largest eigenvalue  of $\wt X'\wt X$. One way or another, however, Theorem 2 in \cite{castillo_vdv2} {\em does not} immediately imply a bound on the posterior mean of $\|\beta-\beta_0\|_1$ or $\|\beta-\beta_0\|_2$, even for most typical $Y$. This prevents us from quickly leveraging existing posterior concentration results in regression to bound the predictive risk. We will pursue a different direction to obtain a more concrete upper bound on the KL risk, total-variation (TV) risk and a typical KL distance.
 We now focus on two particular choices of $\pi_{S,\lambda}(\cdot)$ in \eqref{eq:SS_reg}.

\subsection{The Uniform Slab}
Rather than establishing exact minimaxity, we want to understand the rate at which the KL risk $\rho_{n,m}(\beta_0, \hat p)$ increases allowing for both $p$ and $s_n$ to grow with $n$.
Since the uniform prior was shown to dominate the plug-in estimator \cite{george_xu}, it is natural to consider a version of the Spike-and-Slab prior \eqref{eq:SS_reg} with a uniform slab.

\begin{lemma}\label{lemma:uniform_slab}
Consider a Spike-and-Slab prior \eqref{eq:SS_reg} with $\pi_{S,\lambda}(\beta_S)\propto  (\lambda/2)^s$ where $s=|S|$ and with $R=\min\{n,p\}$. Assume that $\pi(s)\propto c^{-s}p^{-as}$ for some $a,c>0$. Assume that $\|X\|=\sqrt{n}$ and $\|\wt X\|=\sqrt{m}$. Then  
$$
\sup_{\beta_0\in\Theta_p(s_n)}\rho_{n,m}(\beta_0, \hat p)\leq s_n\log (\e\, c\, p^{a+1} )+s_n/2\log[2/\pi(n+m)/\lambda^2].
$$
\end{lemma}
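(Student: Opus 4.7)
The plan is to use Lemma \ref{lemma:reg_risk} together with a Laplace-style evaluation of the marginal likelihood restricted to the true support. Since $\Lambda_{n,\beta_0,\pi}(Y,X)=m_\pi(Y)/\pi(Y\mid\beta_0)$ (writing $m_\pi$ for the prior marginal of the data), the first term of the decomposition in that lemma equals $-KL(\pi(\cdot\mid\beta_0)\,\|\,m_\pi)$ and is nonpositive; dropping it immediately gives
\begin{equation*}
\rho_{n,m}(\beta_0,\hat p)\;\le\;KL\big(\pi(Z\mid\beta_0)\,\|\,m_\pi(Z)\big),\quad Z=(Y',\wt Y')'\sim N_{n+m}(\bar X\beta_0,I).
\end{equation*}
So the task reduces to an upper bound on a single KL divergence between the true Gaussian and the marginal under the joint design $\bar X$.

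Next I would lower bound $m_\pi(Z)$ by retaining only the term in the subset sum \eqref{eq:SS_reg} corresponding to the true support $S_0=S_{\beta_0}$ of size $s_0\le s_n$:
\begin{equation*}
m_\pi(Z)\;\ge\;\pi(s_0)\binom{p}{s_0}^{-1}(\lambda/2)^{s_0}\int_{\R^{s_0}}\pi(Z\mid\beta_{S_0})\,\d\beta_{S_0}.
\end{equation*}
Completing the square in $\beta_{S_0}$ around the OLS estimator $\hat\beta_{S_0}=(\bar X_{S_0}^T\bar X_{S_0})^{-1}\bar X_{S_0}^T Z$ evaluates the Gaussian integral explicitly as $(2\pi)^{-(n+m-s_0)/2}\det(\bar X_{S_0}^T\bar X_{S_0})^{-1/2}\exp(-\|(I-P_{S_0})Z\|_2^2/2)$, where $P_{S_0}$ is the orthogonal projector onto the column span of $\bar X_{S_0}$.

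Taking logs and expectations under $Z\sim N(\bar X\beta_0,I)$, I would use: (i) $\E\|(I-P_{S_0})Z\|_2^2=n+m-s_0$, since $\beta_0$ is supported on $S_0$ so $(I-P_{S_0})\bar X\beta_0=0$; (ii) $\E\log\pi(Z\mid\beta_0)=-\tfrac{n+m}{2}\log(2\pi)-\tfrac{n+m}{2}$; and (iii) Hadamard's inequality $\det(\bar X_{S_0}^T\bar X_{S_0})\le\prod_{j\in S_0}\|\bar X^j\|_2^2=(n+m)^{s_0}$ from the assumed column-norm normalization. Combining these three facts yields
\begin{equation*}
KL\big(\pi(Z\mid\beta_0)\,\|\,m_\pi(Z)\big)\;\le\;-\log\pi(s_0)+\log\binom{p}{s_0}-s_0\log(\lambda/2)-\tfrac{s_0}{2}\log(2\pi)+\tfrac{s_0}{2}\log(n+m)-\tfrac{s_0}{2}.
\end{equation*}
Plugging in $-\log\pi(s_0)\le s_0\log c+as_0\log p+O(1)$ from the normalization of $\pi(s)\propto c^{-s}p^{-as}$, and $\log\binom{p}{s_0}\le s_0\log(ep/s_0)\le s_0+s_0\log p$, then regrouping the constants $\log 2$, $\log(2\pi)$ and $-\tfrac12$ yields precisely $s_0\log(\e\,c\,p^{a+1})+\tfrac{s_0}{2}\log[2/\pi\cdot(n+m)/\lambda^2]$. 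Taking the supremum over $\beta_0\in\Theta_p(s_n)$ amounts to setting $s_0=s_n$, since every term is monotone in $s_0$.

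The main obstacle is the interpretation of the uniform slab: if $\pi_{S,\lambda}(\beta_S)\propto(\lambda/2)^s$ is read literally as a constant density on $\R^s$, then the Gaussian integral above is exact but the slab is improper; if instead it is a proper uniform on a box of side $2/\lambda$, one must argue that $\beta_0$ together with a neighborhood where the Gaussian likelihood has nontrivial mass lies inside the box, which either requires a sup-norm restriction on $\beta_0$ or the substitution of a narrower-box lower bound and a separate optimization of its half-width (this latter route loses at most an additive $O(s_n)$ constant). I would pursue the improper-prior route because it reproduces the target constants cleanly and is consistent with the George--Xu uniform-prior framework on which \eqref{eq:SS_reg} is modeled.
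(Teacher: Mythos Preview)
Your proof is correct and follows essentially the same route as the paper. Both arguments invoke Lemma~\ref{lemma:reg_risk}, bound the first term $\E_{Y\mid\beta_0}\log\Lambda_{n,\beta_0,\pi}(Y,X)$ by zero (you phrase this as $-KL\le 0$; the paper uses Jensen directly---these are the same fact), and lower-bound $\Lambda_{n+m,\beta_0,\pi}(Z,\bar X)$ by restricting the subset sum in \eqref{eq:SS_reg} to the true support $S_0$, evaluating the Gaussian integral, and bounding $\det(\bar X_{S_0}'\bar X_{S_0})$ via column norms (Hadamard).

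The one minor variation: the paper's lower bound on $\Lambda_{n+m}$ is pointwise in $Z$, obtained by discarding the nonnegative quadratic form $\tfrac12\|P_{S_0}(Z-\bar X\beta_0)\|_2^2$ in the exponent (equivalently, $\int e^{-\|\bar Xb\|^2/2+\varepsilon'\bar Xb}\,db\ge\int e^{-\|\bar Xb\|^2/2}\,db$). You instead keep that term and take its expectation, pairing $\E\|(I-P_{S_0})Z\|_2^2=n+m-s_0$ against $\E\log\pi(Z\mid\beta_0)$. This buys you an extra $-s_0/2$ in the final bound but is otherwise identical. Your flagging of the improper-slab issue is well placed; the paper treats the uniform slab formally (the Jensen step yields $\log\int\pi(\beta)\,d\beta=0$ only if $\pi$ is proper), so both arguments share that caveat.
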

\proof
We use Lemma \ref{lemma:reg_risk}.
Jensen's inequality $E\log X\leq \log EX$  and the fact that  $E\e^{\mu+\sigma X}=\e^{\mu^2+\sigma^2/2}$  for a standard normal r.v. $X$ implies
$$
 \E_{Y\C\beta_0}\log \Lambda_{n,\beta_0,\pi}(Y,X)\leq \log  \int_\beta \e^{-1/2\|  X (\beta-\beta_0)\|^2_2}\E_{Y\C\beta_0}\e^{(Y-  X\beta_0)'  X(\beta-\beta_0)} \pi(\beta)\d\beta =0. 
$$
Next, changing variables $b(S)\equiv \beta_{S}-\beta_{0,S}$ where $\beta_S=\{\beta_i:i\in S\}$ we find that (using similar arguments as in the proof of Theorem 6 in \cite{castillo_vdv2}) for $X_S$ a submatrix of columns of $X$ inside $S$ and $s_0=|S_0|$
\begin{align*}
\Lambda_{n+m,\beta_0,\pi}(Z,\bar X)&
\geq \frac{\pi(s_0)}{{p\choose s_0}} (\lambda/2)^{s_0}\int  \e^{-1/2\|\bar Xb(S_0) \|_2^2}\d b(S_0)\geq \frac{\pi(s_0)(\lambda/2)^{s_0}}{{p\choose s_0}}\frac{(2\pi)^{s_0/2}}{|\bar X_{S_0}'\bar X_{S_0}|^{1/2}}\\
&\gtrsim  \frac{c^{-s_0}p^{-as_0}s_0^{s_0}}{(\e p)^{s_0}}\frac{(\pi\lambda^2/2)^{s_0/2}}{\|\bar X\|^{s_0}}\geq \left(\frac{s_0}{\e\, c\, p^{a+1} }\right)^{s_0}\frac{(\pi\lambda^2/2)^{s_0/2}}{(n+m)^{s_0/2}}
\end{align*}
which yields the desired conclusion.
\begin{remark}
While the scaling of the uniform prior $\pi_{S,\lambda}(\beta)\propto  (\lambda/2)^s$ is immaterial for coefficient estimation, it does influence  posterior model probabilities. \cite{castillo_vdv2} showed that such a prior yields a posterior that is asymptotically indistinguishable  from the posterior obtained under the Laplace prior when $\lambda$ is small (e.g. $\lambda=\sqrt{n}/p$).
This choice would yield an upper bound of the order $s_n[\log(p)\vee\log (1+m/n)].$ For a fixed $\lambda$, we obtain an upper bound on the risk of the rate $s_n[\log(p)\vee\log (n+m)]$. 
\end{remark}

\subsection{The Laplace Slab}
In their Theorem 6, \cite{castillo_vdv2} show that posteriors under the uniform slab and the Laplace slab   are asymptotically indistinguishable in terms of TV distance under suitable design compatibility conditions when $\lambda$ is small. Since the posterior predictive distribution is a functional of the posterior, we can expect the posterior predictive distributions  to be indistinguishable as well. We obtain risk bounds for the total-variation  predictive risk
$$
\rho^{TV}_{n,m}(\beta_0, \hat p)=\E_{Y\C\beta_0}\| \pi(\wt Y\C\beta_0)-\hat p(\wt Y\C Y)\|_{TV}
$$
as well as upper bounds on the typical KL distance between $\hat p(\wt Y\C Y)$ and $\pi(\wt Y\C \beta_0)$.  The rate that we are targeting will be somewhat compatible with the rate of the 
typical  point prediction error $\|\wt X\beta-\wt X\beta_0\|_2^2$.
   Theorem 2 in \cite{castillo_vdv2} established the concentration rate for $\|X\beta-X\beta_0\|_2^2$ as proportional to $c(S_0){s_n\log p}$  where $c(S_0)$ depends on the properties of the design sub-matrix $X_{S_0}$ of the true model $S_0$ (such as  the compatibility number $\phi(S_0)$ defined in Definition \ref{def:compatible} and the  smallest scaled singular value $\wt \phi(s)$ defined in Definition \ref{def:smallest_scaled})
for a penalty parameter $\lambda$ satisfying $\frac{\|X\|}{p}\leq \lambda\leq 2\bar\lambda$ where $\bar\lambda=2\|X\|\sqrt{\log p}.$
Similarly as in Lemma \ref{lemma:uniform_slab}, our target rate will be $s_n[\log(p)\vee\log (1+m/n)]$. Our predictive inference risk analysis will focus on sparse $\beta_0$ vectors which allow for consistent estimation. To this end, we will impose certain identifiability conditions.

\begin{definition}(Beta-min Condition)\label{ass:beta_min}
We define  $s_n$-sparse vectors $\beta_0$ with support $S_0=S_{\beta_0}$ and strong-enough signals  as (for some $M>0$)
 \begin{equation}
\wt\Theta(s_n, M)=\left\{\beta_0\in\R^p: \|\beta_0\|_0\leq s_n\,\,\text{and}\,\,\inf_{i\in S_0}|\beta_{0i}|\geq \beta_{min}\equiv \frac{M}{\wt\psi(S_0)^2}\frac{\sqrt{|S_0|\log p}}{\|X\|\phi(S_0)}\right\}.\label{eq:betamin}
\end{equation}
where $\wt\psi(S_0)$ is the smallest scaled sparse eigenvalue of "small models" defined as
\begin{equation}
\wt\psi(S)=\wt \phi\left((2+3/A_4+33/\phi(S)^2\lambda/\bar\lambda)|S|\right).
\end{equation}
\end{definition}

Corollary 1 in \cite{castillo_vdv2} (rephrased as Theorem \ref{thm:consist} in the Supplement) concluded consistent estimation of $S_0$ uniformly for all $\beta_0\in \wt\Theta(s_n, M)$ for which $\wt\psi(S_0)$ and $\phi(S_0)$ are bounded away from zero.  In addition, for our risk analysis we impose a mild design assumption.
\begin{ass}(Design Assumption)\label{ass:design}
For $S_0=\{i:\beta_{0i}\neq 0\}$ with $s_0=|S_0|$, denote with $X_0=X_{S_0}$ and assume $\|X_0\|=\sqrt{n}$ and that for some $0<b<1$ and $d>0$
$$
\frac{4\times \chi^2_{s_0,1/2}}{(1-b)^2\beta_{min}^2}<\lambda_{min}(X_0'X_0)<\lambda_{max}(X_0'X_0)\lesssim n (\log p)^{d}.
$$
where $\chi^2_{s_0,1/2}$ is the median of the Chi-squared distribution with $s_0$ degrees of freedom and  $\beta_{min}$ is the signal threshold from \eqref{eq:betamin}.
\end{ass}
\begin{remark}
Since $s_0-1<\chi^2_{s_0,1/2}<s_0$ for $s_0\geq 2$, the smallest eigenvalue part of Assumption \ref{ass:design} essentially requires that $\lambda_{min}(X_0'X_0)$ is at least of the order $n/\log p$. The upper bound allows another logarithmic deviation from the orthogonal design. From an upper bound    $\lambda_{max}(X_0'X_0)\leq \max\limits_{j=1,\dots, s_0} \|(X_0'X_0)^j\|_1 \leq ns_0$ (which holds due to the Cauchy-Schwartz inequality  and because $\|X_0\|=\sqrt{n}$) we see that the upper bound requirement is satisfied when $s_0\lesssim (\log p)^d$.
 \end{remark}
 The next Theorem characterizes the TV predictive risk as well as the concentration rate for the KL distance for high-dimensional sparse regression with Spike-and-Slab priors with Laplace slab. 
\begin{theorem}\label{thm:final}
Consider a Spike-and-Slab prior \eqref{eq:SS_reg} with $\pi_{S,\lambda}(\beta)= (\lambda/2)^s\e^{-\lambda\|\beta_S\|_1}$ where $\lambda=\sqrt{n}/p$ and with $R=p$ where $\pi(s)$ satisfies \eqref{eq:prior_dim} with $A_4>2$.   Under Assumption  \ref{ass:design} we have for $s_n\leq n=o(p/\sqrt{\log p})$, some suitable $M>0$, $d>0$ and for any $c_0>0$
$$
\sup_{\beta\in\wt\Theta(s_n): \phi(S_0)\wedge\wt\psi(S_0)\geq c_0} \left[\rho^{TV}_{n,m}(\beta_0, \hat p)\right]^2 \lesssim \mu_n
$$
and
$$
\sup_{ \beta\in\wt\Theta(s_n):  {\phi(S_0)\wedge\wt\psi(S_0)\geq c_0 }} \mathbb P_{Y\C\beta_0}\left( KL(\pi(\wt Y\C \beta_0),\hat p(\wt Y\C Y))\gtrsim \mu_n\right)=o(1). 
$$
for $\mu_n=s_n[(\log p)^{d-1\vee 1}\vee \log(1+m/n)]$.
\end{theorem}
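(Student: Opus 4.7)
The plan is to extend the proof of Lemma \ref{lemma:uniform_slab} from the uniform slab to the Laplace slab, and then use Pinsker's inequality and a concentration argument to transfer the resulting KL risk bound to the TV risk and to the typical KL statement. By Lemma \ref{lemma:reg_risk}, the KL predictive risk equals $\E\log\Lambda_{n,\beta_0,\pi} - \E\E\log\Lambda_{n+m,\beta_0,\pi}$, and by Jensen's inequality the first term is $\leq 0$, just as in Lemma \ref{lemma:uniform_slab}. The remaining task is to lower bound $\Lambda_{n+m,\beta_0,\pi}(Z,\bar X)$.

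Restricting the prior sum to the single term $S=S_0$, substituting $b=\beta_{S_0}-\beta_{0,S_0}$ and then $u=b-\hat b$ with $\hat b=(\bar X_0'\bar X_0)^{-1}\bar X_0'\bar\varepsilon$ (completing the square on the Gaussian factor, where $\bar\varepsilon=(Y-X\beta_0,\wt Y-\wt X\beta_0)'$), and bounding $\|u+\hat b+\beta_{0,S_0}\|_1\leq \|u\|_1+\|\hat b\|_1+\|\beta_{0,S_0}\|_1$ yields
\begin{align*}
\Lambda_{n+m}\geq \frac{\pi(s_0)(\lambda/2)^{s_0}}{{p\choose s_0}}\,\e^{\bar\varepsilon' P_{\bar X_0}\bar\varepsilon/2-\lambda\|\hat b\|_1-\lambda\|\beta_{0,S_0}\|_1}\int_{\R^{s_0}}\e^{-u'\bar X_0'\bar X_0 u/2-\lambda\|u\|_1}\,\d u.
\end{align*}
The remaining integral can be bounded below by a constant multiple of the full Gaussian integral $(2\pi)^{s_0/2}/|\bar X_0'\bar X_0|^{1/2}$: restrict $u$ to the $\ell_2$-ball of radius $R=1/(\lambda\sqrt{s_0})$, on which $\lambda\|u\|_1\leq 1$, and verify that this ball carries Gaussian mass bounded away from $0$. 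This follows from Assumption \ref{ass:design} together with $\lambda=\sqrt{n}/p=o(1/\sqrt{\log p})$ (ensured by $n=o(p/\sqrt{\log p})$), since then $R^2\lambda_{\min}(\bar X_0'\bar X_0)\gtrsim s_0$.

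Taking $-\log$ and expectations, the contributions are: $-\log\pi(s_0)+\log{p\choose s_0}\lesssim s_n\log p$ from \eqref{eq:prior_dim} with $A_4>2$; $-s_0\log(\lambda/2)=s_0\log(2p/\sqrt{n})$; $\E\bar\varepsilon' P_{\bar X_0}\bar\varepsilon=s_0$ is negligible; $\lambda\E\|\hat b\|_1\leq \lambda\sqrt{s_0}\sqrt{\E\|\hat b\|_2^2}=\lambda s_0=o(1)$; and the log-determinant $(1/2)\log|\bar X_0'\bar X_0|\leq(s_0/2)\log\lambda_{\max}(\bar X_0'\bar X_0)$ is bounded via Assumption \ref{ass:design} and the normalization $\|\wt X^j\|_2=\sqrt{m}$ as $\lambda_{\max}(\bar X_0'\bar X_0)\lesssim n(\log p)^d+m$. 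Collecting terms yields $\rho_{n,m}(\beta_0,\hat p)\lesssim \mu_n$. Pinsker's inequality and Jensen then give $[\rho^{TV}_{n,m}(\beta_0,\hat p)]^2\leq \E_{Y\C\beta_0}\|\pi(\wt Y\C\beta_0)-\hat p(\wt Y\C Y)\|_{TV}^2\leq \rho_{n,m}/2\lesssim \mu_n$, which is the first assertion. For the typical KL statement, writing $KL(\pi(\wt Y\C\beta_0),\hat p(\wt Y\C Y))=\log\Lambda_n(Y,X)-\E_{\wt Y\C\beta_0}\log\Lambda_{n+m}(Z,\bar X)$ and combining Bernstein-type concentration of the $\chi^2_{s_0}$ quantity $\bar\varepsilon' P_{\bar X_0}\bar\varepsilon$ around $s_0$ with posterior-contraction control of $\log\Lambda_n$ (Theorem \ref{thm:consist} in the Supplement) delivers $\P_{Y\C\beta_0}(KL\gtrsim\mu_n)=o(1)$.

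The main obstacle is the lower bound on the Gaussian--Laplace integral $\int \e^{-u'\bar X_0'\bar X_0 u/2-\lambda\|u\|_1}\,\d u$: a crude estimate inserts spurious $\log$ or polynomial factors, so one must exploit the smallness of $\lambda=\sqrt{n}/p$ under $n=o(p/\sqrt{\log p})$ to absorb the Laplace penalty with a constant loss (this is the new ingredient relative to \cite{castillo_vdv2}). A secondary technical point is the conversion from the expected-KL bound to the typical-KL statement, which relies on sub-Gaussian control of the log-marginal-likelihood fluctuations under $Y\sim N_n(X\beta_0,I)$, following techniques developed in \cite{castillo_vdv2}.
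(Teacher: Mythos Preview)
Your argument has a genuine gap: the term $\lambda\|\beta_{0,S_0}\|_1$ that appears in your lower bound on $\Lambda_{n+m}$ is never controlled. When you take $-\log$ and collect contributions, this becomes $+\lambda\|\beta_0\|_1$ in the risk bound, and since $\wt\Theta(s_n,M)$ imposes only a \emph{lower} bound on the nonzero coordinates of $\beta_0$, the quantity $\lambda\|\beta_0\|_1$ is unbounded over the parameter class (even with $\lambda=\sqrt{n}/p$ small). The Jensen step $\E_{Y\C\beta_0}\log\Lambda_n\leq 0$ is precisely what loses you the compensating term: for the uniform slab this is harmless, but for the Laplace slab the prior mass near $\beta_0$ already carries a factor $\e^{-\lambda\|\beta_0\|_1}$, and the crude Jensen bound does not reproduce it on the numerator side.

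The paper's remedy is exactly the opposite of what you identify as the ``new ingredient.'' The lower bound on $\Lambda_{n+m}$ (Lemma~\ref{lemma:lb}) is essentially Lemma~2 of \cite{castillo_vdv2} and already contains $-\lambda\|\beta_0\|_1$. The novel piece is Lemma~\ref{lemma:ub}, a \emph{sharp upper bound} on $\E_{Y\C\beta_0}\log\Lambda_{n,\beta_0,\pi_{S_0,\lambda}}$ that also contains $-\lambda\|\beta_0\|_1$; this is where Assumption~\ref{ass:design} and the beta-min condition enter, via an orthant decomposition of the Gaussian--Laplace integral and an argument that the dominant orthant has sign pattern $\mathrm{sign}(\mu)$ matching $\mathrm{sign}(\beta_0)$. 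Once both bounds carry $-\lambda\|\beta_0\|_1$, the terms cancel in the oracle risk. To reach the oracle setting, the paper first restricts to a high-probability event $\mathcal D_n$ and uses model-selection consistency (Theorem~\ref{thm:consist}) to write $\log\Lambda_n=\log\int_{\mathcal B}\Delta_n\pi+\log(1/P(\mathcal B\C Y))$ with $\mathcal B=\{S_\beta=S_0\}$, so that both numerator and denominator live under the oracle Laplace prior $\pi_{S_0,\lambda}$. Your use of Theorem~\ref{thm:consist} ``to control $\log\Lambda_n$'' is not how it functions here; and the typical-KL statement in the paper follows directly because the bound on $KL$ holds uniformly on $\mathcal D_n$, so $\P(KL\gtrsim\mu_n)\leq\P(\mathcal D_n^c)=o(1)$, without a separate concentration argument.
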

\proof Section \ref{sec:proof_thm:final}
 
 \smallskip
 The proof of Theorem \ref{thm:final} relies on subset selection consistency and utilizes a novel upper bound on the marginal likelihood under the Laplace prior.
Similarly as in Section \ref{sec:hierarchical}, the rate in Theorem \ref{thm:final} is  again {\em adaptive}, i.e. the prior does not depend on $s_n$, and (for $m/n\lesssim \log p$) is proportional to the minimax estimation rate $s_n\log (p/s_n)$ up to a logarithmic factor. Such adaptation 
is a collateral benefit of the fully Bayesian treatment of sparse regression.
The results from Section \ref{sec:normal_means} cannot be quickly concluded as a special case with $X=\wt X=I$. Our analysis of the sparse normal-means model in Section \ref{sec:normal_means} was sharper, where under the beta-min condition we were able to obtain the actual minimax rate $1/(1+r)s_n\log(n/s_n)$ without any additional logarithmic factors.

\section{Discussion}\label{sec:discuss}
This paper investigates several widely used priors from a predictive inference point of view. We establish a negative result for the Bayesian LASSO, showing that posterior predictive densities under this prior cannot converge to the  true density of future data at an optimal rate for the usual tuning that would yield an optimal posterior mode in estimation. Next, we study the popular Dirac Spike and Laplace Slab mixture prior and the Spike-and-Slab LASSO prior and show that proper calibrations (that depend on $s_n$ and $r$) yield rate-minimaxity.
By considering a hierarchical extension, we show that adaptation to $s_n$ is possible with the usual beta-binomial prior on the sparsity pattern. Finally, we obtain an adaptive rate for predictive density estimation in high-dimensional regression which mirrors the estimation rate of sparse regression coefficients.
 

\section{Proof of Theorem \ref{thm:dirac_laplace}} \label{sec:proof_thm:dirac_laplace}
We recall the risk decomposition in Remark \ref{rem:decompose} (or equivalently in Lemma 3 in \cite{muk1})
\begin{equation}
\rho(\theta,\hat p)=E  g(Z,\theta,v),\quad\text{where}\quad  g(z,\theta,v)=\theta^2/(2r)+\log N_{\theta,1}(z)-\log N_{\theta,v}(z),\label{eq:risk_g}
\end{equation}
where the expectation is taken over $Z\sim N(0,1)$ and where 
$$
N_{\theta,v}(z)=1+\frac{\eta}{1-\eta}\int \exp \left\{\mu\left[z/\sqrt{v}+\theta/v\right]-\frac{\mu^2}{2v}\right\}\pi_1(\mu\C\lambda)\d\mu
$$
with $\pi_1(\mu\C\lambda)=\lambda/2\e^{-\lambda|\mu|}$ for  $\lambda>0.$ We find that
$$
\log N_{\theta,v}(z)=\log \left[1+\frac{\eta}{1-\eta}\frac{\lambda}{2}(I_1^v+I_2^v)\right]
$$
where
\begin{equation}\label{eq:Is} 
I_1^v=\sqrt{v}\frac{\Phi(\mu_1/\sqrt{v})}{\phi(\mu_1/\sqrt{v})} \quad\text{and}\quad I_2^v=\sqrt{v}\frac{\Phi(-\mu_2/\sqrt{v})}{\phi(-\mu_2/\sqrt{v})}
\end{equation}
and
\begin{equation}
\mu_1=z\sqrt{v}+\theta-v\lambda\quad\text{and}\quad  \mu_2=z\sqrt{v}+\theta+v\lambda.\label{eq:mus}
\end{equation}
The reversed risk characterization in Lemma \ref{lemma:decompose} will appear later in the section below.
 \subsection{The case when $\theta\neq 0$.}
Define $t_v=\lambda\sqrt{v}  -|z+\theta/\sqrt{v}|$. When $z+\theta/\sqrt{v}>0$ we have $I_1^v>I_2^v$ and the reverse is true when $z+\theta/\sqrt{v}\leq0$.
This observation yields the following bounds
$$
\log N_{\theta,1}(z)\leq \log\left(1+\frac{\eta\lambda }{1-\eta} R_1\right)\quad\text{and}\quad
\log N_{\theta,v}(z)> \log\left(1+\frac{\eta\lambda\sqrt{v} }{(1-\eta)2} R_v\right) 
$$
where (using the Mills ratio notation $R(x)=(1-\Phi(x))/\phi(x)$)
$$
R_v =R(t_v)= \frac{1-\Phi(t_v)}{\phi(t_v)}.
$$
We consider two scenarios for bounding the function $g(z,\theta,v)$ in \eqref{eq:risk_g} depending on the magnitude of   $|z+\theta|$ (which has the same distribution as $|Y|$).  In Section \ref{sec:separable} we emphasized that depending on $|Y|$,
the entire predictive density is dominated by either the slab or the spike predictive density. We exploit this idea but only for the proof, not for the construction of an actual estimator \cite{muk1}.    
\subsubsection{ Case (i): $1/R_1\leq [\eta /(1-\eta)]^{v}$}   \label{sec:casei}
 As will be seen later, this event is equivalent to $|z+\theta|$  being large enough. Since $ z+\theta$ is a proxy for $Y$, we would then expect the risk to be dominated by the slab 
risk.  We can write
\begin{equation}
\log \frac{N_{\theta,1}(z)}{N_{\theta,v}(z)}\leq\log\left(
\frac{R_1}{R_v}\right)+\log\left(\frac{\frac{1}{R_1}+\frac{\eta}{1-\eta}\lambda}{\frac{1}{R_v}+\frac{\eta}{1-\eta}\frac{ \sqrt{v}}{2}\lambda}\right).\label{eq:two_terms}
\end{equation}
This expression, in fact, brings us back to the risk decomposition  in Lemma \ref{eq:SS_slab_risk}  written in terms of the  risk of the slab Bayesian LASSO predictive density $\hat p_1(\cdot)$ plus an extra component (the second summand in \eqref{eq:two_terms}).
Indeed, $\theta^2/(2r)+\log(R_1/R_v)$ bounds the LASSO risk in Lemma \ref{lemma:lasso_risk_decompose} and the second summand in \eqref{eq:two_terms} corresponds to $E \log N_{\theta,1}^{SS}(z)-
E\log N_{\theta,v}^{SS}(z)$ in Lemma \ref{eq:SS_slab_risk}. The goal is to show that the second summand in \eqref{eq:two_terms} is small. Indeed, we have
\begin{equation}
\log\left( {\frac{1}{R_1}+\frac{\eta}{1-\eta}\lambda}\right)-\log\left({\frac{1}{R_v}+\frac{\eta}{1-\eta}\frac{ \sqrt{v}}{2}\lambda}\right)\leq \log\left(\frac{2}{\sqrt{v}}\right)+ \log \left[1+\frac{1}{\lambda}\left(\frac{1-\eta}{\eta }\right)^{1-v}\right].\label{eq:term2}
\end{equation}
Next, we have 
\begin{equation}
\frac{R_1}{R_v}<\frac{1}{\Phi(-t_v)}\exp\left\{\frac{\lambda^2(1-v)}{2}+\lambda|z|(1-\sqrt{v})+\frac{\theta^2}{2}\left(1-\frac{1}{v}\right)+z\theta\left(1-\frac{1}{\sqrt{v}}\right)\right\}\label{eq:term1}.
\end{equation}
Now we focus on the term $1/\Phi(-t_v)$ in \eqref{eq:term1}.  We have $-t_v>-\lambda\sqrt{v}$ and (because $\lambda\sqrt{v}>0$ we can apply Lemma \ref{lem:mills})
$$
\frac{1}{\Phi(-t_v)}<\frac{1}{1-\Phi(\lambda\sqrt{v})}\leq   \frac{\sqrt{\lambda^2v+4}+\lambda\sqrt{v}}{2\phi(\lambda\sqrt{v})}\leq   \frac{\sqrt{\lambda^2 +4}+\lambda }{2\phi(\lambda\sqrt{v})}. 
$$
This yields (using Lemma \ref{lemma:aux_function})
\begin{equation}
\log[1/\Phi(-t_v)]\leq \log \sqrt{2\pi} +\frac{\lambda^2v}{2}+\frac{4}{\lambda^2 }+\log(\lambda ). \label{eq:term3}
\end{equation}
Now, we  understand when the event $\{1/R_1<[\eta\lambda/(1-\eta)]^{v}\}$ actually occurs.
Because the Mills ratio function $R(x)$ is monotone decreasing and  satisfies  
$$
0.5/\phi(x)  \leq R(x)\leq 1/\phi(x)\quad\text{when}\quad x<0
$$
and because $(1-\eta)/\eta=n/s_n$ goes to infinity, we have 
\begin{equation}
R(x_{\eta })=\left[\frac{1-\eta}{ \eta }\right]^{ v}\,\,\,\text{for some}\,\,\,   h^L_{\eta }<x_{\eta }<h^U_{\eta }<0\label{eq:x_threshold}
\end{equation}
where
\begin{equation}
h^U_{\eta }=-\sqrt{2v\log\left[( \pi 2)^{-v/2}(1-\eta)/\eta\right]}\quad\text{and}\quad h^L_{\eta }=-\sqrt{2v\log\left[(\pi/2)^{-v/2} (1-\eta)/ \eta\right]}.\label{eq:hs}
\end{equation}
Then we have
$$
\mathcal A_1\equiv \left\{\frac{1}{R_1}\leq\left(\frac{\eta}{1-\eta}\right)^v\right\}=\{ t_1\leq x_{\eta }\}\subset\{ t_1\leq h^U_{\eta }\}=\{|z+\theta|\geq \lambda-h^U_{\eta }\}.
$$
Interestingly, this corresponds to the regime when the magnitude of  $ z+\theta$ is above $-h^U_{\eta,\lambda}$, the usual detection threshold $\sqrt{2\log(n/s_n)}$ suitably rescaled by $v$. 
This implies  that the slab is active when the size of the observed data $|x|$ is large, yielding  an upper bound 
 $$
  g(z,\theta,v)<   g_1(z,\theta,v)
 $$
 where 
 \begin{align}
 g_1(z,\theta,v)=& -\log\Phi(-t_v)+ \frac{\lambda^2(1-v)}{2}+\lambda|z|(1-\sqrt{v})+z\theta\left(1-\frac{1}{\sqrt{v}}\right)\\
 &+\log(4/\sqrt{v}) + (1-v)\log \left( \frac{1-\eta}{\eta }\right)+\log(1+1/\lambda).
 \label{eq:bound1}
 \end{align}

\subsubsection{ Case (ii): $1/R_1 \geq[\eta /(1-\eta)]^{v}$} \label{sec:caseii}
Using similar arguments as before in the Case (i), we have
$$
\mathcal A_2\equiv \left\{\frac{1}{R_1}>\left(\frac{\eta}{1-\eta}\right)^{v}\right\}=\{ t_1>x_{\eta }\}\subset\{ t_1>h^L_{\eta}\}=\{|z+\theta|<\lambda-h^L_{\eta }\}.
$$
This regime mirrors the scenario when the observed data is below the detection threshold. In this case, the slab risk plays a minor role  and we obtain a simplified expression
$$
\log \frac{N_{\theta,1}(z)}{N_{\theta,v}(z)}<\log\left(\frac{1+\frac{\eta}{1-\eta}\lambda R_1}{1+\frac{\eta}{1-\eta}\frac{ \sqrt{v}}{2}\lambda R_2}\right)<\log\left[1+\lambda\left(\frac{\eta}{1-\eta}\right)^{1-v}\right].
$$
Next, we have (using $(a+b)^2\leq 2a^2+2b^2$) and because $v/r=1/(r+1)=(1-v)$
$$
\frac{\theta^2}{2r}\leq \frac{ |z+\theta|^2+|z|^2}{ r}\leq \frac{2\lambda^2 +|z|^2}{r}+  4(1-v) \log\left(\frac{1-\eta}{\eta }\right).  
$$
This implies an upper bound $g(z,\theta,v)\leq g_2(z,\theta,v)$ where
\begin{align}
g_2(z,\theta,v)&=   \frac{2\lambda^2 +|z|^2}{r}+  4(1-v) \log\left(\frac{1-\eta}{\eta}\right)+\log(1+\lambda). \label{eq:bound2}
\end{align}

\subsubsection{Combining the Cases}\label{sec:cases_combined}
We combine the two bounds in \eqref{eq:bound1} and \eqref{eq:bound2} to obtain
\begin{equation}
g(z,\theta,v) \leq g_1(z,\theta,v)\mathbb I(z\in \mathcal A_1)+g_2(z,\theta,v)\mathbb I(z\in \mathcal A_2).\label{eq:bound_paste}
\end{equation}
Here, we are pasting two risk bounds depending on the two scenarios for $|z+\theta|$ as opposed to pasting two estimators \cite{muk1}. 
Next, we define 
$$
f(\theta,z,v,\eta)= 2z\theta\left(1-\frac{1}{\sqrt{v}}\right) \mathbb I(z\in \mathcal A_1)
$$
which occurs in the bound $g_1(z,\theta,v)$. It turns out that, on the event $\mathcal A_1$, this term averages out to a negative value, i.e. we have  
$E\, \mathbb I(z\in  \mathcal A_1) f(\theta,z,v,\eta)<0$. Indeed, with $c_\eta=\lambda-h^U_{\eta }$
$$
E\, \mathbb I(z\in \mathcal A_1)(\theta z)=\int_{z>-\theta+c_{\eta}}\theta z\phi(z)+\int_{z<-\theta-c_{\eta}}\theta z\phi(z)
=\theta [\phi(-\theta+c_{\eta})-  \phi(-\theta-c_{\eta})]>0
$$
which yields $E\,\mathbb I(z\in \mathcal A_1)f(\theta,z,v,\eta)<0$ because $1-1/\sqrt{v}<0$.

Combining \eqref{eq:bound1}, \eqref{eq:bound2} and \eqref{eq:term3}  and using the fact that $|z|$ has a folded normal distribution with   $E|z|=\sqrt{2/\pi}$
and $E|z|^2=1$
\begin{align}
E g(z,\theta,v) 
\leq& 
 \log \left(4\sqrt{2\pi/v}\right)  +\frac{4}{\lambda^2 } + \frac{\lambda^2 }{2}+\frac{2\lambda^2 +1}{r} +\lambda (1-\sqrt{v})\sqrt{2/\pi} \\
 &  + 5(1-v)\log \left( \frac{1-\eta}{\eta }\right)+\log(2\lambda+\lambda^2+1).
\label{eq:bound_total}
 \end{align}
 With fixed $r\in(0,\infty)$, fixed $\lambda>0$ and $(1-\eta)/\eta=n/s_n$, we can see that the upper bound is of the order $(1-v)\log (n/s_n)$.
 What is  concerning is the case when $r$ is very small which may take a large $n$ for the term $(1-v)\log (n/s_n)$ to dominate. It might be worthwhile to consider
 tuning $\lambda$ according to $r$ to make sure the the bound is valid also for smaller $n$.
 
 We now consider two scenarios: (a) $r>1$ when the variance of the future observation $y$ is larger than that of $x$, and (b) $0<r<1$ when the training data is noisier. 
 The case (a) $r>1$ implies $v>1/2$ and for calibrations $1/[(1-v)\log (n/s_n)]\lesssim\lambda^2\lesssim (1- v) \log (n/s_n)$  (which includes setting $\lambda$ equal to a fixed constant that does not depend on $n$) 
 imply
 $
 \rho(\theta,\hat p)\lesssim (1-v)\log(n/s_n).
 $
 For instance, we can choose $\lambda^2=C_r^*(1-v)$ for $C_r^*>0$ such that $C_r^*>2/[5(1-v)]$. Then
\begin{equation}
\rho(\theta,\hat p) \leq 5(1-v)\log (n/s_n)+\wt C_r^*,  \label{eq:rho}
\end{equation}
where
\begin{equation}
\wt C_r^*=\log(8\sqrt{\pi})+10+2C_r^*(1-v)+C_r^*\sqrt{2/\pi}+\log (2\sqrt{C_r^*(1-v)}+C_r^*(1-v)).\label{eq:remainder1}
\end{equation}
 In the case (b) $0<r<1$ we have $0<v<1/2$.
We can choose $\lambda^2=C_r v$ for $C_r>\frac{2}{v(1/2+4)}$. This yields
\begin{align*}
\frac{4}{\lambda^2 }+\frac{\lambda^2 }{2}+\frac{2\lambda^2 +1}{r} +\lambda (1-\sqrt{v})\sqrt{2/\pi} &< 9 +\frac{\lambda^2(1/2+4)+2}{ r} +\sqrt{C_r v}(1-\sqrt{v})\sqrt{2/\pi}\\
&<9+ 9\frac{C_r}{r+1} +
\sqrt{C_r v}(1-\sqrt{v})\sqrt{2/\pi}
\end{align*}
and
 $$
 \log \left(4\sqrt{2\pi/v}\right)+\log(2\lambda+\lambda^2+1)< \log (8\sqrt{2\pi})+\log (2\sqrt{C_r}+C_r)
  $$
 and 
$
\rho(\theta,\hat p) \leq 5(1-v)\log (n/s_n)+\wt C_r,  
$
 where
\begin{equation}
 \wt C_r=\log (8\sqrt{2\pi})+9+ \frac{C_r}{r+1}(1/2+4)+
\sqrt{C_r }\sqrt{1/(8\pi)}+\log (2\sqrt{C_r}+C_r)\label{eq:remainder2}.
\end{equation}
\subsubsection{The case when $\theta=0$}
This step is analogous to Section 2 in \cite{muk1}. Because $N_{\theta,v}(z)>1$ and from  Jensen's inequality and the fact that $E\e^{cZ}=\e^{-c^2/2}$ 
$$
E\log N_{\theta,v}(Z )\leq \log E N_{\theta,v}(Z )=1+\frac{\eta}{1-\eta}\int \exp(\mu\theta/v)\pi_1(\mu\C\lambda)\d\mu
$$ 
we have for $\theta=0$
\begin{equation}
\rho(0,\hat p)<\log\left(1+\frac{\eta}{1-\eta}\right)<\frac{\eta}{1-\eta}.\label{eq:rho0}
\end{equation}

\medskip

\noindent The statement  \eqref{eq:rate_optimal} of Theorem \ref{thm:dirac_laplace} follows from the fact that, for separable (independent product) priors, 
$$
\rho_n(\theta,\hat p)=\sum_{i=1}^n\rho(\theta_i,\hat p)\leq (n-s_n)\rho(0,\hat p)+s_n\sup_{\theta\in \R}\rho(\theta,\hat p).
$$
Plugging  the inequalities \eqref{eq:rho} and \eqref{eq:rho0} into the expression above, we obtain the desired statement.






\clearpage

\section*{Appendix}
The Appendix contains proofs of all remaining theorems as well as auxiliary lemmata.


\section{Proof of Theorem \ref{lemma:risk_lasso_upper}}\label{sec:proof_lemma:risk_lasso_upper}
Similarly as in the proof of Theorem \ref{thm:dirac_laplace}, we separate the cases when $\theta\neq0$ and when $\theta=0$.
 \subsection{The case when $\theta\neq 0$.}\label{sec:signal_lasso}
We find an upper bound on $\rho(\theta,\hat p)$ for the case when $\theta\neq 0$ using  Lemma \ref{lemma:lasso_risk_decompose}.
Using the prior \eqref{eq:lasso_prior}, we have
$$
\log N_{\theta,v}^{LASSO}(z)=\log \left[\frac{\lambda}{2}(I_1^v+I_2^v)\right]
$$
where $I_1^v$ and $I_2^v$ were defined in \eqref{eq:Is}. Recall also the definition of $\mu_1$ and $\mu_2$ in \eqref{eq:mus}.
To find and upper and lower bound to $\log N_{\theta,v}(z)$,
we consider two plausible scenarios (a) $\mu_1>-\mu_2$ which is equivalent to $I_1^v>I_2^v$ and to $\{z>-\theta/\sqrt{v}\}$, and (b)  $\mu_1\leq -\mu_2$ which is equivalent to $I_2^v\geq I_1^v$ and  to $\{z\leq -\theta/\sqrt{v}\}$.

\subsubsection{Upper bound on $E\log N_{\theta,v}^{LASSO}(Z)$}
We consider  the two cases (a) $I_1^v>I_2^v$ and (b) $I_1^v\leq I_2^v$ and write 
\begin{equation}
E \log N_{\theta,v}^{LASSO}(Z)\leq \log(\lambda\sqrt{2\pi v})+ T_1^v(\theta)+ T_2^v(\theta)\label{eq:LASSO_upper}
\end{equation}
where
\begin{equation}
T_1^v(\theta)=E \,\mathbb{I} (z>-\theta/\sqrt{v})\mu_1^2/(2v)\quad\text{and}\quad
T_2^v(\theta)=E\, \mathbb{I}( z\leq-\theta/\sqrt{v}) \mu_2^2/(2v).\label{eq:Ts}
\end{equation}
We find that 
\begin{equation}
T_1^v(\theta)+T_2^v(\theta)= E \frac{(|z+\theta/\sqrt{v}|-\lambda\sqrt{v})^2}{2}=\frac{\theta^2}{2v}+\frac{1+\lambda^2v}{2}- \lambda E |z\sqrt{v}+\theta|.\label{eq:Tsum}
\end{equation}

\subsubsection{Lower bound on $E\log N_{\theta,v}^{LASSO}(Z)$}
Considering again the two cases (a) $I_1^v>I_2^v$ and (b) $I_1^v\leq I_2^v$ we find that
\begin{equation}\label{eq:LASSO_lower}
E \log N_{\theta,v}^{LASSO}(z)>\log(\lambda\sqrt{\pi v/2})+ T_1^v(\theta)+ T_2^v(\theta)+T^3_v(\theta)
\end{equation}
where $T_1^v(\theta)$ and $T_1^v(\theta)$ were defined earlier in \eqref{eq:Ts} and where (because in the case (a) $\mu_1>-\lambda v$ and in the case (b) $-\mu_2>-\lambda v$)
\begin{align}
T^3_v(\theta)=&[{P(\mu_1>0)+ P(\mu_2<0)}]\log 1/2+P\left( |z+\frac{\theta}{\sqrt{v}}|<\lambda\sqrt{v}\right)\log \Phi(-\lambda\sqrt{v}).\label{eq:T3}
\end{align}
Recall the definition of the Gaussian Mills ratio $R(x)=(1-\Phi(x))/\phi(x)$.
Then  
$$
T^3_v(\theta)>\log 1/2+P\left( |z+\frac{\theta}{\sqrt{v}}|<\lambda\sqrt{v}\right)\left(\log \phi(\lambda\sqrt{v})+\log R(\lambda\sqrt{v})\right).
$$
Next, using the lower bound on the Gaussian Mills ratio in Lemma \ref{lem:mills}  
we have
\begin{equation}
T^3_v(\theta)>\log 1/2+P\left( |z+\frac{\theta}{\sqrt{v}}|<\lambda\sqrt{v}\right)\left(-\log\sqrt{2\pi}-\frac{\lambda^2v}{2}-\log\frac{\sqrt{\lambda^2v+4}+\lambda\sqrt{v}}{2} \right).\label{eq:lb_T3}
\end{equation}

\subsubsection{Combining the bounds}\label{sec:upper_lasso_bound}
Combining the upper bound for $E \log N_{\theta,1}^{LASSO}(z)$ and the lower bound for $E \log N_{\theta,v}^{LASSO}(z)$, Lemma \ref{lemma:lasso_risk_decompose} yields that for any $\theta\in \R$
\begin{align*}
\rho(\theta,\hat p)\leq&  
\log(2/\sqrt{v})+\frac{\lambda^2(1-v)}{2}+\lambda E|z\sqrt{v}+\theta|-\lambda E|z+\theta|- T^3_v(\theta).
\end{align*}
Next, we use the fact that  $|z\sqrt{v}+\theta|-|z+\theta|\leq |z|(1-\sqrt{v})$ and that $|z|$ has a folded normal distribution with a mean $\sqrt{2/\pi}$. From the bound
$$
-T^3_v(\theta)<\log (2\sqrt{2\pi})+ \frac{\lambda^2v}{2}+\log\frac{\sqrt{\lambda^2v+4}+\lambda\sqrt{v}}{2} 
$$
we obtain
$$
\rho(\theta,\hat p)\leq 
\log(\lambda 4\sqrt{2\pi/v} )+\frac{\lambda^2}{2}+\lambda\sqrt{2/\pi} +  \log\frac{\sqrt{1+4/(\lambda^2)}+1}{2}.
$$
Using Lemma \ref{lemma:aux_function} we obtain the desired upper bound on  $\rho(\theta,\hat p)$.

\subsection{The case when $\theta=0$.}\label{sec:noise_lasso}
By Jensen's inequality $E\log X\leq \log E X$ we find from the Fubini's theorem and from the fact that $E \exp(\mu Z/\sqrt{v})=\exp(\mu^2/(2v))$  
$$
E \log N_{0,1}^{LASSO}(Z)<\log E \int \exp \left\{\mu\ Z/\sqrt{v}  -\frac{\mu^2}{2v}\right\}\pi_1(\mu\C\lambda)\d\mu=0.
$$ 
This yields $\rho(0,\hat p)\leq -E\log N_{0,v}^{LASSO}(z)$. To find a lower bound for $E\log N_{0,v}^{LASSO}(z)$, we use the  notation introduced  $I_1^v$ and $I_2^v$ in \eqref{eq:Is} but now for the special case when $\theta=0$. 
Similarly as in Section \ref{sec:signal_lasso} we consider two cases $(a)$ when $z>0$ we have $I_1^v>I_2^v$ and   (b) when $z\leq0$ we have $I_1^v\leq I_2^v$. 
Next, in the case (a) (since $\theta=0$) we have $\mu_2=z\sqrt{v}+\lambda v>0$ and thereby we can use Lemma \ref{lem:mills} which yields
\begin{equation}\label{eq:mills_mu2}
\frac{2}{\sqrt{\mu_2^2/v+4}+\mu_2/\sqrt{v}}<I_2^v=\frac{1-\Phi(\mu_2/\sqrt{v})}{\phi(\mu_2/\sqrt{v})}<\frac{2}{\sqrt{\mu_2^2/v+2}+\mu_2/\sqrt{v}}.
\end{equation}
Similarly, in the case (b) we have $\mu_1=z\sqrt{v}-\lambda v<0$ and thereby
\begin{equation}\label{eq:mills_mu1}
 \frac{2}{\sqrt{\mu_1^2/v+4}-\mu_1/\sqrt{v}}<I_1^v=\frac{ 1- \Phi(-\mu_1/\sqrt{v})}{\phi(\mu_1/\sqrt{v})}< \frac{2}{\sqrt{\mu_1^2/v+2}-\mu_1/\sqrt{v}}.
\end{equation}
While here we only need the lower bounds, in the next Section \ref{sec:lemma:lb} we utilize also the upper bounds. 
This yields 
\begin{align}
E \log\left[\frac{\lambda}{2}(I_1^v+I_2^v)\right]>& \log(\lambda\sqrt{v})-E \log (\lambda\sqrt{v}+|z|)-E  \log\frac{\sqrt{\frac{4}{(\lambda \sqrt{v}+|z|)^2}+1}+1}{2}\label{eq:lb2}
\end{align}
We use  Jensen's inequality $E\log (\lambda\sqrt{v}+|z|) \leq \log (\lambda\sqrt{v}+\sqrt{2/\pi})$ and Lemma \ref{lemma:aux_function} which yields
$$
E  \log\frac{\sqrt{\frac{4}{(\lambda \sqrt{v}+|z|)^2}+1}+1}{2}<\log\frac{\sqrt{\frac{4}{ \lambda^2 v}+1}+1}{2}<\frac{4}{\lambda^2v}.
$$
Altogether, we find 
$$
\rho(0,\hat p)<-E \log\left[\frac{\lambda}{2}(I_1^v+I_2^v)\right]<  \log \left(1+\frac{\sqrt{2}}{\lambda\sqrt{\pi v}}\right)+ \frac{4}{\lambda^2 v}.
$$

\section{Lower Bound for the LASSO}\label{sec:lemma:lb}

We know that for separable priors (such as the Laplace product prior \eqref{eq:lasso_prior}) we have
$$ 
(n-s_n)\rho(0,\hat p)< \rho_n(\theta,\hat p)=\sum_{i=1}^n\rho(\theta_i,\hat p)=(n-s_n)\rho(0,\hat p)+\sum_{i:\theta_i\neq 0}\rho(\theta_i,\hat p).
$$
We focus on the lower part of these inequalities  and  obtain a lower bound for 
$$
\rho(0,\hat p)= E \log N_{1,0}^{LASSO}(z)- E \log N_{v,0}^{LASSO}(z).
$$

\begin{lemma}\label{lemma:lb_lasso}
Consider the Laplace prior \eqref{eq:lasso_prior} with $\lambda>0$.  For $v=1/(1+1/r)$ the univariate Bayesian LASSO predictive distribution $\hat p$ satisfies  for any $a>0$  
$$
\rho(0,\hat p)>[1-\Phi(a)]\log\left[1+
{\left(\frac{1}{\sqrt{v}}-1\right)\frac{a}{\lambda+a}}\right]-\frac{4}{\lambda^2}-{\e^{-\lambda^2v/2}}\left(\frac{1}{2}+\log(\lambda\sqrt{2\pi v})+ \frac{2}{\lambda\sqrt{2\pi v}}\right).
$$
\end{lemma}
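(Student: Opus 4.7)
The plan proceeds from the decomposition
\[
\rho(0,\hat p)=E\log N_{0,1}^{LASSO}(Z)-E\log N_{0,v}^{LASSO}(Z)
\]
given by Lemma \ref{lemma:lasso_risk_decompose} at $\theta=0$. A preparatory computation splits the defining integral at $\mu=0$ and completes squares in each half to express
\[
N_{0,v}^{LASSO}(z)=\frac{\lambda\sqrt v}{2}\bigl[R(\sqrt v\lambda-z)+R(\sqrt v\lambda+z)\bigr],
\]
and likewise for $v=1$. The risk thus reduces to a Mills-ratio comparison between the two shifts $\lambda$ and $\sqrt v\lambda$. The key algebraic hint for the shape of the bound is the identity $\frac{1}{\sqrt v}\cdot\frac{\sqrt v\lambda+a}{\lambda+a}=1+(\frac{1}{\sqrt v}-1)\frac{a}{\lambda+a}$, which indicates that the lower bound emerges from a Mills comparison that, at the boundary $z=a$, collapses to a clean ratio of linear shifts.

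Next I would split $\rho(0,\hat p)=\int\phi(z)\log(N_{0,1}^{LASSO}(z)/N_{0,v}^{LASSO}(z))\,dz$ into the one-sided tail $\{Z>a\}$ (probability $1-\Phi(a)$), the bulk $\{|Z|\le \sqrt v\lambda\}$, and the far tail $\{|Z|>\sqrt v\lambda\}$. On $\{Z>a\}$ the pointwise lower bound
\[
\log\frac{N_{0,1}^{LASSO}(z)}{N_{0,v}^{LASSO}(z)}\ge \log\Bigl[1+\Bigl(\frac{1}{\sqrt v}-1\Bigr)\frac{a}{\lambda+a}\Bigr]
\]
is to be established by pairing the upper Mills bound $R(x)<2/(\sqrt{x^2+2}+x)$ of Lemma \ref{lem:mills} on the denominator's $\sqrt v\lambda$-shifted terms with the lower Mills bound $R(x)\ge 2/(\sqrt{x^2+4}+x)$ on the numerator's $\lambda$-shifted terms, and then invoking the algebraic identity above. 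Integration against $\phi$ on $\{Z>a\}$ then yields the first term of the claimed bound.

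The two correction terms account for the remaining regions. The $-4/\lambda^2$ piece handles the bulk, where all Mills arguments remain positive and the second-order correction $R(x)\ge 1/x-1/x^3$ (in the same spirit as the $4/(\lambda^2 v)$ remainder appearing in Theorem \ref{lemma:risk_lasso_upper}) produces a uniform lower bound of order $-1/\lambda^2$ on the integrand. The exponentially small term $\e^{-\lambda^2 v/2}\bigl(\frac{1}{2}+\log(\lambda\sqrt{2\pi v})+2/(\lambda\sqrt{2\pi v})\bigr)$ arises from the far tail $\{|Z|>\sqrt v\lambda\}$ on which $\sqrt v\lambda-z$ changes sign and only crude bounds apply; the Gaussian tail bound $1-\Phi(\sqrt v\lambda)\le \e^{-\lambda^2 v/2}/(\sqrt v\lambda\sqrt{2\pi})$ multiplied by a logarithmic control of $|\log N_{0,v}^{LASSO}(z)|$ there (via Jensen-type bounds of the kind already used in Section \ref{sec:noise_lasso}) produces exactly the parenthesised factor.

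The main obstacle is the pointwise Mills-ratio comparison on $\{Z>a\}$. A naive first-order substitution $R(x)\approx 1/x$ gives $N_{0,1}^{LASSO}/N_{0,v}^{LASSO}\approx 1-(1-v)z^2/(v(\lambda^2-z^2))$, which is \emph{less} than $1$ off the origin, so a direct substitution would reverse the desired inequality. The $O(1/\lambda)$ rate of the lower bound must therefore emerge from a non-cancelling combination of upper and lower Mills bounds deployed asymmetrically on opposite sides of the ratio. Delivering the clean closed form $1+(1/\sqrt v-1)a/(\lambda+a)$, rather than a messier algebraic expression, will require discarding or bounding from below the less favourable of the two Mills summands on each side and tightening the comparison exactly at the boundary $z=a$, exploiting the identity displayed above.
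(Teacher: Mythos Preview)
Your proposal has a genuine gap at exactly the point you flag as the ``main obstacle.'' You want a pointwise inequality
\[
\log\frac{N_{0,1}^{LASSO}(z)}{N_{0,v}^{LASSO}(z)}\ge \log\Bigl[1+\Bigl(\tfrac{1}{\sqrt v}-1\Bigr)\tfrac{a}{\lambda+a}\Bigr]\quad\text{on }\{z>a\},
\]
to be obtained by mixing upper and lower Mills bounds asymmetrically. But as you yourself observe, the first-order Mills approximation gives a ratio \emph{below} $1$, and the second-order corrections (of size $O((\lambda+z)^{-2})$ from the lower Mills bound on the numerator) are negative and do not vanish at $z=a$. So the pointwise claim is not merely unproven---it is likely false, and no amount of ``discarding the less favourable summand'' will rescue it without a further negative correction. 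Your region decomposition is also inconsistent: the sets $\{Z>a\}$, $\{|Z|\le\sqrt v\lambda\}$, $\{|Z|>\sqrt v\lambda\}$ overlap, and your attribution of the $-4/\lambda^2$ term to ``the bulk'' and the exponential term to ``the far tail'' of the \emph{ratio} integral is not where those terms actually come from.

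The paper avoids the pointwise ratio comparison entirely. It bounds the two pieces of $\rho(0,\hat p)=E\log N_{0,1}^{LASSO}(Z)-E\log N_{0,v}^{LASSO}(Z)$ \emph{separately}: a pointwise lower bound on $\log N_{0,1}^{LASSO}$ (valid for all $z$, using the lower Mills bound; this is the source of the $-4/\lambda^2$, integrated over \emph{all} $z$), and a pointwise upper bound on $\log N_{0,v}^{LASSO}$ split according to $|z|\lessgtr\lambda\sqrt v$ (the far-tail piece of \emph{this} bound, not of the ratio, produces the $\e^{-\lambda^2 v/2}(\cdots)$ term). After combining, the leading contribution is
\[
E\log\frac{1+|Z|/(\lambda\sqrt v)}{1+|Z|/\lambda},
\]
whose integrand is \emph{nonnegative} for every $z$ and monotone increasing in $|z|$. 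The threshold $a$ enters only at this final step, by the trivial move of bounding a nonnegative integrand by zero on $\{|z|<a\}$ and by its value at $|z|=a$ on the complement---which immediately gives the algebraic identity you spotted. No delicate pointwise ratio comparison is needed.
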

\proof

First, we recall the lower bound for $E\log N_{1,0}^{LASSO}(z)$ in \eqref{eq:lb2} obtained in Section \ref{sec:noise_lasso}. 
$$
E \log N_{1,0}^{LASSO}(Z)>- E \log \left(1+\frac{|z|}{\lambda}\right)+E  \log\frac{2}{\sqrt{\frac{4}{(\lambda+|z|)^2}+1}+1}.
$$
Now, we obtain an upper bound for $E \log N_{v,0}^{LASSO}(z)=E\log[\frac{\lambda}{2}(I_1^v+I_2^v)]$ using similar ideas as in Section \ref{sec:noise_lasso}. Recall the notation $I_1^v$ and $I_2^v$ in \eqref{eq:Is}  and $\mu_1$ and $\mu_2$ in \eqref{eq:mus}. These quantities now tacitly assume  $\theta=0$.
We again consider two cases (a) $z> 0$ and (b) $z\leq0$ but we split them further depending on whether $|z|>\lambda\sqrt{v}$ or $|z|\leq\lambda\sqrt{v}$. In the case (a) the term $I_1^v$ dominates $I_2^v$ and  when $\mu_1\leq 0$ (i.e. $z\leq \lambda\sqrt{v}$) we can use the upper part in the Mills ratio bounds \eqref{eq:mills_mu1}. Similarly, in the case (b) when $\mu_2<0$ (i.e. $z>-\lambda\sqrt{v}$) we can use the upper part in the Mills ratio bound \eqref{eq:mills_mu2}. This yields  
\begin{align*}
E  \mathbb I(|z|&\leq \lambda\sqrt{v})\log \left[\frac{\lambda}{2}(I_1^v+I_2^v)\right]\\
&\leq  -E\,\mathbb{I}(|z|\leq \lambda\sqrt{v}) \log \left(1+\frac{|z|}{\lambda\sqrt{v}}\right)+E\,\mathbb{I}(|z|\leq \lambda\sqrt{v})   \log\frac{2}{\sqrt{\frac{2}{(\lambda\sqrt{v}+|z|)^2}+1}+1}\\
&\leq -E \log \left(1+\frac{|z|}{\lambda\sqrt{v}}\right) +E\,\mathbb{I}(|z|> \lambda\sqrt{v}) \log \left(1+\frac{|z|}{\lambda\sqrt{v}}\right)
+  \log\frac{2}{\sqrt{\frac{1}{2\lambda^2v}+1}+1}.
\end{align*}
Next we find that (using the fact that $|z|$ has a folded normal distribution with a density $2/\sqrt{2\pi}\e^{-z^2/2}$)
$$
E\,\mathbb{I}(|z|> \lambda\sqrt{v}) \log \left(1+\frac{|z|}{\lambda\sqrt{v}}\right)<\frac{2}{\sqrt{2\pi}}\int_{\lambda\sqrt{v}}^\infty\frac{z}{\lambda\sqrt{v}}\e^{-z^2/2}\d z=\frac{2}{\sqrt{2\pi}}\frac{\e^{-\lambda^2v/2}}{\lambda\sqrt{v}}
$$
and thereby
$$
E\,\mathbb I(|z|\leq \lambda\sqrt{v})\log N_{v,0}^{LASSO}(z)<-E \log \left(1+\frac{|z|}{\lambda\sqrt{v}}\right)+\frac{2}{\sqrt{2\pi}}\frac{\e^{-\lambda^2v/2}}{\lambda\sqrt{v}}.
$$
For the remaining scenario when $|z|>\lambda\sqrt{v}$ we bound (using the Gaussian tail bound $(1-\Phi(x))\leq \e^{-x^2/2}$ for $x>0$)
\begin{align*}
E\,\mathbb I(|z|> \lambda\sqrt{v})\log  N_{0,v}^{LASSO}(z) \leq& \log(\lambda \sqrt{ 2\pi v})(1-\Phi(\lambda\sqrt{v}))\\
&+  E [\,\mathbb{I}(z>\lambda\sqrt{v})\mu_1^2/(2v)+ \mathbb{I}(z<-\lambda\sqrt{v})\mu_2^2/(2v)] \\
\leq&  \log(\lambda \sqrt{2\pi v})\e^{-\lambda^2v/2}+ E  (|z|-\lambda\sqrt{v})^2\mathbb I(|z|\geq \lambda\sqrt{v})/2.
\end{align*}
We note that
$$
E  (|z|-\lambda\sqrt{v})^2\mathbb I(|z|\geq \lambda\sqrt{v})=\frac{2}{\sqrt{2\pi}}\int_{\lambda\sqrt{v}}^\infty(z-\lambda\sqrt{v})^2\e^{-z^2/2}<\frac{2\e^{-\lambda^2v/2}}{\sqrt{2\pi}}\int_0^\infty y^2\e^{-y^2/2}={\e^{-\lambda^2v/2}}.
$$
Putting the bounds together, we have 
$$
\rho(0,\hat p)>E \log \left(\frac{1+\frac{  |z|}{\lambda\sqrt{v}}}{1+\frac{|z|}{\lambda}}\right)-{\e^{-\lambda^2v/2}}\left(\frac{1}{2}+\log(\lambda\sqrt{2\pi v})+\frac{2}{\lambda\sqrt{2\pi v}}\right)
+E  \log\frac{2}{\sqrt{\frac{4}{(\lambda+|z|)^2}+1}+1}.
$$
We use   Lemma \ref{lemma:aux_function} to find that
$$
E  \log\frac{\sqrt{\frac{4}{(\lambda +|z|)^2}+1}+1}{2}<\log\frac{\sqrt{\frac{4}{ \lambda^2 }+1}+1}{2}<\frac{\sqrt{\frac{4}{ \lambda^2 }+1}-1}{2}<\frac{4}{\lambda^2}.
$$
Next,   for any $a>0$
$$
E \log \left(1+\frac{\left(\frac{1}{\sqrt{v}}-1\right)\frac{  |z|}{\lambda}}{1+\frac{|z|}{\lambda}}\right)> P(|z|\geq a)\log\left(1+
{\left(\frac{1}{\sqrt{v}}-1\right)\frac{a}{\lambda+a}}\right).
$$
Altogether, we have for any $a>0$
$$
\rho(0,\hat p)>P(|z|\geq a)\log\left[1+
{\left(\frac{1}{\sqrt{v}}-1\right)\frac{a}{\lambda+a}}\right]-\frac{4}{\lambda^2}-{\e^{-\lambda^2v/2}}\left(\frac{1}{2}+\log(\lambda\sqrt{2\pi v})+ \frac{2}{\lambda\sqrt{2\pi v}}\right).
$$

\section{Proof of Theorem \ref{thm:ssl}}\label{sec:proof_thm:ssl}

Recall the Spike-and-Slab LASSO prior
$$
\pi(\theta)=\eta\pi_1(\theta)+(1-\eta)\pi_0(\theta)
$$
where $\pi_0(\mu)=\lambda_0/2\e^{-\lambda_0|\mu|}$ and $\pi_1(\mu)=\lambda_1/2\e^{-\lambda_1|\mu|}$ for $\lambda_1<\lambda_0$.
We also recall the definitions \eqref{eq:Is} and \eqref{eq:mus} but we now explicitly state their dependence on $\lambda$. We define
$$
I_1^v(\lambda)=\sqrt{v}\frac{\Phi(\mu^v_1(\lambda)/\sqrt{v})}{\phi(\mu^v_1(\lambda)/\sqrt{v})} \quad\text{and}\quad I_2^v(\lambda)=\sqrt{v}\frac{\Phi(-\mu^v_2(\lambda)/\sqrt{v})}{\phi(-\mu^v_2(\lambda)/\sqrt{v})}
$$
where
$$
\mu^v_1(\lambda)=z\sqrt{v}+\theta-v\lambda \quad\text{and}\quad  \mu^v_2(\lambda)=z\sqrt{v}+\theta+v\lambda.
$$
We denote the rescaled ratio of two marginal likelihoods $\frac{\lambda_1}{\lambda_0}\frac{m_0(x)}{m_1(x)}$ for $x=z+\theta$ by
$$
R_{\lambda_0,\lambda_1}(z)=\frac{I_1^1(\lambda_0)+I_2^1(\lambda_0)}{I_1^1(\lambda_1)+I_2^1(\lambda_1)}.
$$
We can use Lemma \ref{lemma:decompose} to decompose the prediction risk
\begin{equation}
\rho(\theta,\hat p)= \rho(\theta,\hat p_1)+ E \log  N^{SS}_{\theta,1}(z)
- E \log N^{SS}_{\theta,v}(z)\label{eq:risk_SSL}
\end{equation}
where
$$
  N_{\theta,v}^{SS}(z)=1+\frac{1-\eta}{\eta}\frac{\lambda_0}{\lambda_1} R_{\lambda_0,\lambda_1}(z).
$$
Alternatively, we could write
\begin{equation}
\rho(\theta,\hat p)= \rho(\theta,\hat p_0)+ E \log  \wt N^{SS}_{\theta,1}(z)
- E \log \wt N^{SS}_{\theta,v}(z)\label{eq:risk_SSL2}
\end{equation}
where
$$
 \wt N_{\theta,v}^{SS}(z)=1+\frac{\eta}{1-\eta}\frac{\lambda_1}{\lambda_0}\frac{1}{ R_{\lambda_0,\lambda_1}(z)}.
$$
We will utilize both of these characterizations. The decomposition \eqref{eq:risk_SSL} is useful when the observed data $Y$ is large in magnitude, because we would expect the slab risk to be the dominant term (according to Lemma \ref{lemma:simple_bound_risk}). The opposite is true when $Y$ is small. While the upper bound on the risk in Lemma \ref{lemma:simple_bound_risk} uses an average mixing weight $\Delta_\eta(Y)$,  we pause with averaging over the distribution $\pi(Y\C\theta)$ and bound the Kullback-Leibler loss in two regimes depending on the magnitude of $Y$. Below, we show that $(n-s_n)\rho(0,\hat p)\lesssim s_n$ and $s_n\sup_{\theta\in\R}\rho(\theta,\hat p)\lesssim (1-v)s_n\log (n/s_n)$ which will conclude the proof of the theorem.
Throughout this section, we denote the observed data $Y$ simply with $x$ and thereby assume $x\sim N(\theta,1)$.

\subsection{The case when $\theta=0$.}\label{sec:SSL_case0}
We utilize the expression \eqref{eq:risk_SSL2} and (using the risk expression at $\theta=0$ for Bayesian LASSO from Section \ref{sec:noise_lasso}) we find that 
\begin{align}
\rho(0,\hat p)\leq \frac{\sqrt{2}}{\lambda_0\sqrt{\pi v}}+\frac{4}{\lambda_0^2v}+ \log\left(1+\frac{\eta}{1-\eta}E_{x\C\theta=0}\frac{m_1(x)}{m_0(x)}\right)\label{eq:upper_bound_SSL_zerp}
\end{align}
In order to bound the expectation $E_{x\C\theta=0}\frac{m_1(x)}{m_0(x)}$, we consider $4$ possible cases.
\begin{itemize}
\item[(1)]When $x=z>\lambda_0>0$, we have $I_1^1(\lambda)>I_2^1(\lambda)$ and because $\mu_1^1(\lambda_0)=z-\lambda_0>0$ (and thereby $\Phi(\mu_1^1(\lambda_0))>1/2$) we have
$$
\frac{m_1(x)}{m_0(x)}=2\frac{\lambda_1}{\lambda_0}\times \frac{I_1^1(\lambda_1)}{I_1^1(\lambda_0)}<4\frac{\lambda_1}{\lambda_0}\times\frac{\phi(\mu_1^1(\lambda_0))}{\phi(\mu_1^1(\lambda_1))}=
4 \frac{\lambda_1}{\lambda_0}\e^{x(\lambda_0-\lambda_1)-\lambda_0^2/2+\lambda_1^2/2}.
$$
Then
$$
\int_{\lambda_0}^\infty \phi(x)\frac{m_1(x)}{m_0(x)}\d x< \frac{4\lambda_1\e^{-\lambda_0^2/2+\lambda_1^2/2+(\lambda_0-\lambda_1)^2/2}}{\lambda_0}\int_{\lambda_0}^\infty\frac{\e^{-[x-(\lambda_0-\lambda_1)]^2/2}}{\sqrt{2\pi}}\d x<
\frac{4\lambda_1}{\lambda_0}\e^{-\lambda_0\lambda_1+\lambda_1^2}.
$$
\item[(2)] When $\lambda_0>x=z>0$, we can use Lemma \ref{lem:mills} (because $\mu_1^1(\lambda_0)=z-\lambda_0<0$) to find
$$
\frac{m_1(x)}{m_0(x)}<2\frac{\lambda_1}{\lambda_0}\times \frac{1}{\phi(\mu_1^1(\lambda_1))}\times \frac{ \sqrt{(\lambda_0-x)^2+4}+\lambda_0-x }{2}.
$$
This yields
$$
\frac{m_1(x)}{m_0(x)}<2\frac{\lambda_1}{\lambda_0}\times \sqrt{2\pi}\e^{(x-\lambda_1)^2/2}\times \frac{(1+\sqrt{2}) \max\{2,(\lambda_0-x)\}}{2}
$$
and
$$
\int_{0}^{\lambda_0} \phi(x)\frac{m_1(x)}{m_0(x)}\d x < \frac{ (1+\sqrt{2}) \max\{2,\lambda_0\}}{\lambda_0}\e^{\lambda_1^2/2}(1-\e^{-\lambda_0\lambda_1}).
$$
The next two cases are mirror images of the previous too.
\item[(3)] When $-\lambda_0<x=z\leq 0$ we have $I_1^1(\lambda)\leq I_2^1(\lambda)$ and (using Lemma \ref{lem:mills})
$$
\frac{m_1(x)}{m_0(x)}=2\frac{\lambda_1}{\lambda_0}\times \frac{1}{\phi(\mu_2^1(\lambda_1))}\times \frac{ \sqrt{(\lambda_0+x)^2+4}+\lambda_0+x }{2}.
$$
This yields
$$
\frac{m_1(x)}{m_0(x)}=2\frac{\lambda_1}{\lambda_0}\times \sqrt{2\pi}\e^{(x+\lambda_1)^2/2}\times \frac{(1+\sqrt{2}) \max\{2,(\lambda_0+x)\}}{2}
$$
and
$$
\int_{-\lambda_0}^{0} \phi(x)\frac{m_1(x)}{m_0(x)}\d x\leq \frac{ (1+\sqrt{2}) \max\{2,\lambda_0\}}{\lambda_0}\e^{\lambda_1^2/2}(1-\e^{-\lambda_0\lambda_1}).
$$
\item[(4)] When  $x=z\leq -\lambda_0<-\lambda_1$ we have $-\mu_2^1(\lambda_0)=-z-\lambda_0>0$ and thereby
\begin{align*}
\frac{m_1(x)}{m_0(x)}&=4\frac{\lambda_1}{\lambda_0} \times \frac{\phi(\mu_2^1(\lambda_0))}{\phi(\mu_2^1(\lambda_1))} = 4 \frac{\lambda_1}{\lambda_0}\e^{-x(\lambda_0-\lambda_1)-\lambda_0^2/2+\lambda_1^2/2}.
\end{align*}
and
$$
\int_{-\infty}^{-\lambda_0} \phi(x)\frac{m_1(x)}{m_0(x)}\d x< \frac{4\lambda_1\e^{-\lambda_0^2/2+\lambda_1^2/2+(\lambda_0-\lambda_1)^2/2}}{\lambda_0}\int_{-\infty}^{-\lambda_0} \frac{\e^{-[z+(\lambda_0-\lambda_1)]^2/2}}{\sqrt{2\pi}}\d z=
\frac{4\lambda_1}{\lambda_0}\e^{-\lambda_0\lambda_1+\lambda_1^2}.
$$
 \end{itemize}

Putting all the pieces together, we find that  keeping $\lambda_1=\sqrt{v}C_v$ for some $C_v>0$  we have for some $C_1>0$
$$
E_{x\C\theta=0}\frac{m_1(x)}{m_0(x)}<C_1.
$$
With  $\eta/(1-\eta)=n/s_n$  and $\lambda_0\sqrt{v} =n/s_n$  we find that 
$\rho(0,\hat p)\lesssim s_n/n$.
\subsection{The case when $\theta\neq 0$}

We consider two cases (similarly as in the proof of Theorem \ref{thm:dirac_laplace}) for some $A>0$:

{\bf Case i}: On the event $A_\eta (\theta,A,d)^c\equiv\{z:  R_{\lambda_0,\lambda_1}(z)> A(s_n/n)^{d}\}$ we have
$$
\log \wt N^{SS}_{\theta,v}(z)\leq \log \left[1+\left(\frac{\eta}{1-\eta}\right)\frac{\lambda_1}{\lambda_0}(n/s_n)^{d}/A \right].
$$

{\bf Case ii}: On the event  $A_\eta(\theta,A,d)\equiv\{z:   R_{\lambda_0,\lambda_1}(z)\leq A(s_n/n)^{d} \}$ we have
$$
\log   N^{SS}_{\theta,v}(z)\leq \log \left[1+\left(\frac{1-\eta}{\eta}\right)\frac{\lambda_0}{\lambda_1}A(n/s_n)^{-d}\right].
$$

Recall  (from \eqref{eq:KL_lasso})  that under the Laplace prior with a parameter $\lambda$, the KL loss equals (using the expression $x+y/r=z/\sqrt{v}+\theta/v$)
\begin{equation}
K_\lambda(\theta,z)=\theta^2/(2r)+ \log[\lambda/2(I_1^1(\lambda)+I_2^1(\lambda))]- E_{y\C\theta} \log[\lambda/2(I_1^v(\lambda)+I_2^v(\lambda))].\label{eq:KL_lasso}
\end{equation}
This means 
\begin{align}
\rho(\theta,\hat p)\leq& P[A_\eta(\theta,A,d)^c] \log \left[1+\left(\frac{\eta}{1-\eta}\right)\frac{\lambda_1}{  \lambda_0}(n/s_n)^{ d}/A\right] + \int_{z\in A_\eta^c (\theta,A,d)} K_{\lambda_0}(\theta,z)\phi(z)\d z\nonumber \\
&+ P[A_\eta (\theta,A,d)]\log \left[1+\left(\frac{1-\eta}{\eta}\right)\frac{  \lambda_0}{\lambda_1}A(n/s_n)^{-d}\right]+
\int_{z\in A_\eta(\theta,A,d)}K_{\lambda_1}(\theta, z)\phi(z)\d z\label{eq:essential}.
\end{align}
Now we focus on the properties of the set $A_\eta^c (\theta,A,d)$.
\begin {lemma}\label{lem:emptyset}
For $\lambda_0/\lambda_1=n/s_n$ and $\lambda_1>0$ is a fixed constant. When $0<r<1$, there exists $A>0$ such that $A_\eta(\theta,A,2v)^c=\emptyset$. When $r\in [1,\infty)$, we have
$$
A_{\eta}(\theta,A,2v)^c\subset \{x: |x|<\wt x_c\},
$$
where $\wt x_c=\lambda_1+\sqrt{4v\log [n/s_n]}$ and  $A=[\sqrt{2\pi}(1+\sqrt{2})]^{-1}$. 
\end{lemma}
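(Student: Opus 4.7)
The plan is to convert the defining inequality $R_{\lambda_0,\lambda_1}(z)>A(s_n/n)^{2v}$ into a statement about the marginal-likelihood ratio and then exploit a Gaussian-type decay. First I would use the identity $m_j(x)=\phi(x)\cdot(\lambda_j/2)[I_1^1(\lambda_j)+I_2^1(\lambda_j)]$, already invoked throughout Sections \ref{sec:separable}--\ref{sec:proof_thm:ssl}, to rewrite $R_{\lambda_0,\lambda_1}(z)=\frac{\lambda_1}{\lambda_0}\cdot\frac{m_0(x)}{m_1(x)}=\frac{s_n}{n}\cdot\frac{m_0(x)}{m_1(x)}$ with $x=z+\theta$. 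Membership $z\in A_\eta(\theta,A,2v)^c$ then becomes $m_0(x)/m_1(x)>A(n/s_n)^{1-2v}$, and the lemma reduces to showing that any $x$ violating this bound must satisfy $|x|<\wt x_c$ (or that no such $x$ exists when $r<1$).

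The core estimate I would establish is that for $\lambda_1<|x|\leq \lambda_0/2$ one has $m_0(x)/m_1(x)\leq C_0\e^{-(|x|-\lambda_1)^2/2}$ with an explicit constant $C_0$. The upper bound on $m_0(x)$ comes from the Birnbaum inequality $I_1^1(\lambda_0)+I_2^1(\lambda_0)\leq \frac{2}{\sqrt{(\lambda_0-|x|)^2+2}+(\lambda_0-|x|)}+\frac{2}{\sqrt{(\lambda_0+|x|)^2+2}+(\lambda_0+|x|)}$ combined with the elementary inequality $\sqrt{u^2+4}+u\leq(1+\sqrt{2})\max\{2,u\}$ already used in Section \ref{sec:SSL_case0}, yielding $m_0(x)\leq (1+\sqrt{2})\phi(x)$ up to sharp constants. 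The lower bound on $m_1(x)$ comes from the trivial observation that $\Phi(|x|-\lambda_1)\geq 1/2$ whenever $|x|>\lambda_1$, which forces $I_1^1(\lambda_1)\geq (\sqrt{2\pi}/2)\e^{(|x|-\lambda_1)^2/2}$ and hence $m_1(x)\geq (\lambda_1/4)\e^{-\lambda_1|x|+\lambda_1^2/2}$. Taking the quotient and simplifying the exponents produces the Gaussian factor, with the $\sqrt{2\pi}$ and $(1+\sqrt{2})$ descending respectively from $\phi$ and from the Birnbaum manipulation.

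For $r\geq 1$ (so $2v\geq 1$), plugging $|x|\geq \wt x_c=\lambda_1+\sqrt{4v\log(n/s_n)}$ into the exponent gives $\e^{-(|x|-\lambda_1)^2/2}\leq (s_n/n)^{2v}$, and the prefactor $s_n/n$ in the identity for $R_{\lambda_0,\lambda_1}(z)$ then delivers $R_{\lambda_0,\lambda_1}(z)\leq C_0(s_n/n)^{2v+1}\leq A(s_n/n)^{2v}$. The displayed constant $A=[\sqrt{2\pi}(1+\sqrt{2})]^{-1}$ is obtained by careful bookkeeping of the $\sqrt{2\pi}$ and $(1+\sqrt{2})$ factors (the $s_n/n$ prefactor absorbs the otherwise lingering $\lambda_1^{-1}$, valid once $n/s_n$ is large enough). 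The auxiliary range $|x|>\lambda_0/2$, where the Mills-based bound on $m_0$ degrades, is handled separately using the Laplace tail: there $m_0(x)\lesssim (\lambda_0/2)\e^{-\lambda_0|x|+\lambda_0^2/2}$ while $m_1(x)\gtrsim \e^{-\lambda_1|x|}$, so $m_0(x)/m_1(x)$ is exponentially small in $\lambda_0-\lambda_1\to\infty$ and the inequality holds trivially.

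For $r<1$ (so $2v<1$) the inequality must hold for every $z$; the previous paragraph already covers $|x|\geq \wt x_c$. On the compact complement $\{|x|<\wt x_c\}$ the ratio $m_0(x)/m_1(x)$ is uniformly bounded by some constant $C_1$ (both marginals are continuous and bounded away from $0$ and $\infty$ on that set once $\lambda_0$ is large), whence $R_{\lambda_0,\lambda_1}(z)\leq C_1(s_n/n)=C_1(s_n/n)^{1-2v}\cdot(s_n/n)^{2v}$; since $(s_n/n)^{1-2v}\to 0$ we may enlarge $A$ so that $R_{\lambda_0,\lambda_1}(z)\leq A(s_n/n)^{2v}$ holds on all of $\R$, yielding $A_\eta(\theta,A,2v)^c=\emptyset$. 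The main obstacle throughout is recovering the sharp constant in the $r\geq 1$ case: a naive use of the crude Mills bound $R(u)\leq 1/u$ leaves an unwanted $1/\lambda_1$ factor, so one must route the argument through the tighter Birnbaum form $2/(\sqrt{u^2+2}+u)$ together with $\sqrt{u^2+4}+u\leq (1+\sqrt{2})\max\{2,u\}$ to place the $(1+\sqrt{2})$ exactly once and balance the $\sqrt{2\pi}$ precisely against the $\phi(x)$ factors.
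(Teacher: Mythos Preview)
Your argument for $r\geq 1$ follows essentially the same route as the paper: bound $m_0(x)/m_1(x)$ on the range $\lambda_1<|x|\ll\lambda_0$ via the Birnbaum Mills-ratio inequality for the spike factor and the trivial $\Phi(|x|-\lambda_1)\geq 1/2$ for the slab factor, producing a Gaussian decay $\e^{-(|x|-\lambda_1)^2/2}$ and hence the threshold $\wt x_c=\lambda_1+\sqrt{4v\log(n/s_n)}$. One bookkeeping difference: the paper keeps the full factor $\lambda_0/\lambda_1$ in the upper bound for $m_0/m_1$, which then cancels exactly against the $\lambda_0/\lambda_1$ in the rewriting $A_\eta^c=\{m_0/m_1>(\lambda_0/\lambda_1)A(s_n/n)^{2v}\}$ and delivers the displayed constant $A=[\sqrt{2\pi}(1+\sqrt{2})]^{-1}$ on the nose. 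You instead discard $\lambda_0$ early (bounding $m_0(x)\lesssim\phi(x)$ on $|x|\leq\lambda_0/2$), which forces you to spend the spare prefactor $s_n/n$ to absorb a lingering $1/\lambda_1$; this works only for $n/s_n$ large enough, so you recover the stated $A$ asymptotically rather than exactly.

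For $r<1$ your route is correct but needlessly elaborate, and the ``compactness'' step is shaky as phrased: the set $\{|x|<\wt x_c\}$ grows with $n$, so continuity alone does not give a constant $C_1$ uniform in $n$ (you really need to invoke your Gaussian bound again on $\lambda_1<|x|<\wt x_c$ and treat $|x|\leq\lambda_1$ separately). The paper replaces this whole paragraph by a one-line observation: the ratio $R_{\lambda_0,\lambda_1}(z)$, viewed as a function of $x=z+\theta$, is symmetric and attains its maximum at $x=0$, where it equals $R(\lambda_0)/R(\lambda_1)$. Bounding this single value via Lemma~\ref{lem:mills} gives $R_{\lambda_0,\lambda_1}(z)\leq (s_n/n)\cdot\tfrac12(\sqrt{1+4/\lambda_1^2}+1)$ for all $z$, and since $2v<1$ this is $<A(s_n/n)^{2v}$ with the explicit choice $A=\tfrac12(\sqrt{1+4/\lambda_1^2}+1)$. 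This avoids your case split, your compactness argument, and your ``enlarge $A$'' step entirely.
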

\begin{proof}
The ratio $R_{\lambda_0,\lambda_1}(z)$ as a function of $x=z+\theta$ has a maximal value at $x=0$ where (using the Mills ratio notation $R(x)=(1-\Phi(x))/\phi(x)$ and Lemma \ref{lem:mills})
$$
  R_{\lambda_0,\lambda_1}(z)\leq R_{\lambda_0,\lambda_1}(-\theta)= 
\frac{R(\lambda_0)}{R(\lambda_1)}<\frac{\lambda_1}{\lambda_0}\frac{\sqrt{1+4/\lambda_1^2}+1}{\sqrt{1+2/\lambda_0^2}+1}<
\frac{s_n}{n}\frac{\sqrt{1+4/\lambda_1^2}+1}{2}.
$$
This means that for $A=\frac{\sqrt{1+4/\lambda_1^2}+1}{2}$ we have $ R_{\lambda_0,\lambda_1}(z)< As_n/n$ which is strictly smaller than $A(s_n/n)^{2v}$ when $0<v<1/2$ (i.e. when $0<r<1$). Now we look into the case when $d=2v\geq 1$. Because the ratio $m_0(x)/m_1(x)$ is symmetrical around zero and monotone decreasing on $(0,\infty)$, we have
$$
A_{\eta}(\theta,A,d)^c=\left\{x=z+\theta:\frac{m_0(x)}{m_1(x)}>\frac{\lambda_0}{\lambda_1} A(s_n/n)^d \right\}=\{x: |x|\leq x_c\},
$$
where
$$
\frac{m_0(x_c)}{m_1(x_c)}=\lambda_0/\lambda_1 A(s_n/n)^d.
$$
We now find an upper bound to $\frac{m_0(x)}{m_1(x)}$. We first consider the case when $\lambda_1<x<\lambda_0-2$. 
On this interval (similarly as in the case (2) in Section \ref{sec:SSL_case0}) we obtain  
$$
\frac{m_0(x)}{m_1(x)}\leq \frac{\lambda_0}{\lambda_1 }\frac{2 \phi(\mu_1^1(\lambda_1))}{\sqrt{(\lambda_0-x)^2+2}+\lambda_0-x}\leq
  \frac{\lambda_0}{\lambda_1 }
  \frac{2\phi(\mu_1^1(\lambda_1))}{\min\{2,\lambda_0-x\}(1+\sqrt{2})}.
$$
Setting this upper bound equal to $A(s_n/n)^d$ yields
$$
A_{\eta}(\theta,A,d)^c\subset \{x: |x|<\wt x_c\},
$$
where $\wt x_c=\lambda_1+\sqrt{2d\log [n/s_n]}$ and  $A=[\sqrt{2\pi}(1+\sqrt{2})]^{-1}$. 
When $0<x<\lambda_1$, we have 
$$
\frac{m_0(x)}{m_1(x)}> \frac{\lambda_0}{2\lambda_1 }\frac{\sqrt{(\lambda_1-x)^2+2}+\lambda_1-x}{\sqrt{(\lambda_0-x)^2+4}+\lambda_0-x}.
$$ 
for $\lambda_0=n/s_n$. For $d\geq 1$ and suitable $A>0$ we will have $\{-\lambda_1,\lambda_1\}\subset A_{\eta}(\theta,A,d)^c$. Because $\wt x_c<\lambda_0$ when $\lambda_0=n/s_n$, we conclude that $A_{\eta}(\theta,A,d)^c\subset \{x: |x|<\wt x_c\}.$
\end{proof} 
Now we continue with the proof of Theorem {\ref{thm:ssl}}.
\subsubsection{When $r\in (0,1)$}
 Going back to \eqref{eq:essential}, we find  that for $A=\frac{\sqrt{1+4/\lambda_1^2}+1}{2}$ and $\lambda_1=\sqrt{v}C_v$ for some $C_v>0$ and $\lambda_0\sqrt{v}=n/s_n$ and $\eta/(1-\eta)=1$ we have for $d=2v$
$$
\rho(\theta,\hat p)\leq \log \left[1+A (n/s_n)^{2- 2v}/C_v\right]+\rho(\theta,\hat p_1),
$$
where $\hat p_1$ is the predictive distribution under the slab Laplace prior.  From the proof of Theorem  \ref{lemma:risk_lasso_upper} in Section \ref{sec:upper_lasso_bound}, we know that
$$
\rho(\theta,\hat p)\leq \log \left[1+ A(n/s_n)^{2- 2v}/C_v \right]+ \log(\lambda_14\sqrt{2\pi/v})+\lambda_1^2/2+\lambda_1\sqrt{2/\pi}+4/\lambda_1^2.
$$
 Keeping $\lambda_1=\sqrt{v}C_v$ for some $C_v>0$, the first term is the dominant term  and $\rho(\theta,\hat p)\lesssim (1-v)\log(n/s_n)$.

 \subsubsection{When $r\in [1,\infty)$}

Using \eqref{eq:essential}, we need to make sure that the event $A_{\eta}(\theta,A,d)^c$ for $d=2v$ is small enough when the signal (and  $|x|$) is large so that the spike predictive distribution gets silenced.  We have from Lemma \ref{lem:emptyset}  when $|\theta|>c\sqrt{\log (n/s_n)}$ for some $c > 2d$ (because  $|z+\theta|>|\theta|-|z|$)
$$
P(A_{\eta}(\theta,A,d)^c)\leq P(|z|>|\theta|-\wt x_c)\leq 2 \e^{-[(c-\sqrt{2d})\sqrt{\log (n/s_n)}-\lambda_1]^2/2}
\leq 2\e^{\lambda_1^2/4}\e^{-(c-\sqrt{2d})^2\log (n/s_n)}.
$$
With  $(c-\sqrt{2d})^2\geq2$ we have  $P(A_{\eta}(\theta,A,d)^c)\lesssim (s_n/n)^2$.  Using similar steps as in the proof of Theorem \ref{lemma:risk_lasso_upper} in Section \ref{sec:upper_lasso_bound} we find
\begin{align}
K_{\lambda }(z)&\leq \frac{\theta^2}{2r}+\log(2/\sqrt{v})+\frac{(|z+\theta|-\lambda )^2}{2}-\frac{(|z+\theta/\sqrt{v}|-\lambda \sqrt{v})^2}{2}-\log\Phi(-\lambda \sqrt{v})\nonumber\\
& \leq \frac{\theta^2}{2r}+\log(\lambda4\sqrt{2\pi/v})+\frac{(|z+\theta|-\lambda )^2}{2}-\frac{(|z+\theta/\sqrt{v}|-\lambda \sqrt{v})^2}{2} +\lambda^2v/2+ \frac{1}{\lambda^2}.\label{eq:K}
\end{align}
On the set $A_{\eta}(\theta,A,d)^c$  we have $|z+\theta|\leq \wt x_c$ and 
$$
K_{\lambda_0 }(z)\leq  \frac{\theta^2}{2r}+\log(\lambda_04\sqrt{2\pi/v})+{ \wt x_c^2}+(1+v)\lambda_0^2/2 + \frac{1}{\lambda_0^2}.
$$
With $\lambda_0=n/s_n$, the dominant term among the last three terms above is $\lambda_0^2(1+v)$. Above, we have shown that when the signal is strong enough we have $P(A_{\eta}(\theta,A,d)^c)\lesssim 1/\lambda_0^2=(s_n/n)^2$. This means
$$
\int_{A_{\eta}(\theta,A,d)^c} K_{\lambda_0 }(z)\phi(z)\d z\lesssim \theta^2/(2r)P(A_{\eta}(\theta,A,d)^c)+O(1).
$$
Combined with $\eqref{eq:essential}$ the term $\frac{\theta^2}{2r}P(A_{\eta}(\theta,A,d)^c$ can be combined with the term $\frac{\theta^2}{2r}P(A_{\eta}(\theta,A,d))$ contained inside the slab part. Altogether, we obtain an upper bound that is of the order $(1-v)\log (n/s_n)$.

\section {Proofs of Lemmata}

\subsection{Proof of Lemma \ref{lemma:lasso_risk_decompose}}\label{sec:lemma:lasso_risk_decompose}
We have the following risk definition
\begin{equation}
\rho(\theta,\hat p)=\int \pi(Y\C\theta)\int \pi(\wt Y\C\theta )\log [\pi(\wt Y\C\theta)/\hat p(\wt Y\C Y)]\d \wt Y\d Y\label{eq:rho_lasso}.
\end{equation}
For the marginal likelihood $m(Y)=\int \pi(Y\C\mu)\pi_1(\mu\C\lambda)\d\mu$, we have
$$
\hat p(\wt Y\C Y)=\frac{\int \pi(\wt Y\C\mu)\pi(\mu\C Y)\d\mu}{m(Y)}=\frac{\e^{-\wt Y^2/(2r)}\int\exp \{\mu(\wt Y/r+Y)-\mu^2/2(1+1/v)\}\pi_1(\mu\C\lambda)\d\mu}{\int \exp\{\mu Y-\mu^2/2\}\pi_1(\mu\C\lambda)\d\mu }
$$
and
\begin{equation}
\frac{\pi(\wt Y\C\theta)}{\hat p(\wt Y\C Y)}=\frac{\exp(\wt Y\theta/r-\theta^2/2r)\int \exp\{\mu Y-\mu^2/2\}\pi_1(\mu\C\lambda)\d\mu }{
\int\exp \{\mu(\wt Y/r+Y)-\mu^2/2(1+1/r)\}\pi_1(\mu\C\lambda)\d\mu}.\label{eq:ratio_inside_KL}
\end{equation}
Then
\begin{align}
\log\frac{\pi(\wt Y\C\theta)}{\hat p(\wt Y\C Y)}=& \wt Y\theta/r-\theta^2/2r+ \log \int \exp\{\mu Y-\mu^2/2\}\pi_1(\mu\C\lambda)\d\mu\nonumber\\
&-
\log \int\exp \{\mu(\wt Y/r+Y)-\mu^2/2(1+1/r)\}\pi_1(\mu\C\lambda)\d\mu.\label{eq:KL_lasso}
\end{align}
The expectation of the first term with respect to  $\wt Y\sim N(\theta,r)$ is $\theta^2/(2r)$.
Since $Y\sim N(\theta,1)$ and $\wt Y\sim N(\theta,r)$, the expectation of the other two terms can be taken with respect to $Y+\wt Y/r\sim N(\theta/v, 1/v)$ where
$v=1/(1+1/r)$. This is the same as taking an expectation with respect to $Z/\sqrt{v}+\theta/v$. This concludes the proof.

\subsection{Proof of Lemma \ref{lemma:decompose}}\label{sec:proof_lemma:decompose}
The Kullback-Leibler loss can be written as
\begin{align*}
L(\theta,\hat p)&=\int \pi(\wt Y\C \theta)\log\frac{\pi(\wt Y\C\theta)}{\Delta_\eta(Y)\hat p_1(\wt Y\C Y)+ (1-\Delta_\eta(Y))\hat p_0(\wt Y\C Y)}\d \wt Y\\
&=L(\theta,\hat p_1)-\log[\Delta_\eta(Y)]-\int \pi(\wt Y\C\theta)\log\left[1+\frac{1-\Delta_\eta(Y)}{\Delta_\eta(Y)}\frac{\hat p_0(\wt Y\C Y)}{\hat p_1(\wt Y\C Y)}\right]\d \wt Y.
\end{align*}
Next, note that 
$$
-\log\left[{\Delta_\eta(Y)}\right]=\log\left(1+\frac{(1-\eta)}{\eta} \frac{m_0(Y)}{m_1(Y)}\right)
$$
and 
$$
\frac{1-\Delta_\eta(Y)}{\Delta_\eta(Y)}\frac{\hat p_0(\wt Y\C Y)}{\hat p_1(\wt Y\C Y)}=\frac{(1-\eta)}{\eta} \frac{m_0(Y)}{m_1(Y)}\frac{\hat p_0(\wt Y\C Y)}{\hat p_1(\wt Y\C Y)}.
$$
Next,
$$
\frac{m_0(Y)}{m_1(Y)}\frac{\hat p_0(\wt Y\C Y)}{\hat p_1(\wt Y\C Y)}=\frac{\int \exp(\mu(Y+\wt Y/r)-\mu^2/2(1+1/r))\pi_0(\mu)\d\mu }{
\int \exp(\mu(Y+\wt Y/r)-\mu^2/2(1+1/r))\pi_1(\mu)\d\mu}
$$
To obtain $\rho(\theta,\hat p)$ we take an expectation over $Y\sim N(\theta,1)$ since $Y+\wt Y/r\sim N(\theta(1+1/r), 1+1/r)$
which is the same as taking an expectation with respect to $\theta/v+Z/\sqrt{v}$ for $Z\sim N(0,1)$ and $v=1/(1+1/r)$.
Noting that
$
-E_{Y\C\theta}\log\Delta_\eta(Y)=E_z \log    N_{\theta,1}^{SS}(z)
$
we obtain the desired expression.

\subsection{Proof of Lemma \ref{lemma:risk_hierarchical}}\label{sec:proof_lemma:risk_hierarchical}
Because, given $\eta$, the Kullback-Leibler loss is separable  
$$
L(\theta ,\hat p(\cdot\C   Y,\eta))=\sum_{i=1}^nL(\theta_i ,\hat p(\cdot\C  Y_i,\eta)),
$$
taking the expectation of both sides of \eqref{eq:bound_KL_sep} over the distribution $\pi( Y\C\theta)$ yields
\begin{equation}
\rho(\theta,\hat p)\leq   (n-s_n)E_{ Y\C\theta} E_{\eta\C  Y} L(0,\hat p(\cdot\C  Y_i,\eta))   +   E_{ Y\C\theta} E_{\eta\C  Y}  \sum_{i:\theta_i\neq 0}  L(\theta_i,\hat p(\cdot\C  Y_i,\eta)).\label{eq:risk_bound_double}
\end{equation}
We use a similar expression as in the proof of Lemma   \ref{lemma:decompose} in Section \ref{sec:proof_lemma:decompose}. We first focus on the case when $\theta_i\neq 0$.
From now on, we will be using simpler notation $x=Y$ and $y=\wt Y$. We will also denote with $\theta$ the unknown true parameter value and with $\mu$ the random vector used to estimate $\theta$.
$$
\wt g(x,y,\theta,\eta)=\log\frac{\pi(y\C\theta)}{\hat p(y\C x)}
$$
Then
\begin{equation}
\wt g(x,y,\theta,\eta)=\log \frac{\phi((y-\theta)/\sqrt{r})}{\phi(y/\sqrt{r})}-\log[1-\Delta_\eta(x)]-\log \left[1+\frac{\Delta_\eta(x)}{1-\Delta_\eta(x)}\frac{\hat p_1(y\C x)}{\hat p_0(y\C x)}\right].\label{eq:gtilde}
\end{equation}
which means  
$$
L(\theta_i ,\hat p(\cdot\C  x_i,\eta)) =E_{y_i\C\theta_i}\wt g(x_i,y_i,\theta_i,\eta).
$$ 
We also denote
$$
\frac{1}{1-{\Delta_\eta(x)}}=1+\frac{\eta}{ 1-\eta } \frac{m_1(x)}{m_0(x)}=1+\frac{\lambda}{2}\frac{\eta}{1-\eta}\wt R_1(x)
$$
and 
$$
1+\frac{\Delta_\eta(x)}{1-\Delta_\eta(x)}\frac{\hat p_1(y\C x)}{\hat p_0(y\C x)}=1+\frac{\lambda}{2}\frac{\eta}{1-\eta}\wt R_v(x,y)
$$
where $\wt R_1(x)=\wt R_{v=1}(x,y)$ where for $v=1/(1+1/r)$
$$
\wt R_{v}(x,y)=\int \exp\left[\mu\left(x+y\frac{1-v}{v}\right)-\frac{\mu^2}{2v}-\lambda|\mu|\right]\d\mu.
$$
Similarly as in the proof of Theorem \ref{thm:dirac_laplace}, we consider two cases (i) and (ii) depending on the magnitude $|x|$. Note that 
unlike in the proof of Theorem \ref{thm:dirac_laplace}, here we are explicitly using notation involving $z$ and $y$ as opposed to $z$.  
Section \ref{sec:casei} and \ref{sec:caseii} show upper bounds on the risk which separate parameter $\eta$ from $z$ (and inherently also $x$ and $y$).
We can use the same ideas as in Section \ref{sec:casei}, \ref{sec:caseii} and \ref{sec:cases_combined} to find that  
$$
\log\frac{1+\frac{\lambda}{2}\frac{\eta}{1-\eta}\wt R_1(x)}{1+\frac{\lambda}{2}\frac{\eta}{1-\eta}\wt R_v(x,y)}<C(\lambda,v,x,y)+5(1-v)\log\left(\frac{1-\eta}{\eta}\right).
$$
Then because the term $C(\lambda,v,x,y)$ does not depend on $\eta$, we can write
$$
 E_{ Y\C\theta} E_{\eta\C  Y}E_{y_i\C\theta_i}\wt g(x_i,y_i,\theta_i,\eta)\leq  E_z  \wt C(\lambda,v,z)+     5(1-v) E_{\eta\C  Y}\log\left(\frac{1-\eta}{\eta}\right)
$$
where the term $\wt C(\lambda,v,z)$ contains aspects of the bounds \eqref{eq:bound1} and \eqref{eq:bound2} that do not involve $\eta$.
After taking the expectation over $Z\sim N(0,1)$ we arrive at a version of the  \eqref{eq:bound_total}, only with $\log[(1-\eta)/\eta]$ replaced by its conditional expectation.
It also follows from \eqref{eq:bound_total} that
\begin{equation}
C(\lambda,v)\equiv E_z \wt C(\lambda,v,z)= \log \left(4\sqrt{\frac{2\pi}{v}}\right)  +\frac{1}{\lambda^2 } + \frac{\lambda^2 }{2}+\frac{2\lambda^2 +1}{r} +\lambda (1-\sqrt{v})\sqrt{\frac{2}{\pi}} +\log(2\lambda+\lambda^2+1).
.\label{eq:Clambda}
\end{equation}
When $\theta_i=0$, we have from \eqref{eq:gtilde}
$$
\wt g(x,y,\theta,\eta)<-\log[1-\Delta_\eta(x)]
$$
and using Jensen's inequality $E\log X<\log EX$ we find that
$$
 E_{ Y\C\theta}E_{\eta\C Y} \wt g(x,y,\theta,\eta)<   \log\left[1+E_{ Y\C\theta}E\left(\frac{\eta}{1-\eta}\C Y\right)\frac{m_1(x_i)}{\phi(x)} \right]. 
$$
The conditional expectation $E\left(\frac{\eta}{1-\eta}\C Y\right)$ will be (for large $n$) very similar to the conditional expectation $E\left(\frac{\eta}{1-\eta}\C Y_{\backslash i}\right)$,
where $ Y_{\backslash i}$ denotes the sub-vector of $ Y$ after excluding the $i^{th}$ coordinate. For $\lambda>2$, we can use the sandwich relation \eqref{eq:sandwich} to find that
$$
\frac{E\left(\frac{\eta}{1-\eta}\C Y\right)}{E\left(\frac{\eta}{1-\eta}\C Y_{\backslash i}\right)}\leq 
\frac{a+E[s_n(\mu)\C  Y]+1}{a+E[s_n(\mu_{\backslash i})\C  Y_{\backslash i}]}\leq \frac{a+E[s_n(\mu)\C  Y]+1}{a+E[s_n(\mu)\C  Y]-1}<1+\frac{2}{a-1}.
$$
Then we can write
$$
 E_{ Y\C\theta}E\left(\frac{\eta}{1-\eta}\C Y\right)\frac{m_1(x_i)}{\phi(x_i)}\leq  E_{ Y_{\backslash i}\C\theta}E\left(\frac{\eta}{1-\eta}\C Y_{\backslash i}\right)\left(1+\frac{2}{a-1}\right)\int m_1(x_i) \d x_i.
$$
The desired statement is obtained after noting that  (because $E\e^{c x}=\e^{-c^2/2}$ for $X\sim N(0,1)$)
$$
\int m_1(x_i) \d x_i=\int_\mu {\e^{-\mu^2/2}}E\e^{\mu x_i}\pi_1(\mu)\d\mu=1.
$$
 
\subsection{Proof of Lemma \ref{lemma:posterior_odds} }\label{sec:proof_lemma:posterior_odds}

Since the parameter $\eta$ is hierarchically separated from the data by $\mu\sim \pi(\mu)$, we can write
$$
\pi(\eta \C  Y)={\int\pi(\eta\C\mu)\pi(\mu\C  Y)}\d\mu.
$$
Then
$$
E\left(\frac{\eta}{1-\eta}\C Y\right)={\int_\eta \frac{\eta}{1-\eta}\,\pi(\eta)\int_{\mu}\frac{\pi(\mu\C\eta)\pi(\mu\C  Y)}{\pi(\mu)}\d\mu\d\eta}.
$$
 We will now work conditionally on $\mu$. For $\mu=(\mu_1,\dots,\mu_n)'$ and $\pi_1(\mu)=\prod_{i=1}^n\pi_1(\mu_i)=(\lambda/2)^n\e^{-\lambda|\mu|_1}$, we write
$$
\pi(\mu\C\eta)=\eta^n\, \pi_1(\mu)\prod_{i=1}^n\left[1+\frac{1-\eta}{\eta}\frac{\pi_0(\mu_i)}{\pi_1(\mu_i)}\right]\,\,\text{and}\,\,\pi(\bm \mu)=\int_\eta\pi(\mu\C\eta)\pi(\eta).
$$
For $S_n(\mu)=\{i: \mu_i\neq0\}$ with size $s_n(\mu)=|S_n(\mu)|$ we can write for the point-mass spike $\pi_0(\mu)=\delta_0(\mu)$
$$
\pi(\mu\C\eta)=\eta^{n} \pi_1(\mu)\prod_{i=1}^n\left[1+\frac{1-\eta}{\eta}\frac{\pi_0(\mu_i)}{\pi_1(\mu_i)}\right]=
\eta^n\pi_1(\mu)\left[1+\frac{1-\eta}{\eta}\frac{1}{\pi_1(0)}\right]^{n-s_n(\mu)}.
$$
Then
\begin{equation}
E\left(\frac{\eta}{1-\eta}\C Y\right)=E_{ \theta\C Y}E\left(\frac{\eta}{1-\eta}\C \theta\right)= \int_{\mu}\pi(\mu\C Y)\frac{\int_\eta \frac{\eta^{n+1}}{1-\eta}\pi(\eta)
\left[1+\frac{1-\eta}{\eta}\frac{2}{\lambda}\right]^{n-s_n(\mu)}}{\int_{\eta}\eta^n\pi(\eta)\left[1+\frac{1-\eta}{\eta}\frac{2}{\lambda}\right]^{n-s_n(\mu)}}.\label{eq:odds1}
\end{equation}
Next, we represent the posterior expectation above as a functional of a  ratio of two Gauss hypergeometric functions.
An Euler representation of the Gauss hypergeometric function writes as 
\begin{equation}
F_2(a',b',c';z)=\frac{\Gamma(c')}{\Gamma(b')\Gamma(c'-b')}\int_0^1\eta^{b'-1}(1-\eta)^{c'-b'-1}(1-\eta z)^{-a'}\d\eta \label{eq:gaus}.
\end{equation}
We now bound the posterior expectation of the prior odds $\eta/(1-\eta)$, given $\mu$. Using simple notation $s=s_n(\mu)$ and $z=\lambda/2-1$, we can write
\begin{align*}
E\left(\frac{\eta}{1-\eta}\C\mu\right)&=\frac{\int_\eta \eta^{a+s}(1-\eta)^{b-2}
\left[1+\eta\left(\frac{\lambda}{2}-1\right)\right]^{n-s}}{\int_{\eta} \eta^{a-1+s}(1-\eta)^{b-1}
\left[1+\eta\left(\frac{\lambda}{2}-1\right)\right]^{n-s}}\\
&=\frac{\Gamma(b-1)}{\Gamma(b)} \frac{\Gamma(a+s+1) }{\Gamma(a+s) }\frac{F_2(s-n,a+s+1,b+a+s;-z)}{F_2(s-n,a+s,b+a+s;-z)}.
\end{align*}
In Lemma \ref{lemma:gauss_ratio} we show that the ratio of two Gauss hypergeometric functions with a shifted second parameter is monotone increasing in $z$. This result is an extension of Theorem 1 in \cite{karp} who considered a different integer shift on the second and third argument. We use the notation $f_\delta(a',b',c';z)$  
in \eqref{eq:ratio_gauss} and use 
Lemma \ref{lemma:gauss_ratio} 
to find that (when $z=\lambda/2-1<0$)
\begin{align*}
E\left(\frac{\eta}{1-\eta}\C\mu\right)&\leq \frac{a+s+1}{b} f_1(s-n,a+s,b+a+s;0)\\
&=\frac{B(a+s+1,b-1)}{B(a+s,b-1)}\frac{B(a+s,b-1)}{B(a+s,b)}=\frac{a+s}{b-1}<\frac{a+s+1}{b-1}.
\end{align*}
In the regime when $0<\lambda/2-1$ with $\lambda\rightarrow\infty$, we have (by a repeated application of the l'Hospital rule)
$\lim_{z\rightarrow\infty}f_1(s-n,a+s,b+a+s;-z)\rightarrow 1$ and
\begin{equation}
\frac{a+s}{b-1}<E\left(\frac{\eta}{1-\eta}\C\mu\right)\leq \frac{a+s+1}{b}<\frac{a+s+1}{b-1}\label{eq:sandwich}
\end{equation}
Similarly, we have
\begin{align*}
E\left(\frac{1-\eta}{\eta}\C\mu\right)&=\frac{\int_\eta \eta^{a-2+s}(1-\eta)^{b}
\left[1+\eta\left(\frac{\lambda}{2}-1\right)\right]^{n-s}}{\int_{\eta} \eta^{a-1+s}(1-\eta)^{b-1}
\left[1+\eta\left(\frac{\lambda}{2}-1\right)\right]^{n-s}}\\
&=\frac{\Gamma(a+s-1)\Gamma(b+1)}{\Gamma(a+s)\Gamma(b)} \frac{F_2(s-n,a+s-1,b+a+s;-z)}{F_2(s-n,a+s,b+a+s;-z)}.
\end{align*}
Applying Lemma \ref{lemma:gauss_ratio} we find that the ratio above is monotone {\em decreasing} in $z=\lambda/2-1$ for $z>-1$.
Thereby
\begin{align*}
E\left(\frac{1-\eta}{\eta}\C\mu\right)&\leq \frac{\Gamma(a+s-1)\Gamma(b+1)}{\Gamma(a+s)\Gamma(b)} \frac{1}{f_1(s-n,a+s-1,b+a+s,-1)}\\
&=\frac{B(a+s-1,b+n-s+1)}{B(a+s-1,b+n-s)}\frac{B(a+s-1,b+n-s)}{B(a+s,b+n-s)}=\frac{ b+n-s }{ a+s-1 }.
\end{align*}
\qedhere

\subsection{Proof of Lemma \ref{lem:overshoot}}\label{sec:proof_lem:overshoot}

We start by noting that the prior $Beta(2,n+1)$ has a strict exponential decrease property (property (2.2) in \cite{castillo_vdv} $\pi(s_n(\mu)=s)\leq \pi(s_n(\mu)=s-1)C_{\pi}$ for some $0<C_{\pi}<1$ which can be shown as in their Example 2.2). We can thereby apply  Theorem 2.1 in \cite{castillo_vdv}. 
Denoting $B_n(M)=\{\mu\in\R^n:   \|\mu\|_0\leq M\,s_n\}$,   this theorem yields that for some suitable $M>0$, the posterior concentrates on sparse vectors in the sense that 
$P_{ Y\C\theta}\Pi(B_n^c\C  Y)=o(1)$ for any $\theta \in\Theta_n(s_n)$. Lemma \ref{lemma:posterior_odds} then yields  that for $a=2$ and $b=n+1$
$$
\sup_{\theta\in \Theta_n(s_n)}E_{ Y\C \theta}E\left(\frac{\eta}{1-\eta}\C Y\right)\leq \frac{3+M\,s_n}{n}+(3/n+1) P_{ Y\C\theta}\Pi(B_n^c\C  Y)=
Ms_n/n+o(1).\qedhere
$$

\subsection{Proof of Lemma \ref{lem:no_miss}}\label{sec:proof_lem:no_miss}
We rewrite the model $ Y\sim N(\theta , I)$ as
$$
Y_i=\theta_i +\epsilon_i \quad \text{for}\quad \epsilon_i\iid N(0,1) \quad (1\leq i\leq n)
$$
and define an event
$$
\mathcal A_n=\left\{\epsilon_i:\max\limits_{1\leq i\leq n}|\epsilon_i|\leq 2\sqrt{ \log n}\right\}.
$$
This event has a large probability in the sense that $P[\mathcal A_n^c]\leq 2/n$ (see Lemma 4 in \cite{castillo_vdv2}).
Then for $\theta\in \Theta_n(s_n,\wt M)$, take any $j$ such that $\theta_j\neq 0$. We will denote with $S$ a variable indexing all possible subsets of $\{1,\dots, n\}$. Denote with $\mathcal S_j=\{S\subseteq\{1,\dots, n\}: j\notin S\}$ the set of subsets of $\{1,\dots, n\}$ that do not include $j$.
Take $S\in \mathcal S_j$  and denote with $S^+=S\cup\{j\}$ an enlarged ``model"  obtained by including $j$. Then
$$
\frac{\Pi(S\C Y)}{\Pi(S^+\C Y)}=\frac{\Pi(S)}{\Pi(S^+)}\frac{\pi( Y\C S)}{\pi( Y\C S^+)}.
$$
The one-dimensional marginal likelihood under the Laplace prior satisfies
$$
m_1(x)=\phi(x)\lambda/2\left[I_1(x )+I_2(x )\right],
$$
where
$$
I_1(Y_i)=\frac{\Phi(Y_i-\lambda)}{\phi(Y_i-\lambda)}\quad\text{and}\quad  I_2(Y_i)=\frac{\Phi(-(Y_i+\lambda))}{\phi(Y_i+\lambda)}.
$$
This yields that, given $S$, the marginal likelihood is an independent product
$$
\pi( Y\C S) =  \prod_{i=1}^n\phi(Y_i)\prod_{i\in S}\frac{\lambda}{2}\left[I_1(Y_i)+I_2(Y_i)\right].
$$
 and 
$$
\frac{\pi( Y\C S)}{\pi( Y\C S^+)}=\frac{2}{\lambda}\frac{1}{I_1(Y_j)+I_2(Y_j)}.
$$
When $Y_j>0$ we have $I_1(Y_j)>I_2(Y_j)$ and $I_1(Y_j)\leq I_2(Y_j)$ when $Y_j\leq 0$. This yields
$$
\frac{\pi( Y\C S)}{\pi( Y\C S^+)}\leq \frac{2}{\lambda}\frac{\e^{-(|Y_j|-\lambda)^2/2}}{\Phi(-\lambda)}.
$$
Because  of Lemma \ref{lem:mills} and Lemma \ref{lemma:aux_function} we have for $\lambda>1$
$$
\frac{1}{\Phi(-\lambda)}<\frac{\left(\sqrt{1+4/\lambda^2}+1\right)\phi(\lambda)\lambda}{2}< 2 \phi(\lambda)\lambda.
$$
On the event $\mathcal A_n$ we obtain (using the inequality $(a+b)^2>a^2/2-b^2$) 
$$
(|Y_j|-\lambda)^2=(|\theta_j+\epsilon_j|-\lambda)^2>|\theta_j+\epsilon_j|^2/2-\lambda^2 >\theta_j^2/4-2\log n-\lambda^2>(\wt M^2/4-2)\log n-\lambda^2. 
$$
This yields for $c= \wt M^2/4-2$
$$
\frac{\pi( Y\C S)}{\pi( Y\C S^+)}\leq 4  \phi(\lambda)\e^{-c\log n +\lambda^2 }=\frac{4\sqrt{2\pi}}{n^{c}}\e^{\lambda^2/2}.
$$
Under the hierarchical prior $\eta\sim Beta(a,b)$, we have for $s=|S|$
$$
\Pi(S)=\frac{B(a+s,n-s+b)}{B(a,b)}=\frac{\Gamma(s+a)\Gamma(n-s+b)}{\Gamma(a+b+n)}\frac{\Gamma(a+b)}{\Gamma(a)\Gamma(b)}.
$$
The prior ratio for  $b=n+1$ satisfies
$$
\frac{\Pi(S )}{\Pi(S^+ )}=\frac{\Gamma(s+a)\Gamma(n-s+b)}{\Gamma(s+a+1)\Gamma(n-s-1+b)}=\frac{n-s+b}{s+a+1}\leq  2n. 
$$
Because the mapping $S\rightarrow S^+$ is one-to-one, we have for $C=8\sqrt{2\pi}$ 
\begin{align*}
\Pi(\mathcal S_j\C  Y)&=\sum_{S:j\notin S}\frac{\Pi(S\C  Y)}{\Pi(S^+\C Y)}\Pi(S^+\C Y)<\frac{C\e^{\lambda^2/2}}{n^{c-1}}\sum_{S:j\notin S}\Pi(S^+\C  Y)\\
&=\frac{C \e^{\lambda^2/2}}{n^{c-1}}\sum_{S^+:\exists S\,s.t.\, S^+=S\cup j}\Pi(S^+\C  Y)\leq \frac{C \e^{\lambda^2/2}}{n^{c-1}}.
\end{align*}
This means that the posterior probability of missing any signal  satisfies for $\lambda$ such that $\lambda^2\leq 2 d\log n$ for some $d>0$ and for $c>2+d$  
$$
\Pi\left(\exists j: |\theta_j|\neq 0\,\,\,\text{and}\,\,\,j\notin S\C  Y\right)\leq \sum_{j:|\theta_j|\neq 0}\Pi(\mathcal S_j\C Y)\leq \frac{s_n C \e^{\lambda^2/2}}{n^{c-1}}=o(1).
$$
This means that for any $\theta \in \Theta_n(s_n,\wt M)$, on the event $\mathcal A_n$, the posterior {\em does not undershoot}  $s_n=\|\theta \|_0$. In other words
$$
\sup\limits_{\theta \in \Theta_n(s_n,\wt M)}\Pi( s_n(\mu)< s_n\C  Y)\leq \sup\limits_{\theta \in \Theta_n(s_n,\wt M)}\Pi( s_n(\mu)< s_n\C  Y)\mathbb I (\mathcal A_n)+o(1)=o(1)
$$ 
and using Lemma \ref{lemma:posterior_odds} we have for $a=2$ and $b=n+1$
$$
 E_{ Y\C\theta}E \left[\log\left(\frac{1-\eta}{\eta }\right)\C Y\right]\leq P (\mathcal A_n)\log \left(\frac{2n}{s_n}\right) + \log(2n+1) P (\mathcal A_n^c)\lesssim \log(n/s_n).
$$

\section{Auxiliary Results for Sparse Normal Means}

\begin{lemma}\label{lemma:gauss_ratio}
For the Gauss hypergeometric function $F_2(a',b',c';z)$ defined in \eqref{eq:gaus},   the ratio   
\begin{equation}
f_\delta(a',b',c';z)=\frac{F_2(a',b'+\delta,c';-z)}{F_2(a',b',c';-z)}\label{eq:ratio_gauss}
\end{equation}
for $\delta>0$ is monotone increasing when $a'<0$  for any $z>-1$.
\end{lemma}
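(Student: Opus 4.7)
My plan is to use the Euler integral representation of $F_2$ directly and recognize the ratio $f_\delta$ as an expectation with respect to a parametric family of probability measures on $[0,1]$, then show monotonicity in $z$ via the Chebyshev (FKG) correlation inequality.

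Concretely, I would first rewrite the ratio. Using \eqref{eq:gaus} for both numerator and denominator, the $z$-dependent part of $f_\delta(a',b',c';z)$ equals
\[
\frac{\int_0^1 \eta^{b'+\delta-1}(1-\eta)^{c'-b'-\delta-1}(1+\eta z)^{-a'}\,\d\eta}{\int_0^1 \eta^{b'-1}(1-\eta)^{c'-b'-1}(1+\eta z)^{-a'}\,\d\eta} \;=\; E_{\mu_z}\!\left[g_\delta(\eta)\right],
\]
where $g_\delta(\eta)=\bigl(\eta/(1-\eta)\bigr)^\delta$ and $\mu_z$ is the probability measure on $[0,1]$ whose density is proportional to $\eta^{b'-1}(1-\eta)^{c'-b'-1}(1+\eta z)^{-a'}$. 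The Gamma-prefactor is independent of $z$, so monotonicity of $f_\delta$ in $z$ reduces to monotonicity of $z\mapsto E_{\mu_z}[g_\delta]$.

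Next, I would differentiate under the integral sign (standard dominated convergence arguments justify this for $z>-1$, where $1+\eta z>0$ uniformly in $\eta\in[0,1]$). Writing $w_z(\eta)=\eta^{b'-1}(1-\eta)^{c'-b'-1}(1+\eta z)^{-a'}$, the log-derivative is $\partial_z\log w_z(\eta)=-a'\eta/(1+\eta z)$, and a direct calculation yields the covariance identity
\[
\frac{\d}{\d z}E_{\mu_z}[g_\delta] \;=\; \mathrm{Cov}_{\mu_z}\!\left(g_\delta(\eta),\,h_z(\eta)\right), \qquad h_z(\eta):=\frac{-a'\,\eta}{1+\eta z}.
\]

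It remains to check that both $g_\delta$ and $h_z$ are nondecreasing in $\eta\in(0,1)$. For $\delta>0$, $g_\delta$ is increasing because $\eta/(1-\eta)$ is. For $h_z$, the hypothesis $a'<0$ gives $-a'>0$, and $\partial_\eta[\eta/(1+\eta z)]=(1+\eta z)^{-2}>0$ for $z>-1$. Hence both functions are monotone increasing in the same direction, and Chebyshev's (integral) correlation inequality gives $\mathrm{Cov}_{\mu_z}(g_\delta,h_z)\ge 0$, so $f_\delta$ is nondecreasing in $z$. The main thing to be careful about is the validity of differentiation under the integral and the strict positivity of $1+\eta z$ on the closed interval $[0,1]$, both of which hold cleanly on $z>-1$; the conceptual heart of the proof is simply the Chebyshev/FKG step, which replaces the more computational approach of \cite{karp}.
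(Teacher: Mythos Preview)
Your proof is correct and is essentially the same as the paper's: both use the Euler integral representation, differentiate the ratio in $z$, and reduce monotonicity to the Chebyshev correlation inequality for the two increasing functions $(\eta/(1-\eta))^\delta$ and $\eta/(1+\eta z)$ against the weight $\eta^{b'-1}(1-\eta)^{c'-b'-1}(1+\eta z)^{-a'}$. Your phrasing via the covariance identity $\frac{\d}{\d z}E_{\mu_z}[g_\delta]=\mathrm{Cov}_{\mu_z}(g_\delta,h_z)$ is a clean probabilistic restatement of the same computation the paper carries out in integral form.
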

\proof 
We will prove this analogously as in Theorem 1 of \cite{karp}.
We denote with $A_\delta=\frac{\Gamma(c')}{\Gamma(b'+\delta)\Gamma(c'-b'-\delta)}$ and write
$$
f_\delta(a',b',c';z)=\frac{A_\delta \int  [\eta/(1-\eta)]^\delta q(\theta,z) \d\theta}{A_0 \int  q(\theta,z) \d\theta}
$$
where $q(\eta,z)=\eta^{b'-1}(1-\eta)^{c'-b'-1}(1+\eta z)^{-a'}$. By differentiating the ratio $f_\delta(z)$ with respect to $z$,
we find that the function $f_\delta(z)$ is monotone increasing for $a'<0$ if
$$
\int_0^1 h(\eta)\times q(\eta,z)\d\eta \int_0^1 f(\eta)\times q(\eta,z)\d\eta<\int_0^1 h(\eta)\times f(\eta)\times q(\eta,z)\d\eta\int_0^1 q(\eta,z)\d\eta
$$
where $h(\eta)=(\frac{\eta}{1-\eta})^\delta$ and $f(\eta)=\frac{\eta}{1+\eta z}$.
The function $q(\eta,z)$  is positive, while the functions $h(\eta)$ and $f(\eta)$ are monotone increasing for fixed $z>-1$ and $0 < \eta < 1$. 
Hence, the above inequality is an instance of the Chebyshev inequality  (\cite{mitrinov} Chapter IX, formula (1.1)). \qedhere

\begin{lemma}\label{lemma:aux_function}
For any $x>0$, we have for $0<c $
$$
\log\frac{\sqrt{1+c/x^2}+1}{2}<\frac{\sqrt{1+c/x^2}-1}{2}<c/x^2.
$$
\end{lemma}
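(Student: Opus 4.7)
The plan is to substitute $y = \sqrt{1+c/x^2}$, so that $y>1$ since $c,x>0$, and then prove the two inequalities $\log\frac{y+1}{2} < \frac{y-1}{2}$ and $\frac{y-1}{2} < c/x^2$ separately. Both are elementary calculus facts once rewritten this way, so no real obstacle is expected; I expect the entire argument to fit in a few lines.

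For the left inequality, I would set $t = (y+1)/2$. Since $y>1$ we have $t>1$, and the standard inequality $\log t < t-1$ (valid for all $t>1$, by concavity of $\log$ or equivalently $\log(1+u)<u$ for $u>0$ applied with $u = t-1$) gives
\begin{equation*}
\log\frac{y+1}{2} \;=\; \log t \;<\; t-1 \;=\; \frac{y-1}{2},
\end{equation*}
which is exactly the first inequality.

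For the right inequality, I would use $\sqrt{1+u}\le 1+u/2$ for $u\ge 0$. This is immediate since $(1+u/2)^2 = 1+u+u^2/4 \ge 1+u$, and taking square roots (both sides positive) yields the claim. Applying it with $u=c/x^2$ gives $y-1 \le c/(2x^2)$, hence
\begin{equation*}
\frac{y-1}{2} \;\le\; \frac{c}{4x^2} \;<\; \frac{c}{x^2},
\end{equation*}
completing the chain of inequalities. Combining the two bounds yields the stated result.
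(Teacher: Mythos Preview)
Your proof is correct and essentially identical to the paper's: the paper's one-line proof invokes $\log(1+x)<x$ and $\sqrt{1+x}-1<x$ for $x>0$, which is exactly your substitution $t=(y+1)/2$ with $\log t<t-1$ for the first inequality, and your bound $\sqrt{1+u}\le 1+u/2$ is a slightly sharper version of $\sqrt{1+u}-1<u$ that yields the same conclusion for the second.
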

\proof This follows from the fact that $\log(1+x)<x$ and $\sqrt{1+x}-1<x$ for $x>0$.
\begin{lemma}(Mills Ratio Bounds)\label{lem:mills}
For the Gaussian Mills ratio $R(x)=(1-\Phi(x))/\phi(x)$ we have for any $x>0$
\begin{equation}
\frac{2}{\sqrt{x^2+4}+x}< \frac{1-\Phi(x)}{\phi(x)}<\frac{2}{\sqrt{x^2+2}+x}.
\end{equation}
\end{lemma}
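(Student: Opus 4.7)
The plan is to deduce both inequalities from a Grönwall-type comparison argument exploiting the first-order ODE $R'(x)=xR(x)-1$, with boundary condition $R(x)\to 0$ as $x\to\infty$. First I would rationalize the two bounds: since $(\sqrt{x^2+2}-x)(\sqrt{x^2+2}+x)=2$ and $(\sqrt{x^2+4}-x)(\sqrt{x^2+4}+x)=4$, the target inequalities are equivalent to $\ell(x)<R(x)<u(x)$ for
$$u(x)=\sqrt{x^2+2}-x,\qquad \ell(x)=\tfrac12\bigl(\sqrt{x^2+4}-x\bigr).$$

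Next I would establish the following comparison principle: if a smooth function $v$ on $[0,\infty)$ satisfies $v'(x)<xv(x)-1$ (resp.\ $>$) and $v(x)\to 0$ as $x\to\infty$, then $v(x)<R(x)$ (resp.\ $>$) for all $x\geq 0$. The proof is a one-liner: setting $w=R-v$, one obtains $w'(x)>xw(x)$ (resp.\ $<$), so $(e^{-x^2/2}w(x))'$ has a definite sign; since $v(x)\to 0$, $R(x)\to 0$, and $e^{-x^2/2}\to 0$, the quantity $e^{-x^2/2}w(x)$ tends to $0$, and monotonicity forces $w$ to have a constant sign on $[0,\infty)$.

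The heart of the proof is then verifying the ODE inequalities for $u$ and $\ell$. For the upper bound, direct differentiation gives
$$u'(x)-xu(x)+1=\frac{x\bigl[x\sqrt{x^2+2}-(x^2+1)\bigr]}{\sqrt{x^2+2}},$$
which is non-positive on $[0,\infty)$ because $(x\sqrt{x^2+2})^2=x^4+2x^2<(x^2+1)^2$. For the lower bound, an analogous computation yields
$$2\bigl[\ell'(x)-x\ell(x)+1\bigr]=\frac{(x^2+1)\sqrt{x^2+4}-x(x^2+3)}{\sqrt{x^2+4}},$$
which is strictly positive because the algebraic identity $(x^2+1)^2(x^2+4)-x^2(x^2+3)^2=4$ shows $(x^2+1)\sqrt{x^2+4}>x(x^2+3)$.

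Combining the comparison principle with the two algebraic verifications (and noting $u(x),\ell(x)\sim 1/x$ as $x\to\infty$, which guarantees the decay needed at infinity) yields $\ell(x)<R(x)<u(x)$, which is exactly the desired statement. I expect the only potentially fiddly step to be the algebraic manipulation showing the sign of $\ell'(x)-x\ell(x)+1$; the cancellation $(x^2+1)^2(x^2+4)-x^2(x^2+3)^2=x^6+6x^4+9x^2+4-(x^6+6x^4+9x^2)=4$ is what makes the argument close cleanly.
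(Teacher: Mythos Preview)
The paper does not actually prove this lemma; it simply cites Birnbaum (1942). Your proposal therefore supplies more than the paper does: a self-contained argument via the ODE $R'=xR-1$ and a comparison principle at infinity. The algebraic verifications for $u$ and $\ell$ are both correct, including the neat identity $(x^2+1)^2(x^2+4)-x^2(x^2+3)^2=4$.

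There is, however, a sign slip in your statement of the comparison principle. With $w=R-v$ and $v'<xv-1$ you correctly obtain $w'>xw$, hence $(e^{-x^2/2}w)'>0$; but an increasing function that tends to $0$ at infinity must be \emph{negative} for finite $x$, so $w<0$, i.e.\ $R<v$, not $v<R$ as you wrote. In other words, strict subsolutions of the ODE ($v'<xv-1$) furnish \emph{upper} bounds for $R$, and strict supersolutions ($v'>xv-1$) furnish \emph{lower} bounds --- the reverse of your stated principle. Once this direction is flipped, your applications line up correctly: $u'\le xu-1$ on $[0,\infty)$ (strict for $x>0$) gives $R<u$, and $\ell'>x\ell-1$ gives $\ell<R$. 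So the argument is sound after this one correction.
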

\proof This result shown is in \cite{birnbaum}.

\section{Auxiliary Results for Sparse Regression}
We will be using the following notation $\|X\|=\max_{1\leq j\leq p}\|X^j\|_2,$ where $X^j$ is the $j^{th}$ column of the matrix $X$. For a vector $\beta\in\R^p$ and a set of indices $S\subseteq\{1,\dots, p\}$
we denote with $\beta_S=\{\beta_i:i\in S\}$ the active subset. Similarly, with $S_\beta=\{i:\beta_i\neq 0\}$ we denote the support of the vector $\beta\in\R^p$.
Lastly, with $X_S$ we denote the sub-matrix $[X^j: j\in S]$ of columns that belong to $S$.

\begin{definition} (Definition 2.1 in \cite{castillo_vdv2})\label{def:compatible}
The compatibility number of a model $S\subset\{1,\dots, p\}$ is given by
$$
\phi(S)=\inf\left\{ \frac{\|X\beta\|_2|S|^{1/2}}{\|X\|\|\beta_S\|_1}: \|\beta_{S^c}\|_1\leq 7\|\beta_S\|_1,\beta_S\neq 0\right\}.
$$
\end{definition}

\begin{definition}(Definition 2.3 in \cite{castillo_vdv2})\label{def:smallest_scaled}
The smallest scaled singular value of dimension $s$ is defined as 
$$
\wt\phi(s)=\inf\left\{ \frac{\|X\beta\|_2}{\|X\|\|\beta\|_2}:0\neq |S_\beta|\leq s\right\}.
$$
\end{definition}

\begin{theorem}(Consistent Model Selection)\label{thm:consist}
Assume the prior  \eqref{eq:prior_dim} with $A_4>1$ and $a<A_4-1$ where $s_0\leq p^a$. For any $c_0>0$ and $s_n\lambda\sqrt{\log p}/\|X\|\rightarrow 0$  there exists a set $\mathcal D_n\subset\R^n$ and $a_1>0$ such that for any  $Y\in \mathcal D_n$  
$$
\sup_{\beta_0\in \wt\Theta(s_n,M):\phi(S_0)\geq c_0, \wt \psi(S_0)\geq c_0} \P(\beta:S_\beta\neq S_{0}\C Y)\lesssim \frac{1}{p^{a_1}}
$$
where $P(\mathcal D_n^c)\lesssim p^{-c_1}$ for some $c_1>0$.
\end{theorem}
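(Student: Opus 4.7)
The plan is to decompose the event $\{S_\beta\neq S_0\}$ into two pieces, an \emph{overshoot} piece $\mathcal{O}=\{S_\beta\not\subset S_0 \text{ and } S_\beta\neq S_0\}\cup\{S_\beta\supsetneq S_0\}$ and an \emph{undershoot} piece $\mathcal{U}=\{S_\beta\not\supset S_0\}$, and to control each posterior mass separately by a Bayes-factor argument against the anchor model $S_0$. The good set $\mathcal D_n$ will be built as the intersection of three high-probability events: (i) $\|X'\varepsilon\|_\infty\leq \bar\lambda=2\|X\|\sqrt{\log p}$ (standard Gaussian maxima bound giving $p^{-c_1}$ tail); (ii) the $\ell_1$/dimension contraction event from Theorem~2 in \cite{castillo_vdv2}, which on $\mathcal D_n$ puts posterior mass at least $1-p^{-c_1}$ on $\{|S_\beta|\le K s_0\}$ for some $K=K(A_3,A_4,\phi(S_0))$; (iii) an event on which the quadratic noise cross-term $\varepsilon' X_S u$ is dominated uniformly over candidate directions (another $p^{-c_1}$-type bound). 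Since $A_4>1$ and $a<A_4-1$, the dimension prior supplies the factor $p^{-A_4}$ per added coordinate, which is what powers the overshoot bound.

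\textbf{Overshoot.} First restrict to models with $|S|\le K s_0$ using event (ii). For any such $S$ with $S\supsetneq S_0$, write the Bayes factor $\pi(S\mid Y)/\pi(S_0\mid Y)=[\pi(S)/\pi(S_0)]\cdot m_S(Y)/m_{S_0}(Y)$, where $m_S$ is the marginal likelihood under the Laplace slab restricted to support $S$. The prior ratio is bounded by $(A_2/A_1)^{|S\setminus S_0|}p^{-A_4|S\setminus S_0|}$ by \eqref{eq:prior_dim}. The marginal ratio is handled by the novel Laplace marginal likelihood upper bound alluded to before Theorem~\ref{thm:final} in the paper: up to a factor of polynomial order $(\log p)^{d}$ it is at most $\e^{C|S\setminus S_0|\log p}$ with $C<A_4$ on event (i), using $\lambda\le 2\bar\lambda$. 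Summing over the $\binom{p}{s-s_0}\le p^{s-s_0}$ possible extensions of $S_0$ of size $s\in\{s_0+1,\dots,Ks_0\}$ and over all $S$ that overlap $S_0$ partially (handled identically after an auxiliary step that reduces to the fully-supported case by comparing with $S\cup S_0$), the geometric series in $p^{-(A_4-1-a)}$ collapses and yields a bound of order $p^{-(A_4-1-a)}\lesssim p^{-a_1}$ for $a_1:=A_4-1-a>0$.

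\textbf{Undershoot.} Fix $j\in S_0$ and consider all $S$ with $j\notin S$. Map $S\mapsto S^+=S\cup\{j\}$; this is injective. The Bayes factor against $S^+$ is
\[
\frac{\pi(S\mid Y)}{\pi(S^+\mid Y)}=\frac{\pi(S)}{\pi(S^+)}\cdot\frac{m_S(Y)}{m_{S^+}(Y)}\le A_2^{-1}p^{A_3}\cdot\frac{m_S(Y)}{m_{S^+}(Y)}.
\]
The crucial step is showing that on $\mathcal D_n$
\[
\frac{m_{S^+}(Y)}{m_S(Y)}\gtrsim \exp\!\bigl(c\,\wt\psi(S_0)^2\,\beta_{0j}^2\,\|X\|^2\bigr),
\]
which, under the beta-min lower bound $|\beta_{0j}|\ge M\sqrt{s_0\log p}/[\wt\psi(S_0)^2\|X\|\phi(S_0)]$ and $\phi(S_0)\wedge\wt\psi(S_0)\ge c_0$, gives $m_S/m_{S^+}\le p^{-M^2 c/(c_0^6)+o(1)}$. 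Choosing $M$ large enough relative to $A_3$ makes this much smaller than $p^{-A_3-1-a}$. The displayed lower bound on $m_{S^+}/m_S$ is obtained by: (a) profiling the Laplace marginal over coordinate $j$ while keeping $\beta_{S}$ fixed at its posterior mode, reducing to a one-dimensional Laplace–Gaussian convolution; (b) using $X'\varepsilon$-control on event (i) together with $\wt\psi(S_0)$ to argue that the residual $Y-X_S\hat\beta_S$ has substantial correlation with $X^j$ of magnitude $\gtrsim \wt\psi(S_0)\|X\||\beta_{0j}|$; (c) evaluating the one-dimensional Laplace marginal at this large argument, which produces the Gaussian exponent $\frac12\wt\psi^2\beta_{0j}^2\|X\|^2$ minus a $\lambda|\beta_{0j}|\|X\|$ penalty that is negligible since $\lambda\le\bar\lambda$ and $|\beta_{0j}|\gtrsim \sqrt{\log p}/\|X\|$. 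Summing $\pi(S\mid Y)\le \pi(S^+\mid Y)\cdot p^{A_3}A_2^{-1}\cdot p^{-\text{(large)}}$ over $S$ with $j\notin S$, then over $j\in S_0$ (at most $s_0\le p^a$ terms), yields an undershoot bound of order $p^{-a_1}$ for suitable $a_1>0$.

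The main obstacle is step (b)–(c) of the undershoot argument: one must transport the design quantity $\wt\psi(S_0)$ through the marginalization over $\beta_S$, because the Laplace slab is not conjugate and the integral does not collapse to a closed form as in the Gaussian-slab case. The novel marginal-likelihood bound mentioned in the paper is tailor-made for exactly this transfer; once it is in place the $\ell_2$ signal strength in $\beta_{0j}$ converts cleanly into a power of $p$ in the Bayes factor. Combining the overshoot and undershoot estimates gives $\Pi(S_\beta\neq S_0\mid Y)\lesssim p^{-a_1}$ on $\mathcal D_n$, uniformly over $\wt\Theta(s_n,M)\cap\{\phi(S_0)\wedge\wt\psi(S_0)\ge c_0\}$, which is the claim.
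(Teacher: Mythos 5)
The paper's own proof of Theorem~\ref{thm:consist} is a one-line citation: it observes that the statement is a restatement of Corollary~1 in \cite{castillo_vdv2} and invokes that proof verbatim. Your proposal instead tries to reprove the model-selection result from scratch. The overshoot/undershoot Bayes-factor decomposition you outline is the same skeleton that underlies the Castillo--Schmidt-Hieber--van der Vaart argument (their Theorems~4 and~5), so in spirit you are reconstructing the cited proof rather than replacing it, and the good set $\mathcal D_n$ you build from the gradient-noise event, the dimension-contraction event, and a cross-term event is the same kind of object they construct.

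That said, there are two genuine problems with your sketch as a proof of this statement. First, you lean on ``the novel Laplace marginal-likelihood upper bound alluded to before Theorem~\ref{thm:final}'' to close both the overshoot step and the hardest part of the undershoot step; but in the paper that bound (Lemma~\ref{lemma:ub}) is used only for Theorem~\ref{thm:final}, and it is an upper bound on $\E_{Y}\log\Lambda_{n,\beta_0,\pi}(Y,X)$ under the oracle prior supported on $S_0$, not a bound on ratios $m_{S^+}(Y)/m_S(Y)$ across different supports. Invoking it for the Bayes-factor comparisons you need is a mismatch, and it would also make the logic circular since Theorem~\ref{thm:final} itself relies on Theorem~\ref{thm:consist}. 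Second, you explicitly flag steps (b)--(c) of the undershoot argument as ``the main obstacle'' and leave them unresolved. That is precisely the content that Castillo et al.\ supply (their Theorem~4 and its proof, using the compatibility number and the smallest scaled singular value to control the marginal-likelihood gain from adding a true coordinate), so the gap you acknowledge is exactly the part that the paper sidesteps by citing. A complete version of your argument would essentially reconstitute their proofs of Theorems~4 and~5; without that, the sketch is a plausible outline but not yet a proof, whereas the paper's route is shorter because it simply recognizes the statement as a known result and refers to its source.
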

\proof This follows from the proof of Corollary 1 in \cite{castillo_vdv2}.

\begin{lemma}\label{lemma:lb}
Assume that $X_0$ has been normalized such that $\|X_0\|=n$. Given  the Laplace prior $\pi\equiv\pi_{\lambda,S_0}(\beta)=(\lambda/2)^{s}\e^{-\lambda\|\beta\|_1}$ supported on $S_0$ with $s=|S_0|$, we have 
$$
\log \Lambda_{n,\beta_0,\pi}(Y,X)\geq -\lambda\|\beta_0\|_1-1/2-\lambda/n+s\log(\lambda/n)-\log s!
$$
almost surely.
\end{lemma}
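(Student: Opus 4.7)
Since both the Laplace prior $\pi_{\lambda,S_0}$ and $\beta_0$ are supported on $S_0$, every $\beta$ in the support satisfies $\beta-\beta_0=\iota_{S_0}(b)$ for some $b\in\mathbb R^s$, where $\iota_{S_0}$ embeds an $s$-vector into $\mathbb R^p$ with zero entries outside $S_0$. Changing variables and writing $\varepsilon:=Y-X\beta_0$ so that $(Y-X\beta_0)'X(\beta-\beta_0)=\varepsilon'X_0 b$ and $X(\beta-\beta_0)=X_0 b$, I obtain
\begin{equation*}
\Lambda_{n,\beta_0,\pi}(Y,X)=(\lambda/2)^{s}\int_{\mathbb R^{s}}\exp\!\left\{-\tfrac12\|X_0 b\|_2^{2}+\varepsilon'X_0 b-\lambda\|\beta_{0,S_0}+b\|_1\right\}db.
\end{equation*}

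The plan is to produce a deterministic lower bound by restricting the integration domain to the symmetric $\ell_1$-ball $R=\{b\in\mathbb R^{s}:\|b\|_1\le 1/n\}$, whose Lebesgue volume equals $|R|=(2/n)^{s}/s!$. On $R$ three deterministic bounds apply. First, by the triangle inequality, $\|\beta_{0,S_0}+b\|_1\le\|\beta_0\|_1+\|b\|_1\le\|\beta_0\|_1+1/n$. Second, using $X_0 b=\sum_{j\in S_0} b_j X^{j}$ together with $\|X^{j}\|_2\le\|X\|=\sqrt n$, the triangle inequality in $\ell_2$ gives $\|X_0 b\|_2\le\sqrt n\,\|b\|_1\le 1/\sqrt n$, hence $\tfrac12\|X_0 b\|_2^{2}\le 1/(2n)\le 1/2$. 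Thus on $R$ the integrand is bounded below by $e^{-\lambda\|\beta_0\|_1-\lambda/n-1/2}\,e^{\varepsilon'X_0 b}$.

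The only remaining obstacle is that $\varepsilon'X_0 b$ depends on the data $Y$, while the target bound is almost sure (deterministic). I eliminate this dependence by Jensen's inequality exploiting the central symmetry of $R$: since $R$ is invariant under $b\mapsto -b$, the uniform average of $b$ over $R$ is $0$, so
\begin{equation*}
\int_{R}e^{\varepsilon'X_0 b}\,db\;\ge\;|R|\,\exp\!\left\{\tfrac{1}{|R|}\int_{R}\varepsilon'X_0 b\,db\right\}=|R|=\frac{(2/n)^{s}}{s!}.
\end{equation*}

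Combining these estimates gives
\begin{equation*}
\Lambda_{n,\beta_0,\pi}(Y,X)\;\ge\;(\lambda/2)^{s}\,\frac{(2/n)^{s}}{s!}\,e^{-\lambda\|\beta_0\|_1-\lambda/n-1/2}\;=\;\frac{(\lambda/n)^{s}}{s!}\,e^{-\lambda\|\beta_0\|_1-\lambda/n-1/2},
\end{equation*}
so that $\log\Lambda_{n,\beta_0,\pi}(Y,X)\ge s\log(\lambda/n)-\log s!-\lambda\|\beta_0\|_1-\lambda/n-1/2$, as claimed. The only nontrivial step is the Jensen-plus-symmetry argument that converts the random linear term into a harmless zero; every other inequality is a one-line deterministic bound, so I do not anticipate further technical difficulty.
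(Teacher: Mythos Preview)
Your argument is correct and is exactly the approach of Lemma~2 in \cite{castillo_vdv2} that the paper invokes: restrict the integral to the $\ell_1$-ball $\{\|b\|_1\le 1/n\}$, bound the quadratic and $\ell_1$ terms deterministically on that ball, and kill the random linear term via Jensen and central symmetry. One cosmetic point: the lemma as stated assumes $\|X_0\|=n$ (likely a typo for $\sqrt n$, cf.\ the paper's standing normalization), whereas you use $\|X^j\|_2\le\sqrt n$; either reading gives $\tfrac12\|X_0b\|_2^2\le 1/2$ on the ball, so the bound is unaffected.
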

 \proof Analogous to Lemma 2 in \cite{castillo_vdv2}.

\begin{lemma}\label{lemma:ub}
 Assume the Laplace prior $\pi\equiv\pi_{\lambda,S_0}(\beta)=(\lambda/2)^{s}\e^{-\lambda\|\beta\|_1}$ supported on $S_0$ of size $s=|S_0|$. Under the Assumption \ref{ass:design} we have for $\beta_0\in\wt\Theta(s_n,M)$
 $$
E_{Y\sim N(X\beta_0, I)}\log \Lambda_{n,\beta_0,\pi}(Y,X)\leq  \frac{s}{2}\left[1+2\lambda^2\, n(\log p)^d + C_0\sqrt{s/n \log p}+\log\left(\frac{8\pi}{n}\right)\right]-\lambda\|\beta_0\|_1. 
$$
where $C_0$ depends on $\wt\psi(S_0), \phi(S_0)$ and $b$. 
\end{lemma}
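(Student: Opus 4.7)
The strategy is a change of variables to $\mathbb R^s$, followed by completion of the square and Young's inequality on the $\ell_1$ penalty. Write $A=X_0'X_0$ and $w=X_0'(Y-X\beta_0)$, so that under $Y\sim N(X\beta_0,I)$ we have $w\sim N(0,A)$. Since $\pi_{\lambda,S_0}$ is supported on $S_0$, substituting $b=\beta-\beta_0$ gives
\[
\Lambda_{n,\beta_0,\pi}(Y,X)=(\lambda/2)^s\int_{\mathbb R^s}\exp\!\Big\{-\tfrac12 b'Ab+w'b-\lambda\|\beta_0+b\|_1\Big\}\,db.
\]
The reverse triangle inequality $\|\beta_0+b\|_1\geq \|\beta_0\|_1-\|b\|_1$ peels off the factor $\e^{-\lambda\|\beta_0\|_1}$, which is the source of the $-\lambda\|\beta_0\|_1$ in the stated bound. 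Completing the square with $\mu=A^{-1}w$ then produces $\e^{\frac12 w'A^{-1}w}$ times a Gaussian-Laplace integral centered at $\mu$.

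Next, I would bound this remaining integral by splitting $\|b\|_1\leq \sqrt s\|b-\mu\|_2+\|\mu\|_1$ and applying Young's inequality $\lambda\sqrt s\|u\|_2\leq \lambda^2 s/\lambda_{\min}(A)+\lambda_{\min}(A)\|u\|_2^2/4$, so that the quadratic form only shrinks by a factor of $1/2$ and yields a standard Gaussian integral evaluating to $(4\pi)^{s/2}/|A|^{1/2}$. Combined,
\[
\Lambda \leq \frac{(\lambda/2)^s(4\pi)^{s/2}}{|A|^{1/2}}\exp\!\Big\{-\lambda\|\beta_0\|_1+\tfrac12 w'A^{-1}w+\lambda\|\mu\|_1+\lambda^2 s/\lambda_{\min}(A)\Big\}.
\]
Taking $\log$ and then $E_Y$, three terms emerge cleanly: (a) $E[\tfrac12 w'A^{-1}w]=\tfrac12\mathrm{tr}(A^{-1}A)=s/2$, which yields the constant ``$1$'' inside the brackets of the claim; (b) $E\|\mu\|_1\leq \sqrt s\sqrt{\mathrm{tr}(A^{-1})}\leq s/\sqrt{\lambda_{\min}(A)}$ via Jensen's inequality and $\mu\sim N(0,A^{-1})$, which will contribute to the $C_0\sqrt{s\log p/n}$ term; (c) $|A|\geq \lambda_{\min}(A)^s$ gives $-\tfrac12\log|A|\leq -\tfrac{s}{2}\log\lambda_{\min}(A)$, to be matched to $-\tfrac{s}{2}\log n$.

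The main obstacle is translating the resulting $\lambda_{\min}(A)$- and $\lambda_{\max}(A)$-dependent exponents into the advertised form $\tfrac{s}{2}[2\lambda^2 n(\log p)^d+C_0\sqrt{s\log p/n}+\log(8\pi/n)]$. This uses Assumption \ref{ass:design} together with the beta-min threshold \eqref{eq:betamin}: the lower bound $\beta_{\min}^2\gtrsim s\log p/(n\phi(S_0)^2\wt\psi(S_0)^4)$ combined with $\chi^2_{s_0,1/2}\asymp s_0$ yields $\lambda_{\min}(A)\gtrsim n/(\log p)^d$ (up to constants in $\phi,\wt\psi,b$), converting $-\tfrac{s}{2}\log\lambda_{\min}(A)$ into $-\tfrac{s}{2}\log n$ up to a polylog defect absorbable into $C_0$, and turning $\lambda s/\sqrt{\lambda_{\min}(A)}$ into the $C_0\sqrt{s\log p/n}$ term. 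The $\lambda^2 n(\log p)^d$ factor in the bracket comes from tracking the Young inequality through the worst-case eigenvalue inversion in conjunction with the upper bound $\lambda_{\max}(A)\lesssim n(\log p)^d$. The constant $C_0$ inherits its dependence on $\phi(S_0),\wt\psi(S_0),b$ through these steps.
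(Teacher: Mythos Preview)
Your approach is correct and genuinely different from the paper's. The paper does \emph{not} use the triangle inequality to peel off $-\lambda\|\beta_0\|_1$; instead it keeps the original parametrization, decomposes $\mathbb R^s$ into its $2^s$ sign-orthants $\mathcal O_\kappa$, writes $\mathcal I=\sum_\kappa J_\kappa$ with $J_\kappa$ a Gaussian orthant integral centered at the shrinkage estimator $\mu^\kappa=(X_0'X_0)^{-1}(X_0'Y-\lambda\bm 1_\kappa)$, and bounds $\log\mathcal I\leq s\log 2+\log J_{\kappa^*}$. The crux of the paper's argument is then a geometric one: under Assumption~\ref{ass:design} and the beta-min condition, the half-mass ellipsoid of the MLE $\mu\sim N(\beta_0,(X_0'X_0)^{-1})$ lies entirely inside the orthant containing $\beta_0$ with high probability, which forces $\bm 1_{\kappa^*}=\mathrm{sign}(\mu)$ and hence $-\lambda\bm 1_{\kappa^*}'\mu\approx -\lambda\|\mu\|_1\approx -\lambda\|\beta_0\|_1$. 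Your route via $\|\beta_0+b\|_1\geq \|\beta_0\|_1-\|b\|_1$ plus Young's inequality is considerably more elementary and in fact yields a \emph{sharper} quadratic term $\lambda^2 s/\lambda_{\min}(A)$ rather than the paper's $\lambda^2 s\,\lambda_{\max}(A)\lesssim \lambda^2 s\,n(\log p)^d$; the stated bound then follows a fortiori. Two minor remarks: your sentence attributing the $\lambda^2 n(\log p)^d$ factor to the $\lambda_{\max}$ upper bound is not what your own computation produces---you actually get the smaller $\lambda^2 s\log p/n$ via $1/\lambda_{\min}(A)$, and the stated form is just a weakening. Also, the determinant step $-\tfrac12\log|A|\leq -\tfrac{s}{2}\log n$ picks up an unavoidable $+\tfrac{s}{2}\log\log p$ correction from $\lambda_{\min}(A)\gtrsim n/\log p$, which is not literally of the $C_0\sqrt{s\log p/n}$ shape; but the paper's own proof faces the identical issue, and this polylog defect is harmless for the downstream Theorem~\ref{thm:final}.
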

\proof
For a vector $\beta\in\R^{s }$, we denote  with $sign(\beta)$ an  $(s \times 1)$ vector of $sign(\beta_i)$. Next,  for $\pi\equiv\pi_{\lambda,S_0}(\beta)$ and
$
\Lambda_{n,\beta_0,\pi}(Y,X)\equiv  \int_{\beta}\Delta_{n,\beta,\beta_0}(Y,X)\pi_{S_0,\lambda}(\beta)\d\beta
$
with $\Delta_{n,\beta,\beta_0}(Y,X)$ defined in \eqref{eq:delta}
we can write
\begin{equation}
\Lambda_{n,\beta_0,\pi}(Y,X)=\e^{-1/2\|X\beta_0\|_2^2-\beta_0'X'(Y-X\beta_0)}(\lambda/2)^s\times \mathcal I,\label{eq:Lambda}
\end{equation}
where, using a shorthand notation $X_0=X_{S_0}$ and $\beta=\beta_{S_0}$,
\begin{equation}
 \mathcal I=\int\e^{-1/2\|X_0\beta\|_2^2+\beta'[X_0'Y-\lambda\,sign(\beta)]}\d\beta.\label{eq:I}
\end{equation}
For $\beta\in \R^s$, there are $2^s$ patterns $\{\pm 1\}^s$ of $sign(\beta)$ and we denote this set with $\Xi$. We write $\R^s=\cup_{\kappa\in \Xi} \mathcal O_{\kappa}$ where $\mathcal O_{\kappa}$ corresponds to an orthant that corresponds to the sign pattern indexed by $\kappa$.  For each $\kappa\in \Xi$ we define a shrinkage estimator
$$
\mu^\kappa=(X_0'X_0)^{-1}(X_0'Y- \lambda\bm 1_\kappa)
$$
where $\bm 1_\kappa$ corresponds to the $\pm1$ pattern    based on the orthant $\kappa$.
We denote with $\phi(x;\mu,\Sigma)$ the density of an $s$-variate normal distribution with mean $\mu$ and variance matrix $\Sigma$.
Then we decompose the integral  \eqref{eq:I} into
 \begin{equation}
 \mathcal I=\sum_{\kappa\in\, \Xi} J_\kappa,\quad\text{where}\quad J_{\kappa}=\frac{\int_{\mathcal O_\kappa} \phi(\beta;\mu^\kappa, (X_0'X_0)^{-1})\d\beta}{\phi(\mu^\kappa; 0, (X_0'X_0)^{-1})}\label{eq:J}.
 \end{equation}
Next, we denote with $A_{\kappa^*}=\{Y:  J_{\kappa^*}=\max_{\kappa}J_{\kappa}\}$ and write
\begin{align*}
E_{Y\sim N(X\beta_0, I)}\log \mathcal I  &\leq E_{Y\sim N(X\beta_0, I)} \sum_{\kappa^*\in\, \Xi}\mathbb I(Y\in A_{\kappa^*})\log (2^s J_{\kappa^*}).
\end{align*}
 Denote with $\mu=(X_0'X_0)^{-1}X_0'Y$ the MLE estimator for the true model $S_0$, then we have
 \begin{align*}
 J_\kappa\leq \frac{1}{\phi(\mu^{\kappa}; 0, (X_0'X_0)^{-1})} &=\frac{\e^{\mu'(X_0'X_0)\mu/2- \lambda\bm 1_{\kappa }'\mu+\lambda^2\bm 1_{\kappa }'(X_0'X_0) \bm 1_{\kappa }/2}}{(2\pi)^{-s/2}|X_0'X_0|^{1/2}}.
 \end{align*}
Then since 
 $
E \mu'(X_0'X_0)\mu=  tr(X_0(X_0'X_0)^{-1}X_0' E YY')
=\mathrm{rank}(X_0)/2+\|X \beta_0\|_2^2/2
$
we have
\begin{align}
E_{Y\sim N(X\beta_0, I)}\log I  \leq&  s/2\log(8\pi)-s/2\log n+s/2+\|X\beta_0\|_2^2/2+   \lambda^2\times ns(\log p)^d\nonumber \\
&- E_{Y\sim N(X\beta_0, I)} \sum_{\kappa^*\in\, \Xi}\mathbb I(Y\in A_{\kappa^*}) \lambda\bm 1_{\kappa^* }'\mu,\label{eq:theterm}
\end{align}
where we used the fact that  $\bm 1_{\kappa }'(X_0'X_0) \bm 1_{\kappa }\leq\lambda_{max}(X_0'X_0)s\leq ns(\log p)^d$ under Assumption \ref{ass:design}. 
Next, recall the definition of   $\kappa^*=\arg\max\limits_{\kappa\in\Xi} J_\kappa$ which is  a random variable in $Y$.
Now, we inspect the occurrence of an event $\{\bm 1_{\kappa^*} =\mathrm{sign}(\mu)\}$. 
We can rewrite $J_\kappa$ as
$$ 
 J_\kappa=\e^{\mu'(X_0'X_0)\mu/2} \int_{\mathcal O_\kappa}\e^{-(\beta-\mu)'(X_0'X_0)(\beta-\mu)/2-\lambda\|\beta\|_1}\d\beta.
$$ 
Since the function $\e^{-\lambda|\beta|}$ is symmetrical around the origin, its integral for each orthant $\mathcal O_{\kappa}$ is the same and equals $1/\lambda^s$. The orthant intergals change after reweighting by the Gaussian kernel $\e^{-(\beta-\mu)'(X_0'X_0)(\beta-\mu)/2}$. Filtering the Gaussian "likelihood" $N(\mu,(X_0'X_0)^{-1})$ through the Laplace prior has the effect of squashing the Gaussian distribution towards  (not across) coordinate axes and, depending on the magnitude of $\lambda$, creating ridgelines at the coordinate axes.
In other words, multiplying the Gaussian likelihood by a Laplace density does not change the ordering of Gaussian orthant probabilities.
The orthant $\mathcal O_{\kappa^*}$ which has the highest Gaussian orthant probability for the distribution $N(\mu,(X_0'X_0)^{-1})$ will also be the one for which $J_\kappa$ is the largest.
In addition, the orthant $\mathcal O_{\kappa^*}$ will very often be the one containing the mode $\mu$.
This will {\em always} be the case when $X_0'X_0=I_s$. It is reasonable to expect that when the correlation among the columns in $X_0$ is not too strong and/or when the signal $\beta_0$ is strong enough, this event will happen with overwhelmingly large probability. Below, we conclude that  Assumption \ref{ass:beta_min} and \ref{ass:design} are sufficient conditions for this to happen.

Denote  the ellipse $
\mathcal E_{\mu}(\chi^2_{s,1/2})=\{\beta\in\R^s:(\beta-\mu)'X_0'X_0(\beta-\mu)\leq \chi^2_{s,1/2}\}$ where $\chi^2_{s,1/2}$ is the median  of the Chi-squared distribution with $s$ degrees of freedom. This ellipse  contains $1/2$ of the Gaussian mass $N(\mu,(X_0'X_0)^{-1})$. If $\mathcal E_{\mu}(\chi^2_{s,1/2})\cap\mathcal O_\kappa=\mathcal E_{\mu}(\chi^2_{s,1/2})$, then  orthant  $\mathcal O_{\kappa}$ has the largest orthant probability and thereby also the integral $J_\kappa$, i.e. $\kappa=\kappa^*$. 
This happens, for instance, when 
\begin{equation}
|\mu_{j}|>c\equiv 2\sqrt{\chi^2_{s,1/2}/\lambda_{min}(X_0'X_0)}\label{eq:c}
\end{equation}
because  the axes of the ellipse are of the  length $c_j=2 \sqrt{\wt\lambda_j\times \chi^2_{s,1/2}}$, where $\wt\lambda_j$ is the $j^{th}$ eigenvalue   of $(X_0'X_0)^{-1}$. This means that 
\begin{equation}
\{|\mu_j|>c \}\subset \{sign(\mu_j)=1_{\kappa^*  j}\}\quad\forall j\in \{1,\dots, s\}\label{eq:embed}.
\end{equation}
 On the other hand, when $\mathcal E_{\mu}(\chi^2_{s,1/2})$  crosses the $j^{th}$ coordinate axis, then $sign(\mu_j)$ may or may not correspond to $1_{\kappa^*  j}$. 

 Next, we slice $\R^n$ into $2^s$ mutually exclusive sets depending on whether or not $|\mu_j|\geq c $.    Define by $I_m$ the set of all subsets  of $\{1,\dots,s \}$ of size $m$. 
 Then we have
$$
\R^n=\bigcup _{m=0}^s\bigcup _{I\in I_m }A(I),
$$
where  $A(I)=\{Y: |\mu_j|> c \,\,\text{for}\,\, j\in I \,\,\text{and}\,\, |\mu_j|\leq c \,\,\text{for}\,\,j\notin I\}$.
Due to \eqref{eq:embed},  for each $I\in I_m$ we have on the event $ \{Y\in A_{\kappa^* }\}\cap A(I)$  
$$
\bm 1_{\kappa^* }'\mu\geq  \sum_{j\in I }|\mu_j| - \sum_{j\notin I }|\mu_j|> \sum_{j\in I }|\beta_{0i}|-\sum_{j\in I }|\varepsilon^*_{i}|-  \sum_{j\notin I }c,
$$
where we have used the fact that $|\mu_j|=|\beta_{0j}+\varepsilon_j^*|>|\beta_{0j}|-|\varepsilon_j^*|$ for $\varepsilon^*=(X_0'X_0)^{-1}X_0'\varepsilon$.
Going back to \eqref{eq:theterm}, the term
$$
E_{Y\sim N(X\beta_0, I)}\sum_{\kappa^*\in\,\Xi}\mathbb I(Y\in A_{\kappa^*})  \bm 1_{\kappa^* }'\mu=  E_{Y\sim N(X\beta_0, I)}\sum_{m=0}^s\sum_{I\in I_m} \sum_{\kappa^*\in\,\Xi}\mathbb I(Y\in A_{\kappa^*}\cap A(I))  \bm 1_{\kappa^* }'\mu 
$$ can be lower-bounded by
\begin{equation}
 \sum_{m=0}^s\sum_{I\in I_m}\sum_{\kappa^*\in\Xi}P[Y\in A_{\kappa^*}\cap A(I)]\left( \sum_{j\in I }|\beta_{0j}| - \sum_{i\notin I }c\right)
-E\|\varepsilon^*\|_1> \sum_{j=1}^s |\beta_{0i}| P(|\mu_j|>c) -s\, c- E\|\varepsilon^*\|_1.
\end{equation}
Each $|\varepsilon_j^*|$ has a folded normal distribution with an expectation $\sqrt{2\sigma_j^2/\pi}$ where 
$\sigma_j^2$ is the $j^{th}$ diagonal element of 
$(X_0'X_0)^{-1}$ and satisfies $\sigma_j^2\leq \lambda_{max}[(X_0'X_0)^{-1}]=1/\lambda_{min}(X_0'X_0)$. This yields
$$
- E_{Y\sim N(X\beta_0, I)}\sum_{\kappa^*\in\Xi}\mathbb I(Y\in A_{\kappa^*})  \bm 1_{\kappa^* }'\mu\leq s\, c+ s\sqrt{2/(\pi\lambda_{min}(X_0'X_0))}-\|\beta_0\|_1+\sum_{j=1}^s |\beta_{0i}| P(|\mu_j|\leq c).
$$
  Now, we inspect  $P(|\mu_j|\leq c)$.  Because $|\mu_j|>|\beta_{0j}+\varepsilon^*_j|>|\beta_{0j}|-|\varepsilon^*_j|$ we have from Assumption \ref{ass:beta_min}  and  \ref{ass:design} that 
 $|\beta_{0j}|>\beta_{min}>c/(1-b)$ and thereby $|\beta_{0j}|-c>b|\beta_{0j}|$. Then   using a Gaussian tail bound we obtain
$$
P(|\mu_j|\leq c)\leq P(|\varepsilon^*_j|\geq |\beta_{0j}|-c)\leq 2\e^{-b^2\beta_{0j} ^2/(2\sigma_j^2)}.
$$
Then because $|x|\exp(-x^2/2)\leq 1$ we have
$$
|\beta_{0j}|P(|\mu_j|\leq c)\leq  |\beta_{0j}|\e^{-b^2\beta_{0j} ^2/(2\sigma_j^2)}\leq  \sigma_j/b \leq \frac{1}{b\sqrt{\lambda_{min}(X_0'X_0)}}.
$$

It is known \cite{johnson_kotz} that  $s-1<\chi^2_{s,1/2}< s$
which, from our assumptions, yields   for $M_0=M/[\wt\psi(S_0)^2\phi(S_0)]$ and $s\geq 2$
$$
\frac{1}{\sqrt{\lambda_{min}(X_0'X_0)}}<\frac{M_0(1-b)}{2\sqrt{\chi^2_{s,1/2}}}\sqrt{s/n\log p}<\frac{M_0(1-b)}{2}\sqrt{1/n\log p }.
$$
 A similar bound could also be obtained for $s=1$ using the approximation $\chi^2_{s,1/2}\approx s(1-2/(9s))^3$.
 This implies (using the definition of $c$ in \eqref{eq:c}) 
 \begin{align*}
 - E_{Y\sim N(X\beta_0, I)}\sum_{\kappa^*\in\Xi}\mathbb I(Y\in A_{\kappa^*})  \bm 1_{\kappa^* }'\mu&\leq s\left[ 2\sqrt{s}+\sqrt{2/\pi}+1/b\right]\frac{M_0(1-b)}{2}\sqrt{1/n\log p }-\|\beta_0\|_1\\
 &\leq C_0s\sqrt{s/n\log p}-\|\beta_0\|_1.
 \end{align*}
 Together with \eqref{eq:theterm} and \eqref{eq:Lambda}, this concludes the Lemma.

 \section{Proof of Theorem \ref{thm:final}}\label{sec:proof_thm:final}

We first focus on the set $\mathcal D_n$ from Theorem \ref{thm:consist}. Choosing $\lambda=\sqrt{n}/p$ (as in Example 9 in \cite{castillo_vdv2}) the assumptions in Theorem \ref{thm:consist} are satisfied for $a=1$ and  $A_4>2$ because $s\lambda\sqrt{\log p}/\sqrt{n}\leq n/p\sqrt{\log p}=o(1)$ where $s=|S_0|$. The set $\mathcal D_n$ is defined as an intersection of the set (6.12) in \cite{castillo_vdv2} and the set from the proof of Theorem 5 in \cite{castillo_vdv2}.
Define $\mathcal B=\{\beta\in\R^p:S_{\beta}= S_0\}$. 
Due to Pinsker's inequality we note that
\begin{equation}
\rho^{TV}_{n,m}(\beta_0, \hat p)^2 \leq 4 \times P(\mathcal D_n)+ 1/2\times \E_{Y\C\beta_0}\mathbb{I}(Y\in \mathcal D_n) KL(\pi(\wt Y\C \beta_0),\hat p(\wt Y\C Y)).\label{eq:pinsker}
\end{equation}
The second summand is the typical KL distance for a set of largely probable $Y$ and can be written as 
$$
1/2\times \E_{Y\C\beta_0}\E_{\wt Y\C\beta_0}\mathbb{I}(Y\in \mathcal D_n) \log \frac{\Lambda_{n,\beta_0,\pi}(Y,X)}{ \Lambda_{n+m,\beta_0,\pi}(Z,\bar X)}.
$$
Under the Spike-and-Slab prior with a Laplace slab, we can rewrite this term as follows
\begin{equation}
\E_{Y\C\beta_0}\mathbb{I}(Y\in \mathcal D_n) \E_{\wt Y\C\beta_0}\log \frac{ \int_{\mathcal B} \Delta_{n,\beta,\beta_0}(Y,X)\pi(\beta)\d\beta}{\int \Delta_{n+m,\beta,\beta_0}(Z,\bar X)\pi(\beta)\d\beta}+
\E_{Y\C\beta_0}\mathbb{I}(Y\in \mathcal D_n)\log \frac{1}{P(\mathcal B \C Y)}.\label{eq:two_terms}
\end{equation}
Next, we  write (using  Jensen's inequality $E\log X\leq \log E X$) using Theorem \ref{thm:consist}
\begin{align*}
\E_{Y\C\beta_0}\mathbb{I}(Y\in \mathcal D_n)\log \frac{1}{P(\mathcal B \C Y)}&=P(\mathcal D_n)+\E_{Y\C\beta_0}\mathbb{I}(Y\in \mathcal D_n)\frac{P(\mathcal B^c \C Y)}{1-P(\mathcal B^c \C Y)}\\
&<P(Y\in\mathcal D_n)(1+1/(p^{a_1}-1)). 
\end{align*}
The first term in \eqref{eq:two_terms}  can be upper bounded by the  KL risk  (restricted to $\mathcal D_n$) obtained under the oracle prior $\pi_{S_0,\lambda}(\beta)$
$$
 \rho^O_{n,m}(\beta_0, \hat p)\equiv \E_{Y\C\beta_0}\mathbb{I}(Y\in \mathcal D_n)\E_{\wt Y\C\beta_0}\log \frac{ \int \Delta_{n,\beta,\beta_0}(Y,X)\pi_{S_0 ,\lambda}(\beta)\d\beta}{ 
 \int \Delta_{n+m,\beta,\beta_0}(Z,\bar X)\pi_{S_0,\lambda}(\beta)\d\beta}
$$
The term $ \rho^O_{n,m}(\beta_0, \hat p)$ integrates over the high-probability $Y$'s inside $\mathcal D_n$.
For the log-numerator, we apply Lemma \ref{lemma:ub} which presents an upper bound for the (normalized) marginal likelihood under the Laplace prior.
From the proof of Lemma \ref{lemma:ub} it can be seen that  restricting the integration $\mathcal D_n$  {\em does not} increase the upper bound on the entire marginal likelihood. We can thus apply this upper bound when confined to $\mathcal D_n$.
For the log-denominator, we apply the lower bound Lemma \ref{lemma:lb} which holds uniformly (and almost surely) for all $Y$.
Altogether, this yields
\begin{align*}
\rho^O_{n,m}(\beta_0, \hat p)\leq&   \frac{s}{2}\left[1+2\wt C\lambda^2\, n\log^{d}p + C_0\sqrt{s/n \log p}+\log\left(\frac{8\pi}{n}\right)\right] \\
&+1/2+\lambda/(n+m)+s\log[(n+m)/\lambda]+\log s!. 
\end{align*}
With $\lambda=\sqrt{n}/p$ and  because $n/p\lesssim 1/\sqrt{\log (p)}$ we have $\lambda^2n\log^{d}p\lesssim \log^{d-1 } p$ we have
$$
\rho^O_{n,m}(\beta_0, \hat p)\lesssim s[\log^{d-1\vee 1} p\vee \log(1+m/n)]
$$
Combined with \eqref{eq:pinsker} we conclude that $\rho^{TV}_{n,m}(\beta_0, \hat p)^2\lesssim \eta_n$. Moreover, throughout the proof we showed that uniformly on $\mathcal D_n$, the KL divergence can be bounded by a multiple of $\eta_n$. The probability of the KL divergence exceeding a multiple of $\eta_n$ is thus smaller than the probability of the complement of set $\mathcal D_n$.



\end{document}